\newtheorem{thm}{Theorem}
\newtheorem{lem}[thm]{Lemma}
\newtheorem{cor}[thm]{Corollary}
\newtheorem{assump}[thm]{Assumption}
\newtheorem{prop}[thm]{Proposition}
\newcommand{\HDivD}{H(\mathrm{div};\Omega)}
\newcommand{\HCurlD}{H(\mathbf{curl};\Omega)}
\newcommand{\HDiv}{H(\mathrm{div})}
\newcommand{\HCurl}{H(\mathbf{curl})}
\newcommand{\DIV}{\mathrm{div}\,}
\newcommand{\CURL}{\mathbf{curl}\,}
\newcommand{\ND}{\mathcal{ND}_{h}}
\newcommand{\RT}{\mathcal{RT}_{h}}
\newcommand{\NDH}{\mathcal{ND}_{H}}
\newcommand{\RTH}{\mathcal{RT}_{H}}
\newcommand{\Sh}{\mathcal{S}_{h}}
\newcommand{\bg}{\bm{g}}
\newcommand{\bn}{\bm{n}}
\newcommand{\bp}{\bm{p}}
\newcommand{\bq}{\bm{q}}
\newcommand{\br}{\bm{r}}
\newcommand{\bt}{\bm{t}}
\newcommand{\bu}{\bm{u}}
\newcommand{\bv}{\bm{v}}
\newcommand{\bw}{\bm{w}}
\newcommand{\bx}{\bm{x}}
\newcommand{\cT}{\mathcal{T}}
\newcommand{\cF}{\mathcal{F}}
\newcommand{\cE}{\mathcal{E}}
\newcommand{\znorm}[2]{\left\| #1 \right\|_{#2}}
\newcommand{\mnorm}[3]{\left\| #1 \right\|_{#2,#3}}
\newcommand{\seminorm}[3]{\left \vert #1 \right \vert_{#2, #3}}
\def\ad#1{\begin{aligned}#1\end{aligned}}  \def\b#1{\bm{#1}} 
\def\a#1{\begin{align*}#1\end{align*}} \def\an#1{\begin{align}#1\end{align}} \def\t#1{\hbox{#1}}
\def\p#1{\begin{pmatrix}#1\end{pmatrix}} \def\bb#1{{\mathbf{#1}}}
  \numberwithin{equation}{section}
\numberwithin{table}{section} \numberwithin{figure}{section}
\title[OS for Vector Fields]{New Analysis of Overlapping Schwarz Methods for Vector Field Problems in Three Dimensions with Generally Shaped Domains. }
\author{Duk-Soon Oh}
\address{Department of Mathematics, Chungnam National University, Daejeon, 34134, Republic of Korea.}
\email{duksoon@cnu.ac.kr}
\author{Shangyou Zhang}
\address{Department of Mathematical Sciences, University of Delaware, Newark, DE 19716, USA}
\email{szhang@udel.edu}
\date{\today}
\begin{document}

\begin{abstract}
This paper introduces a novel approach to analyzing overlapping Schwarz methods for N\'{e}d\'{e}lec and Raviart--Thomas vector field problems. The theory is based on new regular stable decompositions for vector fields that are robust to the topology of the domain. Enhanced estimates for the condition numbers of the preconditioned linear systems are derived, dependent linearly on the relative overlap between the overlapping subdomains. Furthermore, we present the numerical experiments which support our theoretical results.
\end{abstract}

\keywords{overlapping Schwarz, $\HCurl$, N\'{e}d\'{e}lec finite element, $\HDiv$, Raviart--Thomas finite element}
    
\subjclass{65N55, 65N30, 65F08, 65F10}

\maketitle

\section{Introduction}\label{sec:introduction}
Let $\Omega$ be a bounded Lipschitz domain in $\mathds{R}^3$. We assume that the domain $\Omega$ is scaled such that the diameter of $\Omega$ is equal to one. We first introduce the Hilbert space $\HCurlD$ that consists of square integrable vector fields on the doamin $\Omega$ that have square integrable curls. 
We consider the following model problem posed in $\HCurlD$: Find $\bu \in \HCurlD$ such that 
\begin{equation}\label{eq:H(curl) model problem}
    a_c (\bu, \bv) = (\bm{f}, \bv) \quad \forall \bv \in \HCurlD, 
\end{equation}
where 
\begin{equation}\label{eq:H(curl) bilinear form}
    a_c (\bu, \bv) := \eta_c \left( \CURL \bu, \CURL \bv \right) + \left(\bu, \bv \right)
\end{equation}
and $\left( \cdot, \cdot \right)$ is the standard inner product on $\left(L^2(\Omega)\right)^3$ or $L^2(\Omega)$. We assume that the constant $\eta_c$ is positive and $\bm{f} \in \left(L^2(\Omega)\right)^3$. We also consider the Hilbert space $\HDivD$ in a similar manner, i.e., the space of square integrable vector fields on $\Omega$ with square integrable divergences. The corresponding model problem for a square integrable vector field $\bg$ on $\Omega$ is given as follows: Find $\bp \in \HDivD$ such that
\begin{equation}\label{eq:H(div) model problem}
    a_d (\bp, \bq) = (\bg, \bq) \quad \forall \bq \in \HDivD,
\end{equation}
where
\begin{equation}\label{eq:H(div) bilinear form}
    a_d (\bp, \bq) := \eta_d \left( \DIV \bp, \DIV \bq \right) + \left(\bp, \bq \right).
\end{equation}
Similarly, we assume that $\eta_d$ is a positive constant.

The first model problem \eqref{eq:H(curl) model problem} is originated from time-dependent Maxwell's equation, specifically the eddy-current problem; see \cite{MR2143847, MR1135758}. With a suitable time discretization, we have to solve the problem \eqref{eq:H(curl) model problem} in each time step. The second problem \eqref{eq:H(div) model problem} is developed for a first-order system of least-squares formulation for standard second order elliptic problems. For more detail, see \cite{MR1302685}. We also note that efficient numerical solution methods related to \eqref{eq:H(div) model problem} are required for solving problems from a pseudostress-velocity formulation for the Stokes equations and a sequential regularization method for the Navier-Stokes equations; see \cite{MR2642330, MR1451113}.

There have been a number of attempts to develop domain decomposition methods for solving \eqref{eq:H(curl) model problem} and \eqref{eq:H(div) model problem}; see \cite{MR4064357, MR3454359, MR3465088, MR3242973, MR2193972, MR2169505, MR1838270, MR1794350, MR2103209, MR3989859, MR3718365, LXZ:2023:OSSharp, MR2035002} for $\HCurl$ problems and \cite{MR3739213, MR1838270, MR3033012, MR3243012, MR3618847, MR1759911, LXZ:2023:OSSharp, MR1401938} for $\HDiv$ problems. Other types of fast solution methods, such as multigrid methods, have been also suggested. For more details, see \cite{MR1401938, MR1754719, MR1615161, MR1654571, MR2361899, MR2536904, MR3029843, MR2372343, MR3859167, MR3989901, Oh:2022:MGHCurl, Oh:MGHcurlNE,
 BF:2022:MGdeRham}.

The framework for analyzing domain decomposition methods based on overlapping subdomains has been introduced in \cite{MR1193013} as a subspace correction method. The two-level overlapping Schwarz methods for scalar elliptic problems have been introduced and analyzed in \cite{MR1273155}; see also \cite[Section 3]{MR2104179} and references therein for more detailed techniques. In \cite{MR1273155}, it is proved that the condition number of the preconditioned linear system is bounded above by a constant multiple of $(1 + H/\delta)$, where $H$ is the diameter of the subdmain and $\delta$ is the size of the overlap between subdomains. In fact, the bound is shown to be optimal; see \cite{MR1771050}.

The purpose of this paper is to analyze two-level overlapping Schwarz methods for discretized problems originated from \eqref{eq:H(curl) model problem} and \eqref{eq:H(div) model problem} using appropriate finite elements, i.e., N\'{e}d\'{e}lec and Raviart--Thomas elements of the lowest order. Such methods have been first introduced and analyzed in \cite{MR1838270, MR1794350}. The authors in \cite{MR1838270, MR1794350} provided an upper bound with the quadratic dependence on $(1 + H/\delta)$ with the assumption, i.e., the convexity of the domain. Later, the first author of this paper suggested an upper bound, $C (1 + \log(H/h))(1 + H / \delta)$, in \cite{MR3033012} with a nonstandard coarse space method assuming that subdomains are convex, where $h$ is the size of the mesh for the finite elements. Recently, an improved bound that depends linearly on $(1 + H/\delta)$ has been obtained in \cite{LXZ:2023:OSSharp} under the same assumption with \cite{MR1838270, MR1794350}. In this paper, we will provide a linear bound without any assumptions related to the topological properties of the domain and subdomains. We remark that the algorithms in \cite{MR1838270, MR1794350, LXZ:2023:OSSharp} and this paper are essentially the same but the technical details for the theories are different.

The important ingredients for analyzing numerical methods for solving problems posed in $\HCurl$ and $\HDiv$ are the Helmholtz type decompositions. This is because the structures of the kernels of the curl and the divergence operators are quite different from that of the gradient operator. In \cite{MR1838270, MR1794350}, discrete orthogonal Helmholtz decompositions based on those for continuous spaces have been suggested and used for analyzing overlapping Schwarz methods. Since the discrete range spaces are not included the continuous range spaces, the authors had to introduce semi-continuous spaces to handle the difficulty. To do so, the convexity of the domain was needed to use a suitable embedding. In \cite{LXZ:2023:OSSharp}, the authors considered the same type of decompositions so that the assumption for the domain has been inherited. In this paper, we consider a different type of regular decompositions. By introducing an additional term, an oscillation component, and abandoning the orthogonality, we have more robust decompositions. The approaches have been originally introduced in \cite{MR2361899} and extended later in \cite{MR4398318, HP:2019:RegularDecomp, MR4143285} based on the cochain projections constructed in \cite{MR3246803}. Our theories will be based on the decompositions suggested by Hiptmair and Pechstein; see \cite{MR4398318, HP:2019:RegularDecomp, MR4143285}.

The rest of the paper is organized as follows. In Section~\ref{sec:model problems and finite elements}, we introduce the discrete model problems and related finite elements. We describe overlapping Schwarz preconditioners in Section~\ref{sec:overlapping schwarz methods}. We next provide our theoretical results in Section~\ref{sec:condition number estimate}. Finally, the numerical examples to support our theories are presented in Section~\ref{sec:numerical experiments}.

\section{The discrete problems}\label{sec:model problems and finite elements}
We consider two triangulations, $\cT_H$ and $\cT_h$. First, we introduce $\cT_H$, a coarse triangulation of the domain $\Omega$, consisting of shape-regular and quasi-uniform tetrahedral elements with a maximum diameter $H$. Subsequently, $\cT_h$ is generated as a finer mesh, a refinement of the coarse mesh $\cT_H$. It is assumed that the restriction of $\cT_h$ to each individual coarse element is both shape-regular and quasi-uniform. 

We next introduce finite element spaces. The space of the lowest order tetrahedral N\'{e}d\'{e}lec finite elements associated with $\HCurlD$ and the triangulation $\cT_h$ is defined by
\[
    \ND :=  \left\{ \bu \,\, \vert \,\, \bu_{|K} \in N(K) , K \in \cT_h \,\, \mathrm{and} \,\, \bu \in \HCurlD \right\},
\]
where the shape function $N(K)$ is given by
\[
    N(K) := \bm{\alpha}_c + \bm{\beta}_c \times \bx
\]
for a tetrahedral element. Here, $\bm{\alpha}_c$ and $\bm{\beta}_c$ are constant vectors in $\mathds{R}^3$. The values of the two vectors $\bm{\alpha}_c$ and $\bm{\beta_c}$ can be determined by the average tangential components on the edges of $K$, i.e., 
\[ 
    \lambda_e^{\mathcal{ND}} (\bu) := \dfrac{1}{|e|} \int_e \bu \cdot \bt_e \, ds, \quad e \subset \partial K,
\] 
where $|e|$ is the length of the edge $e$ and $\bt_e$ is the unit tangential vector associated with $e$. We note that these values can be considered as the degrees of freedom. The interpolation operator $\Pi_h^{\mathcal{ND}}$ for a sufficiently smooth vector field $\bu$ in $\HCurlD$ onto $\ND$ is defined as follows:
\[ 
    \Pi_h^{\mathcal{ND}} \bu := \sum_{e \in \cE_h} \lambda_e^{\mathcal{ND}} (\bu) \, \Phi_e^{\mathcal{ND}},
\] 
where $\cE_h$ is the set of interior edges of $\cT_h$ and $\Phi_e^{\mathcal{ND}}$ is the standard basis function linked with $e$.

We next consider the lowest order tetrahedral Raviart--Thomas finite element space corresponding to the space $\HDivD$ that is defined by
\[ 
    \RT :=  \left\{ \bp \,\, \vert \,\, \bp_{|K} \in R(K) , K \in \cT_h \,\, \mathrm{and} \,\, \bp \in \HDivD \right\}.
\] 
Here, the shape function $R(K)$ associated with the tetrahedral element $K$ is defined by 
\[ 
    R(K) := \bm{\alpha}_d + \beta_d \, \bx,
\] 
where $\bm{\alpha}_d$ is a constant vector in $\mathds{R}^3$ and $\beta_d$ is a scalar. The degrees of freedom related to an element $K$ are determined by the average values of the normal components over its faces, namely 
\[ 
    \lambda_f^{\mathcal{RT}} (\bp) := \dfrac{1}{|f|} \int_f \bp \cdot \bn_f \, ds, \quad f \subset \partial K.
\] 
Here, $|f|$ is the area of the face $f$ and $\bn_f$ is the unit normal vector corresponding to $f$. We note that $\bm{\alpha}_d$ and $\beta_d$ can be completely recovered by the degrees of freedom associated with the four faces of $K$. Let $\cF_h$ be the set of interior faces of $\cT_h$. Similarly, we can define the interpolation operator $\Pi_h^{\mathcal{RT}}$ associated with $\HDivD$. For a sufficiently smooth $\bu \in \HDivD$, the operator is defined by
\[ 
    \Pi_h^{\mathcal{RT}} \bu := \sum_{f \in \cF_h} \lambda_f^{\mathcal{RT}} (\bu) \, \Phi_f^{\mathcal{RT}}.
\] 
Here, $\Phi_f^{\mathcal{RT}}$ is the standard basis function corresponding to the face $f$. 

In addition, we need the piecewise linear space for our theories. Let $\Sh$ be the space of the continuous $P_1$ finite elements associated with $\cT_h$. We recall that the degrees of freedom are given by the function evaluations at the vertices. The corresponding interpolation operator for a sufficiently smooth function in $H^1(\Omega)$ is given by $\Pi_h^{\mathcal{S}}$. We also consider $\widetilde{\Pi}_h^{\mathcal{S}}$, the Scott-Zhang interpolation operator introduced in \cite{MR1011446}. We can also consider the interpolation operators for $\cT_H$ by replacing the subscript with $H$.

By restricting the model problems \eqref{eq:H(curl) model problem} and \eqref{eq:H(div) model problem} to the finite element spaces $\ND$ and $\RT$, respectively, we obtain the following discrete problems: Find $\bu_h \in \ND$ such that
\[ 
    a_c (\bu_h, \bv_h) = (\bm{f}, \bv_h) \quad \forall \bv_h \in \ND 
\] 
and
find $\bp_h \in \RT$ such that
\[ 
    a_d (\bp_h, \bq_h) = (\bg, \bq_h) \quad \forall \bq_h \in \RT. 
\] 
We also define the operators $A_c : \ND \rightarrow \ND$ and $A_d : \RT \rightarrow \RT$ as follows:
\[ 
    \left(A_c \bu_h, \bv_h \right) = a_c (\bu_h, \bv_h) \quad \forall \bu_h, \bv_h \in \ND
\] 
and
\[ 
    \left(A_d \bp_h, \bq_h \right) = a_d (\bp_h, \bq_h) \quad \forall \bp_h, \bq_h \in \RT.
\] 
\section{Overlapping Schwarz methods}\label{sec:overlapping schwarz methods}
We decompose the domain $\Omega$ into $N$ nonoverlapping subdomains $\Omega_i$, a union of a few elements in $\cT_H$. We assume that the number of coarse elements contained in each subdomain is uniformly bounded. The parameter $H_i$ is defined by the diameter of the subdomain $\Omega_i$. We now consider an overlapping subdomain $\Omega_i'$ originated from the nonoverlapping subdomain $\Omega_i$ by extending layers of fine elements, i.e., $\Omega_i'$ containing $\Omega_i$ is a union of fine elements. In addition, we consider the assumptions introduced in \cite[Assumptions 3.1, 3.2, and 3.5]{MR2104179}.
\begin{assump}\label{assump:delta}
    For $i = 1, 2, \cdots, N$, there exists $\delta_i > 0$, such that, if $\bx$ belongs to $\Omega_i'$, then 
    \begin{equation}
        \mathrm{dist}(\bx, \partial \Omega_j' \setminus \partial \Omega) \ge \delta_i,
    \end{equation}
    for a suitable $j = j(\bx)$, possibly equal to $i$, with $\bx \in \Omega_j'$.
\end{assump}

\begin{assump}\label{assump:finite covering}
    The partition $\left\{ \Omega_i' \right\}$ can be colored using at most $N_0$ colors, in such a way that subregions with the same color are disjoint.
\end{assump}

\begin{assump}\label{assump:H}
There exists a constant $C$ independent of $\cT_H$ and the subdomain $\Omega_i'$, such that, for $i = 1, 2, \cdots, N$,
\begin{equation}
    H_K \le C H_i,
\end{equation}
for any $K \in \cT_H$, such that $K \cap \Omega_i' \neq \emptyset$. Here, $H_K$ is the diameter of the coarse element $K$.
\end{assump}

In our theories, a partition of unity technique plays an essential role. To do so, we construct the set $\left\{ \theta_i \right\}$, consisting of piecewise linear functions associated with the overlapping subdomain, which has the following properties:
\begin{equation}\label{eq:property theta_i}
    \begin{aligned}
        & 0 \le \theta_i \le 1, \\
        & \mbox{supp } (\theta_i) \subset \overline{\Omega_i'}, \\
        & \sum_{i=1}^N \theta_i \equiv 1, \quad \bx \in \Omega, \\
        & \znorm{\theta_i}{\infty} \le \dfrac{C}{\delta_i},
    \end{aligned}
\end{equation} 
where $C$ is a constant independent of the $\delta_i$ and the $H_i$ and $\znorm{\cdot}{\infty}$ is the standard $L^{\infty}-$norm.
For more details, see \cite[Lemma 3.4]{MR2104179}.

We now construct our preconditioners based on overlapping Schwarz methods. We first consider the coarse component. The coarse operators $A_{c}^{(0)}$ and $A_{d}^{(0)}$ related to the coarse problems are defined as follows:
\[ 
    \left(A_{c}^{(0)} \bu_H, \bv_H \right) = a_c (\bu_H, \bv_H) \quad \forall \bu_H, \bv_H \in \NDH
\] 
and
\[ 
    \left(A_{d}^{(0)} \bp_H, \bq_H \right) = a_d (\bp_H, \bq_H) \quad \forall \bp_H, \bq_H \in \RTH.
\] 
The components of the operator $R_{c}^{(0)}$ which maps a vector field in $\ND$ to $\NDH$ consist of the coefficients obtained through the interpolation of the standard basis functions associated with $\NDH$ onto the mesh $\cT_h$. 
We remark that ${R_{c}^{(0)}}^T : \NDH \rightarrow \ND$ is the natural injection since the finite element spaces are nested. 
In a similar way, we can define the operator $R_{d}^{(0)} : \RT \rightarrow \RTH$ associated with the Raviart--Thomas spaces.  

Regarding the local components, let us define the restriction operators 
  $R_{c}^{(i)} : \ND \rightarrow \ND^{(i)}$  in such a way that ${R_{c}^{(i)}}^T : \ND^{(i)} \rightarrow \ND$ are natural injections. Here, $\ND^{(i)}$ is the subspace of $\ND$ spanned by the basis functions corresponding to the fine edges in $\Omega_i'$.
Similarly, the construction for $R_{d}^{(i)} : \RT \rightarrow \RT^{(i)}$ is straightforward, where the local space $\RT^{(i)}$ is defined in a similar way. 
Then, the local operators $A_{c}^{(i)}$ and $A_{d}^{(i)}$ can be defined as follows: 
\[
    A_{\xi}^{(i)} = R_{\xi}^{(i)} A_{\xi} {R_{\xi}^{(i)}}^T,
\]
where $\xi$ corresponds $c$ or $d$. We note that $A_{\xi}^{(i)}$ is just a principal minor of $A_{\xi}$.

We can now construct the preconditioners and the resulting preconditioned linear operator has the following form:
\begin{equation}\label{eq:preconditioned linear operator}
    M_{\xi}^{-1}A_{\xi} = \sum_{i=0}^N {R_{\xi}^{(i)}}^T {A_{\xi}^{(i)}}^{-1} R_{\xi}^{(i)} A_{\xi},
\end{equation}
where $\xi$ corresponds $c$ or $d$.
\section{Condition number estimate}\label{sec:condition number estimate}
\subsection{Preliminaries}\label{subsec:preliminaries}
In this subsection, we will describe several preliminary results for our theories. 

We first consider standard Sobolev spaces and their norms and semi-norms. For any $\mathcal{D} \subset \Omega$, let us denote by $\mnorm{\, \cdot\, }{s}{\mathcal{D}}$ and $\seminorm{\, \cdot\, }{s}{\mathcal{D}}$ the norm and the semi-norm of the Sobolev space $H^s(\mathcal{D})$, respectively. Provided that $\mathcal{D} = \Omega$, we will omit the subscript $\Omega$ for convenience. If there is no explicit confusion, the same norm and semi-norm notations will be used for $\left(H^s(\mathcal{D})\right)^3$.

We next define the operator $Q_H^{\mathcal{ND}} : \left(L^2(\Omega)\right)^3 \rightarrow \NDH$ as the $L^2-$projection onto $\NDH$. Similarly, we define the $L^2-$projection operator $Q_H^{\mathcal{RT}} : \left(L^2(\Omega)\right)^3 \longrightarrow \RTH$. We then have the following lemma in \cite[Chapter 10]{MR2104179}:
\begin{lem}\label{lem:L2 projection}
    For $\bu, \bp \in \left(H^1(\Omega)\right)^3$, the following estimates hold:
    \[ 
        \begin{aligned}
            \znorm{\CURL \left(Q_H^{\mathcal{ND}} \bu \right)}{0} & \le C \left \vert \bu \right \vert_1, \\
            \znorm{\bu -  Q_H^{\mathcal{ND}} \bu}{0} & \le C H \left \vert \bu \right \vert_1, \\
            \znorm{\DIV \left(Q_H^{\mathcal{RT}} \bp \right)}{0} & \le C \left \vert \bp \right \vert_1, \\
            \znorm{\bp -  Q_H^{\mathcal{RT}} \bp}{0} & \le C H \left \vert \bp \right \vert_1, \\
        \end{aligned}
    \] 
    with constants independent of $\bu$, $\bp$, and $H$.
\end{lem}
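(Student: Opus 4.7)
The plan is to prove all four estimates by introducing smoothed, $L^2$-stable, commuting quasi-interpolants $\widetilde{\Pi}_H^{\mathcal{ND}}:(L^2(\Omega))^3 \to \NDH$ and $\widetilde{\Pi}_H^{\mathcal{RT}}:(L^2(\Omega))^3 \to \RTH$ of the Sch\"oberl/Christiansen--Winther type (compare the cochain projections constructed in \cite{MR3246803}). These operators are bounded in $L^2$ uniformly in $H$, satisfy the optimal first-order approximation
\[
\|\bu - \widetilde{\Pi}_H^{\mathcal{ND}}\bu\|_0 \le CH|\bu|_1, \qquad \|\bp - \widetilde{\Pi}_H^{\mathcal{RT}}\bp\|_0 \le CH|\bp|_1,
\]
and commute with the de Rham differentials, so that
\[
\CURL\,\widetilde{\Pi}_H^{\mathcal{ND}}\bu = \widetilde{\Pi}_H^{\mathcal{RT}}(\CURL\bu), \qquad \DIV\,\widetilde{\Pi}_H^{\mathcal{RT}}\bp = \Pi_H^0(\DIV\bp),
\]
where $\Pi_H^0$ denotes the $L^2$-projection onto the space of piecewise constants on $\cT_H$.

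The two $L^2$-error bounds are immediate from the best-approximation property of the $L^2$-projection: since $Q_H^{\mathcal{ND}}$ and $Q_H^{\mathcal{RT}}$ minimise the $L^2$-error over $\NDH$ and $\RTH$ respectively, using $\widetilde{\Pi}_H^{\mathcal{ND}}\bu$ and $\widetilde{\Pi}_H^{\mathcal{RT}}\bp$ as comparison elements yields the $CH|\bu|_1$ and $CH|\bp|_1$ bounds.

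For the curl estimate I would split
\[
\CURL\,Q_H^{\mathcal{ND}}\bu = \CURL\bigl(Q_H^{\mathcal{ND}}\bu - \widetilde{\Pi}_H^{\mathcal{ND}}\bu\bigr) + \CURL\,\widetilde{\Pi}_H^{\mathcal{ND}}\bu.
\]
The second summand equals $\widetilde{\Pi}_H^{\mathcal{RT}}(\CURL\bu)$ by the commuting relation, and the $L^2$-stability of $\widetilde{\Pi}_H^{\mathcal{RT}}$ bounds its $L^2$ norm by $C\|\CURL\bu\|_0 \le C|\bu|_1$. The first summand is the curl of an element of $\NDH$, so the standard inverse inequality on the shape-regular, quasi-uniform coarse mesh gives
\[
\|\CURL(Q_H^{\mathcal{ND}}\bu - \widetilde{\Pi}_H^{\mathcal{ND}}\bu)\|_0 \le CH^{-1}\|Q_H^{\mathcal{ND}}\bu - \widetilde{\Pi}_H^{\mathcal{ND}}\bu\|_0,
\]
and inserting $\bu$ on the right via the triangle inequality, together with the two $O(H)$ approximation bounds just established, produces $C|\bu|_1$. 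The divergence estimate follows by exactly the same template with $\widetilde{\Pi}_H^{\mathcal{ND}}$ replaced by $\widetilde{\Pi}_H^{\mathcal{RT}}$, the commuting relation $\DIV\widetilde{\Pi}_H^{\mathcal{RT}}\bp = \Pi_H^0(\DIV\bp)$, and the trivial $L^2$-stability of $\Pi_H^0$.

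The principal obstacle is not in any of the individual manipulations above but in securing a single quasi-interpolant with all three properties (commutation, $L^2$-boundedness, optimal $H^1$-approximation) simultaneously, since the canonical degree-of-freedom N\'ed\'elec and Raviart--Thomas interpolants are not defined on $(L^2(\Omega))^3$ and so cannot play this role directly. Once the smoothed cochain projection is invoked as a black box, every remaining step is an elementary best-approximation or inverse-inequality argument, and the $\HCurl$ and $\HDiv$ estimates come out in parallel.
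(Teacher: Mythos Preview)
The paper does not supply its own proof of this lemma; it simply records the estimates and attributes them to \cite[Chapter 10]{MR2104179}. So there is no in-paper argument to compare against, and your proposal is in effect a self-contained substitute for the cited result.

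Your argument is correct. The best-approximation step for the two $O(H)$ bounds is immediate, and the curl/div stability via the splitting ``commuting quasi-interpolant $+$ inverse inequality on the discrete remainder'' is a clean and standard route. One minor remark: the reference \cite{MR2104179} predates the Falk--Winther cochain projections \cite{MR3246803} that you invoke, so the original proof there necessarily uses older tools (quasi-interpolants of Cl\'ement/Scott--Zhang type adapted to edge and face elements, or averaging arguments built on local $L^2$-projections onto piecewise constants). Your use of the modern smoothed projections is arguably tidier, since commutation and $L^2$-stability come packaged together, but the logical skeleton---best approximation for the error, inverse inequality plus a stable comparison operator for the differential---is the same in both cases.
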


We also denote by $Q_{H, K}^0 : \left(L^2(K)\right)^3 \rightarrow \left(P_0(K)\right)^3$, where $K \in \cT_H$ and $P_0(K)$ is the space of constants, a local $L^2-$projection operator. Then, we have
\begin{lem}\label{lem:L2 projection K}
    Let $K \in \cT_H$. Then, for $\bu \in \left(H^1(K)\right)^3$, we have
    \[ 
        \mnorm{\bu - Q_{H, K}^0 \bu}{0}{K} \le C H_K \seminorm{\bu}{1}{K},
    \] 
    where $H_K$ is the diameter of $K$.
\end{lem}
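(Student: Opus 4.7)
My plan is to establish this as a standard Bramble--Hilbert / Poincar\'e--Wirtinger estimate. The first step is to identify the operator $Q_{H,K}^0$ concretely: since $\left(P_0(K)\right)^3$ is the space of constant vector fields, the $L^2$-projection onto it is simply the component-wise mean value,
\[
    Q_{H,K}^0 \bu = \frac{1}{|K|} \int_K \bu \, d\bx.
\]
In particular, $Q_{H,K}^0$ reproduces constants, so it suffices to bound $\mnorm{\bu - Q_{H,K}^0 \bu}{0}{K}$ by the $H^1$-seminorm of $\bu$ on $K$.

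Next, I would pass to a reference tetrahedron $\hat{K}$ of unit diameter via an affine map $F_K : \hat{K} \to K$ and apply the Poincar\'e--Wirtinger inequality component-wise on $\hat{K}$:
\[
    \mnorm{\hat{\bu} - Q_{\hat K}^0 \hat{\bu}}{0}{\hat K} \le \hat{C} \, \seminorm{\hat{\bu}}{1}{\hat K},
\]
with a constant $\hat{C}$ depending only on $\hat{K}$. The projection commutes with the pullback (averaging commutes with the affine change of variables because the Jacobian is constant), so we may then scale back to $K$. Using shape-regularity of $\cT_H$ (which gives $\|DF_K\| \lesssim H_K$, $\|DF_K^{-1}\| \lesssim H_K^{-1}$, and $|\det DF_K| \sim H_K^3$), the standard scaling estimates yield
\[
    \mnorm{\bu - Q_{H,K}^0 \bu}{0}{K} \lesssim H_K \seminorm{\bu}{1}{K},
\]
with the constant depending only on the shape-regularity parameter.

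There is no real obstacle here; the only point that requires a line of care is verifying that averaging commutes with the affine pullback, so that the projection error on $K$ pulls back to the projection error on $\hat{K}$ and the reference Poincar\'e--Wirtinger inequality applies verbatim. The estimate is independent of the particular $K$ because all reference mappings arise from a single shape-regularity class.
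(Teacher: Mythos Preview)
Your argument is correct and is exactly the standard proof of this Poincar\'e--Wirtinger/Bramble--Hilbert estimate. The paper states this lemma without proof, treating it as a well-known approximation result for the local $L^2$-projection onto constants; your derivation via the reference-element scaling argument is precisely what one would supply if a proof were required.
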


The following lemma describes the stability of the interpolation operators, stated in \cite[Chapter 10]{MR2104179}, for the functions obtained by the product of a piecewise linear function and a vector field:
\begin{lem}\label{lem:interpolation with theta_i}
    Let $\bu \in \ND$, $\bp \in \RT$, and $\theta_i$ be any continuous, piecewise linear function supported in the subdomain $\Omega_i'$. Then, we have the following estimates:
    \[ 
        \begin{aligned}
            \mnorm{\Pi_h^{\mathcal{ND}}\left(\theta_i \bu \right)}{0}{\Omega_i'} & \le C \mnorm{\theta_i \bu}{0}{\Omega_i'}, \\
            \mnorm{\CURL\left(\Pi_h^{\mathcal{ND}}\left(\theta_i \bu \right)\right)}{0}{\Omega_i'} & \le C \mnorm{\CURL\left(\theta_i \bu\right)}{0}{\Omega_i'}, \\
            \mnorm{\Pi_h^{\mathcal{RT}}\left(\theta_i \bp \right)}{0}{\Omega_i'} & \le C \mnorm{\theta_i \bp}{0}{\Omega_i'}, \\
            \mnorm{\DIV\left(\Pi_h^{\mathcal{RT}}\left(\theta_i \bp \right)\right)}{0}{\Omega_i'} & \le C \mnorm{\DIV\left(\theta_i \bp\right)}{0}{\Omega_i'}. \\
        \end{aligned}
    \] 
\end{lem}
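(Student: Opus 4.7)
The plan is to prove all four inequalities locally, tetrahedron by tetrahedron, using scaling to a reference element plus equivalence of norms on finite dimensional polynomial spaces, and then to derive the two differential (curl and div) bounds from the two algebraic ($L^2$) bounds via the commuting diagram properties of the canonical interpolants. Before starting I would note that $\theta_i\bu$ and $\theta_i\bp$ have just enough regularity for $\Pi_h^{\mathcal{ND}}$ and $\Pi_h^{\mathcal{RT}}$ to be defined: since $\theta_i$ is continuous and the tangential (resp.\ normal) trace of $\bu\in\ND$ (resp.\ $\bp\in\RT$) is continuous across edges (resp.\ faces), the edge integrals $\lambda_e^{\mathcal{ND}}(\theta_i\bu)$ and the face integrals $\lambda_f^{\mathcal{RT}}(\theta_i\bp)$ are well-defined.

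For the first estimate, fix $K\in\cT_h$ inside $\Omega_i'$. Then $\theta_i\bu|_K$ lies in the fixed finite dimensional space $(P_2(K))^3$. Pulling back to the reference tetrahedron $\widehat K$ via the covariant Piola transform associated with $\ND$, the interpolant transforms as a Nédélec field and the edge degrees of freedom are preserved, so it suffices to show
\[
\znorm{\widehat\Pi^{\mathcal{ND}}(\widehat\bw)}{0,\widehat K}\le C\znorm{\widehat\bw}{0,\widehat K}
\qquad\text{for all }\widehat\bw\in (P_2(\widehat K))^3.
\]
Each edge average $|\widehat\lambda_{\widehat e}(\widehat\bw)|$ is bounded by $\|\widehat\bw\cdot\widehat\bt\|_{L^\infty(\widehat e)}$, which by equivalence of norms on the finite dimensional image space is bounded by $\|\widehat\bw\|_{0,\widehat K}$; combined with $\|\widehat\Phi_{\widehat e}^{\mathcal{ND}}\|_{0,\widehat K}\le C$, this proves the reference inequality. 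Scaling back, the Piola factors on both sides cancel and we recover the inequality on $K$ with a constant depending only on the shape regularity; summing over $K\subset\Omega_i'$ yields the first estimate. The third estimate follows by an entirely parallel argument, now using the contravariant Piola transform for $\RT$ and face integrals of normal traces on $\widehat K$.

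For the curl bound, I would appeal to the commuting diagram
\[
\CURL\bigl(\Pi_h^{\mathcal{ND}}(\theta_i\bu)\bigr)=\Pi_h^{\mathcal{RT}}\bigl(\CURL(\theta_i\bu)\bigr),
\]
which one verifies face-by-face using Stokes' theorem (the face integral of $\bn_f\cdot\CURL\bw$ equals the edge circulation of $\bw\cdot\bt$, which is precisely the Nédélec degree of freedom); applicability requires the same trace regularity as above, which we have. Combined with the third estimate applied to $\CURL(\theta_i\bu)\in\RT$, this delivers the curl bound. For the divergence bound, I would use the analogous diagram
\[
\DIV\bigl(\Pi_h^{\mathcal{RT}}(\theta_i\bp)\bigr)=P_h^{0}\bigl(\DIV(\theta_i\bp)\bigr),
\]
where $P_h^0$ is the $L^2$ projection onto piecewise constants; since $P_h^0$ is an $L^2$ contraction, this immediately yields the fourth inequality.

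The routine part is the scaling-plus-equivalence-of-norms argument; what requires the most care is the bookkeeping of the Piola transforms, ensuring the geometric factors on the left- and right-hand sides of the element-wise bound truly cancel so that the constant is independent of $h$ and of $\theta_i$. The other delicate point is the rigorous justification of the commuting diagrams for the non-polynomial-in-$\ND$ argument $\theta_i\bu$, which boils down to confirming that the trace identities used in the derivation only need the tangential (resp.\ normal) continuity that $\theta_i\bu$ (resp.\ $\theta_i\bp$) actually possesses.
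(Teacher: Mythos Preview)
The paper does not supply its own proof of this lemma; it is quoted from \cite[Chapter 10]{MR2104179}. Your argument is essentially the standard one found there: elementwise scaling to a reference tetrahedron plus norm equivalence on a fixed polynomial space for the $L^2$ bounds, and the commuting-diagram identities $\CURL\Pi_h^{\mathcal{ND}}=\Pi_h^{\mathcal{RT}}\CURL$ and $\DIV\Pi_h^{\mathcal{RT}}=P_h^0\DIV$ for the differential bounds. So in spirit you match the cited source.

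One correction is needed. You write ``the third estimate applied to $\CURL(\theta_i\bu)\in\RT$'', but $\CURL(\theta_i\bu)$ is \emph{not} in $\RT$ in general: on each element it equals $\nabla\theta_i\times\bu+\theta_i\,\CURL\bu$, which is a full affine vector field, not of the restricted form $\bm{\alpha}_d+\beta_d\bx$. This does not break your proof, because your own scaling argument for the $\Pi_h^{\mathcal{RT}}$ bound only used that the argument lies in a fixed finite-dimensional polynomial space on the reference element (here $(P_1(\widehat K))^3$), not that it is a Raviart--Thomas field. You should therefore invoke that more general elementwise estimate directly rather than the third line of the lemma as stated. With that adjustment the argument is complete.
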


We finally introduce an estimate for piecewise $H^1$ functions on the layer around the subdomain $\Omega_i$. A similar estimate for $H^1$ functions is given in \cite[Lemma 3.10]{MR2104179} and extended for piecewise $H^1$ functions in \cite[Lemma 3.3]{LXZ:2023:OSSharp}. In \cite{LXZ:2023:OSSharp}, the author assumed that the subdomain $\Omega_i \in \cT_H$. We note that the estimate is equally valid with assumptions in the beginning of Section~\ref{sec:overlapping schwarz methods} together with Assumptions~\ref{assump:delta}, \ref{assump:finite covering} and \ref{assump:H}. Before we consider the estimate, we define $\Omega_{i, \delta}$ by
\[  
    \Omega_{i, \delta} = \displaystyle{\bigcup_{\substack{j \in I_i \\ j \neq i}} \Omega_i' \cap \Omega_j'},
\] 
where $I_i = \left\{ j : \Omega_i' \cap \Omega_j' \neq \emptyset \right\}$.

\begin{lem}\label{lem:est layer Omega_i}
    Let $\bu$ be a piecewise $H^1$ function, i.e., $\bu\vert_{K} \in \left(H^1(K)\right)^3$ on each $K \in \cT_H$. We then have
    \[ 
        \delta_i^{-2} \mnorm{\bu}{0}{\Omega_{i, \delta}}^2 \le C \sum_{j \in I_i} \sum_{\substack{K \in \cT_H, \\ K \subset \Omega_j}}\left[\left(1 + \dfrac{H_i}{\delta_i}\right)\seminorm{\bu}{1}{K}^2 + \dfrac{1}{\delta_i H_i} \mnorm{\bu}{0}{K}^2\right].
    \] 
\end{lem}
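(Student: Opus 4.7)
The plan is to reduce the global estimate to a local boundary-strip inequality on each coarse element $K$ and then sum. Since the nonoverlapping subdomains $\Omega_j$ are unions of coarse elements, and every coarse element $K$ with $K\cap\Omega_{i,\delta}\ne\emptyset$ must lie in some $\Omega_j$ with $j\in I_i$ (because $\Omega_{i,\delta}\subset\Omega_i'$, which sits in a $\delta_i$-collar of $\bigcup_{j\in I_i}\Omega_j$), I would begin with the piecewise decomposition
\[
\mnorm{\bu}{0}{\Omega_{i,\delta}}^2 \le \sum_{j\in I_i}\sum_{\substack{K\in\cT_H\\K\subset\Omega_j}} \mnorm{\bu}{0}{K\cap\Omega_{i,\delta}}^2,
\]
on each summand of which $\bu\vert_K\in (H^1(K))^3$, so that the piecewise $H^1$ regularity can be exploited element by element.

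The core analytic tool is the following strip inequality: for a shape-regular element $K$ of diameter $H_K$, a function $v\in H^1(K)$, and any set $S\subset K$ contained in the $\delta$-neighbourhood of a portion of $\partial K$ with $\delta\le H_K$,
\[
\mnorm{v}{0}{S}^2 \le C\bigl[(\delta/H_K)\mnorm{v}{0}{K}^2 + (\delta H_K+\delta^2)\seminorm{v}{1}{K}^2\bigr].
\]
I would prove this on the reference element $\hat K$ by combining the trace inequality $\mnorm{\hat v}{0}{\partial\hat K}^2\le C(\mnorm{\hat v}{0}{\hat K}^2+\seminorm{\hat v}{1}{\hat K}^2)$ with the pointwise representation $\hat v(\hat\bx)=\hat v(\hat\by)+\int_0^1\nabla\hat v(\hat\by+t(\hat\bx-\hat\by))\cdot(\hat\bx-\hat\by)\,dt$, where $\hat\by\in\partial\hat K$ is the closest point to $\hat\bx$; squaring, integrating over $\hat S$, and fibering along the normal direction yields $\mnorm{\hat v}{0}{\hat S}^2\le C(\hat\delta\mnorm{\hat v}{0}{\partial\hat K}^2+\hat\delta^2\seminorm{\hat v}{1}{\hat K}^2)$, and an affine rescaling $\bx=H_K\hat\bx$ produces the stated inequality.

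Applying the strip inequality with $S=K\cap\Omega_{i,\delta}$, $\delta=\delta_i$, and $H_K\simeq H_i$ (from Assumption~\ref{assump:H} together with shape-regularity and quasi-uniformity of $\cT_H$), and dividing by $\delta_i^2$ yields
\[
\delta_i^{-2}\mnorm{\bu}{0}{K\cap\Omega_{i,\delta}}^2 \le C\bigl[(1+H_i/\delta_i)\seminorm{\bu}{1}{K}^2 + (1/(\delta_i H_i))\mnorm{\bu}{0}{K}^2\bigr],
\]
which is exactly the summand in the claim; summing over $K\subset\Omega_j$ and $j\in I_i$ completes the proof. The main obstacle I anticipate is the geometric step: one must show that $K\cap\Omega_{i,\delta}$ really lies in a $\delta_i$-strip of $\partial K$ rather than occupying an interior subset of $K$. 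This rests on the construction of $\Omega_i'$ from $\Omega_i$ by appending layers of fine elements (so $\Omega_{i,\delta}$ is a collar of width $\lesssim\delta_i$ about $\partial\Omega_i$, a union of coarse-element faces), on Assumption~\ref{assump:delta}, and on the standard overlap regime $\delta_i\lesssim H_i\simeq H_K$ which prevents $K$ from being engulfed by the collar. Once this geometric claim is secured, the remaining work is routine scaling and bookkeeping, exactly parallel to the single-$H^1$ version in Toselli--Widlund.
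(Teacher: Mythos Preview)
The paper does not give its own proof of this lemma; it simply cites \cite[Lemma~3.10]{MR2104179} for the $H^1$ case and \cite[Lemma~3.3]{LXZ:2023:OSSharp} for the piecewise-$H^1$ extension, and asserts that the argument carries over under the present subdomain assumptions. Your reconstruction---localize to each coarse element $K$, apply the boundary-strip inequality $\|v\|_{0,S}^2\le C\bigl[(\delta/H_K)\|v\|_{0,K}^2+\delta H_K|v|_{1,K}^2\bigr]$ obtained by trace-plus-scaling on the reference element, divide by $\delta_i^2$, and sum over $K\subset\Omega_j$, $j\in I_i$---is precisely the argument of those references, and your flagging of the geometric point (that $K\cap\Omega_{i,\delta}$ lies in a $\delta_i$-collar of $\partial K$ under the regime $\delta_i\lesssim H_i$) as the only nontrivial step is accurate.
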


\subsection{Regular decompositions for vector fields}\label{subsec:regular decomposition}
We first introduce the cochain projections introduced in \cite{MR3246803} and extended in \cite{HP:2019:RegularDecomp, MR4398318}. Let 
\[ 
    \begin{aligned}
        \pi_h^{\mathcal{ND}} & : \HCurlD \rightarrow \ND, \\
        \pi_h^{\mathcal{RT}} & : \HCurlD \rightarrow \RT, \\
        \mbox{and } \pi_h^{0} & : L^2(\Omega) \rightarrow P_0(\Omega)
    \end{aligned}
\] 
denote the cochain projection operators constructed in \cite{MR3246803} and \cite{HP:2019:RegularDecomp}. We note that the operators satisfy the commuting properties on each element in $\cT_h$
\begin{equation}\label{eq:commuting properties}
    \begin{alignedat}{2}
        \CURL \left(\pi_h^{\mathcal{ND}} \bu \right) & = \pi_h^{\mathcal{RT}} \left( \CURL \bu \right) \quad && \forall \bu \in \HCurlD, \\
        \DIV \left(\pi_h^{\mathcal{RT}} \bp \right) & = \pi_h^0 \left( \DIV \bp \right) \quad && \forall \bp \in \HDivD
    \end{alignedat}
\end{equation}
and the local stability estimates
\begin{equation}\label{eq:cochain local stability}
    \begin{alignedat}{2}
        \mnorm{\pi_h^{\mathcal{ND}} \bu}{0}{K} & \le C \left( \mnorm{\bu}{0}{\omega_K} + h_K \mnorm{\CURL \bu}{0}{\omega_K}\right) \quad && \forall \bu \in \HCurlD, \\
        \mnorm{\pi_h^{\mathcal{RT}} \bp}{0}{K} & \le C \left( \mnorm{\bp}{0}{\omega_K} + h_K \mnorm{\DIV \bp}{0}{\omega_K}\right) \quad && \forall \bp \in \HDivD, \\
        \mnorm{\pi_h^0 z}{0}{K} & \le C \mnorm{z}{0}{\omega_K} \quad  && \forall z \in L^2(\Omega),
    \end{alignedat}
\end{equation}
where $K \in \cT_h$, $h_K$ is the diameter of $K$, and $\omega_K$ is the union of the neighboring elements of $K$.
We also remark that the fact that $\bu_h = \pi_h^{\mathcal{ND}} \bu_h, \forall \bu_h \in \ND$ and $\bp_h = \pi_h^{\mathcal{RT}} \bp_h, \forall \bp_h \in \RT$, the inverse inequality, and \eqref{eq:cochain local stability} ensure the estimates
\[ 
        \znorm{\bw_h - \pi_h^{\mathcal{ND}}\bw_h}{0} \le C h \znorm{\bm{\nabla} \bw_h}{0}
\] 
and
\[ 
    \znorm{\bw_h - \pi_h^{\mathcal{RT}}\bw_h}{0} \le C h \znorm{\bm{\nabla} \bw_h}{0}
\] 
for all $\bw_h \in (\Sh)^3$.

We next consider the following regular decomposition in \cite[Theorem 10]{MR4143285} for edge elements:
\begin{lem}[Hiptmair-Pechstein decomposition for edge elements]\label{lem:HP decomp ND}
    For each $\bu_h \in \ND$,  there exist a continuous and piecewise linear scalar function $\chi_h \in \Sh$, a continuous and piecewise linear vector field $\bw_h \in (\Sh)^3$, and a remainder $\widetilde{\bu}_h \in \ND$, all depending linearly on $\bu_h$, providing the discrete regular decomposition
    \begin{equation}\label{eq:ND regular decomp}
        \bu_h = \nabla \chi_h + \pi_{h}^{\mathcal{ND}} \bw_h + \widetilde{\bu}_h
    \end{equation}
    and satisfying the stability estimates
    \begin{align}
        \znorm{\nabla \chi_h}{0} + \znorm{\bw_h}{0} + \znorm{\widetilde{\bu}_h}{0} & \le C \znorm{\bu_h}{0}\label{eq:ND decomp est 1} \\
        \znorm{\bm{\nabla} \bw_h}{0} + \znorm{h^{-1} \widetilde{\bu}_h}{0} & \le C \left(\znorm{\CURL \bu_h}{0} +  \znorm{\bu_h}{0} \right),\label{eq:ND decomp est 2}
    \end{align}
    where $C$ is a generic constant that depends only on the shape of $\Omega$, but not on the shape-regularity constant of $\cT_h$. Here, $h^{-1}$ is the piecewise constant function that is equal to $h_K^{-1}$ on every element $K \in \cT_h$. 
\end{lem}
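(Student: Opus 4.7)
The plan is to reduce the discrete decomposition to a continuous regular decomposition of $\bu_h$, viewed as an element of $\HCurlD$, and then transfer it back into finite element spaces using the cochain projection $\pi_h^{\mathcal{ND}}$ together with a Scott-Zhang style smoother. First, I would invoke the continuous regular decomposition of $\HCurlD$ on a general Lipschitz domain (the continuous core of the Hiptmair-Pechstein theory in \cite{MR4143285,HP:2019:RegularDecomp}): there exist $\chi \in H^1(\Omega)$ and $\bw \in (H^1(\Omega))^3$ with
\[
\bu_h = \nabla \chi + \bw, \qquad \znorm{\chi}{0}+\znorm{\bw}{0} \le C\znorm{\bu_h}{0}, \qquad \znorm{\bw}{1} \le C\left(\znorm{\bu_h}{0}+\znorm{\CURL \bu_h}{0}\right),
\]
with $C$ depending only on the shape of $\Omega$ and not on its topology.

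Next, set $\chi_h\in\Sh$ to be the cochain projection of $\chi$ associated with the vertex elements and $\bw_h := \widetilde{\Pi}_h^{\mathcal{S}}\bw\in(\Sh)^3$ the componentwise Scott-Zhang interpolant of $\bw$. Since $\pi_h^{\mathcal{ND}} \bu_h = \bu_h$ and the cochain projections commute with the exterior derivative (so that $\pi_h^{\mathcal{ND}}(\nabla \chi) = \nabla \chi_h$), applying $\pi_h^{\mathcal{ND}}$ to the continuous decomposition produces $\bu_h = \nabla \chi_h + \pi_h^{\mathcal{ND}} \bw$. Defining $\widetilde{\bu}_h := \pi_h^{\mathcal{ND}}(\bw - \bw_h) \in \ND$ then gives precisely \eqref{eq:ND regular decomp}. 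The bounds in \eqref{eq:ND decomp est 1} follow at once by combining the $L^2$-stability of $\widetilde{\Pi}_h^{\mathcal{S}}$ and of $\pi_h^{\mathcal{ND}}$ with the continuous estimate on $\znorm{\chi}{0}+\znorm{\bw}{0}$. For \eqref{eq:ND decomp est 2}, the bound on $\znorm{\bm{\nabla}\bw_h}{0}$ is immediate from the $H^1$-stability of Scott-Zhang together with the continuous $H^1$-bound on $\bw$. The $h^{-1}$-weighted bound on $\widetilde{\bu}_h$ is the quantitative heart of the discrete step: the local stability \eqref{eq:cochain local stability} applied to $\bw-\bw_h$ yields, for each $K \in \cT_h$,
\[
\mnorm{\widetilde{\bu}_h}{0}{K} \le C\left(\mnorm{\bw-\bw_h}{0}{\omega_K} + h_K \mnorm{\CURL(\bw-\bw_h)}{0}{\omega_K}\right),
\]
and the Scott-Zhang approximation estimate $\mnorm{\bw-\bw_h}{0}{\omega_K}\le Ch_K\seminorm{\bw}{1}{\omega_K^\star}$ on an enlarged patch $\omega_K^\star$, together with the $H^1$-stability bound $\mnorm{\CURL(\bw-\bw_h)}{0}{\omega_K}\le C\seminorm{\bw}{1}{\omega_K^\star}$, allows summation over $K$ to produce $\znorm{h^{-1}\widetilde{\bu}_h}{0} \le C\znorm{\bm{\nabla}\bw}{0} \le C(\znorm{\bu_h}{0}+\znorm{\CURL \bu_h}{0})$.

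The principal obstacle is the continuous ingredient rather than the discrete transfer: constructing a regular decomposition whose constants are independent of the topology of $\Omega$, so that the lemma holds uniformly for domains with cavities or nontrivial first cohomology, is precisely where naive potential-theoretic constructions (e.g.\ those that solve $\CURL \bw = \CURL \bu_h$ via a vector potential on a simply-connected auxiliary domain) fail, and where the smoothed de Rham machinery of \cite{MR4143285,HP:2019:RegularDecomp} is essential. A secondary technical point is that $C$ must be independent of the shape-regularity constant of $\cT_h$; this forces one to use the specific mesh-robust cochain projections of \cite{MR3246803} rather than nodal interpolants, along with a correspondingly robust Scott-Zhang-type smoother, so that every local stability and approximation estimate invoked above remains insensitive to mesh degeneracy.
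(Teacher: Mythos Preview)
The paper does not supply its own proof of this lemma; it is quoted verbatim from \cite[Theorem~10]{MR4143285} and used as a black box. Your sketch is essentially the Hiptmair--Pechstein construction itself: continuous regular decomposition $\bu_h=\nabla\chi+\bw$, then discretise via the commuting cochain projection and a quasi-interpolant, with the remainder absorbing the interpolation error. That is the right architecture.

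One point to tighten. You write that \eqref{eq:ND decomp est 1} follows ``at once'' from the $L^2$-stability of $\widetilde{\Pi}_h^{\mathcal{S}}$. The Scott--Zhang operator is not $L^2$-stable in the naive sense; what you actually have is $\znorm{\bw_h}{0}\le \znorm{\bw}{0}+Ch\seminorm{\bw}{1}{}$ from the approximation property, and the second term is controlled by $\znorm{\bu_h}{0}+\znorm{\CURL\bu_h}{0}$. To reduce this to $C\znorm{\bu_h}{0}$ you need the discrete inverse inequality $h\znorm{\CURL\bu_h}{0}\le C\znorm{\bu_h}{0}$, which is valid since $\bu_h\in\ND$. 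The same manoeuvre is needed for $\znorm{\widetilde{\bu}_h}{0}$. This is not a gap in the argument, but the phrase ``$L^2$-stability of $\widetilde{\Pi}_h^{\mathcal{S}}$'' should be replaced by the approximation estimate plus inverse inequality.

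Your closing remark about independence from the shape-regularity constant is well placed: once you invoke the inverse inequality and the standard Scott--Zhang bounds, the constants inherit shape-regularity dependence, so reproducing the precise statement of the lemma (with $C$ independent of shape regularity) would require the mesh-robust variants you allude to. The paper itself does not elaborate on this and simply defers to \cite{MR4143285,HP:2019:RegularDecomp}.
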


The regular decomposition in \cite[Theorem 13]{MR4143285} for face elements is given in the following lemma:
\begin{lem}[Hiptmair-Pechstein decomposition for face elements]\label{lem:HP decomp RT}
    For each $\bp_h \in \RT$, there exist a vector field $\bm{\rho}_h \in \ND$, a continuous and piecewise linear vector field $\br_h \in (\Sh)^3$, and a remainder $\widetilde{\bp}_h \in \RT$, all depending linearly on $\bp_h$, providing the discrete regular decomposition
    \begin{equation}\label{eq:RT regular decomp}
        \bp_h = \CURL \bm{\rho}_h + \pi_{h}^{\mathcal{RT}} \br_h + \widetilde{\bp}_h
    \end{equation}
    with the bounds
    \begin{align}
        \znorm{\CURL \bm{\rho}_h}{0} + \znorm{\bm{\rho}_h}{0} + \znorm{\br_h}{0} + \znorm{\widetilde{\bp}_h}{0} & \le C \znorm{\bp_h}{0}\label{eq:RT decomp est 1} \\
        \znorm{\bm{\nabla} \br_h}{0} + \znorm{h^{-1} \widetilde{\bp}_h}{0} & \le C \left(\znorm{\DIV \bp_h}{0} + \znorm{\bp_h}{0}\right),\label{eq:RT decomp est 2}
    \end{align}
    where $C$ is a generic constant that depends only on the shape of $\Omega$, but not on the shape-regularity constant of $\cT_h$. Here, $h^{-1}$ is the piecewise constant function that is equal to $h_K^{-1}$ on every element $K \in \cT_h$.
\end{lem}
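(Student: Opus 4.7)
The plan is to lift $\bp_h$ to a continuous regular decomposition on $\HDivD$ and then apply the cochain projection $\pi_h^{\mathcal{RT}}$ to return to $\RT$. The starting point, and the principal technical obstacle, is a sharp continuous decomposition: for every $\bp \in \HDivD$ there exist $\bm{\rho}, \br \in (H^1(\Omega))^3$ depending linearly on $\bp$ such that
\[ \bp = \CURL \bm{\rho} + \br, \qquad \znorm{\bm{\rho}}{1} + \znorm{\br}{0} \le C \znorm{\bp}{0}, \qquad \znorm{\bm{\nabla} \br}{0} \le C \bigl( \znorm{\bp}{0} + \znorm{\DIV \bp}{0} \bigr). \]
The subtle point is the two-speed estimate: $\br$ is only as large as $\bp$ in $L^2$, but its gradient is allowed to see $\DIV \bp$. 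Producing such a decomposition on a general bounded Lipschitz domain, without assumptions on convexity or trivial topology, relies on the regularized Poincar\'{e}-type integral operators of Costabel--McIntosh applied to a splitting of $\bp$ that separates a divergence-free piece from a remainder controlling $\DIV \bp$.

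Once the continuous decomposition is in hand, I would set $\bm{\rho}_h := \pi_h^{\mathcal{ND}} \bm{\rho} \in \ND$ and $\br_h := \widetilde{\Pi}_h^{\mathcal{S}} \br \in (\Sh)^3$ (componentwise Scott--Zhang). Since $\pi_h^{\mathcal{RT}}$ is a projection onto $\RT$, we have $\pi_h^{\mathcal{RT}} \bp_h = \bp_h$, and the commuting property \eqref{eq:commuting properties} yields
\[ \bp_h = \pi_h^{\mathcal{RT}} \CURL \bm{\rho} + \pi_h^{\mathcal{RT}} \br = \CURL \bm{\rho}_h + \pi_h^{\mathcal{RT}} \br_h + \pi_h^{\mathcal{RT}} (\br - \br_h). \]
Setting $\widetilde{\bp}_h := \pi_h^{\mathcal{RT}}(\br - \br_h) \in \RT$ recovers the claimed decomposition \eqref{eq:RT regular decomp}, and the map $\bp_h \mapsto (\bm{\rho}_h, \br_h, \widetilde{\bp}_h)$ is linear because every step in its construction is.

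The estimates \eqref{eq:RT decomp est 1} and \eqref{eq:RT decomp est 2} then follow by combining the continuous bounds on $\bm{\rho}$ and $\br$ with the local stability \eqref{eq:cochain local stability} of the cochain projections and the standard $L^2$/$H^1$ stability and approximation properties of $\widetilde{\Pi}_h^{\mathcal{S}}$. Specifically, $\znorm{\bm{\rho}_h}{0}$ is handled directly by \eqref{eq:cochain local stability} applied to $\bm{\rho} \in (H^1(\Omega))^3$; $\znorm{\CURL \bm{\rho}_h}{0}$ by rewriting $\CURL \bm{\rho}_h = \pi_h^{\mathcal{RT}} \CURL \bm{\rho}$ through the commuting relation and noting that $\DIV \CURL \equiv 0$ kills the $h_K\|\DIV\,\cdot\|$ term in \eqref{eq:cochain local stability}; $\znorm{\br_h}{0}$ and $\znorm{\bm{\nabla}\br_h}{0}$ by Scott--Zhang stability in $L^2$ and $H^1$; and $\znorm{h^{-1}\widetilde{\bp}_h}{0}$ by applying \eqref{eq:cochain local stability} elementwise together with the first-order Scott--Zhang approximation $\mnorm{\br - \br_h}{0}{\omega_K} \le C h_K \seminorm{\br}{1}{\omega_K}$ and then summing using the finite overlap of the patches $\omega_K$. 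The $L^2$ bound on $\widetilde{\bp}_h$ in \eqref{eq:RT decomp est 1} is obtained from the triangle inequality applied to the decomposition itself, using the already established bounds on the other two terms.
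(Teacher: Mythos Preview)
The paper does not supply its own proof of this lemma; it is quoted verbatim from Hiptmair--Pechstein \cite[Theorem~13]{MR4143285} and used as a black box. Your outline is essentially the construction carried out in that reference: start from a continuous regular decomposition $\bp = \CURL\bm{\rho} + \br$ with $\bm{\rho},\br\in (H^1(\Omega))^3$ and the two-speed bounds (this is precisely the Costabel--McIntosh/Hiptmair--Pechstein machinery you invoke), then discretize with $\pi_h^{\mathcal{ND}}$, $\pi_h^{\mathcal{RT}}$, and a Scott--Zhang-type quasi-interpolant, using the commuting identity \eqref{eq:commuting properties} to regroup the pieces. So there is no discrepancy of approach to report.

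One small point worth tightening: you appeal to ``Scott--Zhang stability in $L^2$'' to bound $\znorm{\br_h}{0}$. The Scott--Zhang operator is not $L^2$-stable in the naive sense; the local estimate is $\mnorm{\widetilde{\Pi}_h^{\mathcal{S}}\br}{0}{K}\le C\bigl(\mnorm{\br}{0}{\omega_K}+h_K\seminorm{\br}{1}{\omega_K}\bigr)$, which after summation leaves a residual term $C\,h\,\znorm{\bm{\nabla}\br}{0}\le C\,h\bigl(\znorm{\bp_h}{0}+\znorm{\DIV\bp_h}{0}\bigr)$. This is harmless here because $\bp_h\in\RT$ and the inverse inequality gives $h\,\znorm{\DIV\bp_h}{0}\le C\znorm{\bp_h}{0}$, so \eqref{eq:RT decomp est 1} still closes; but you should make that step explicit rather than calling it $L^2$ stability. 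With that caveat your argument is correct and matches the literature proof the paper relies on.
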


We note that the Hiptmair-Pechstein decompositions are a generalized version of those in \cite{MR2361899} and good alternatives of discrete Helmholtz decompositions introduced in \cite{MR1794350, LXZ:2023:OSSharp} since they are more robust to the topology of the domain.

\subsection{Schwarz framework} In this subsection, we summarize the abstract Schwarz framework, a key ingredient for analyzing domain decomposition methods. For more detail, see \cite[Chapter 2]{MR2104179}.
\begin{lem}\label{lem:smallest eigenvalue ND}
    If for all $\bu_h \in \ND$ there is a representation, $\bu_h = \sum_{i=0}^N \bu_i$, where $\bu_0 \in \NDH$ and $\bu_i \in \ND^{(i)}$ for $i = 1, 2, \cdots, N$, such that
    \[ 
        \sum_{i=0}^N a_c(\bu_i, \bu_i) \le C_c^2 a_c(\bu_h, \bu_h),
    \] 
    then the smallest eigenvalue of the preconditioned linear operator defined in \eqref{eq:preconditioned linear operator} is bounded from below by $C_c^{-2}$.
\end{lem}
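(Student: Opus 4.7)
This is the standard additive Schwarz lower bound lemma, so my plan is to follow the textbook argument from the abstract Schwarz framework (as in \cite[Chapter 2]{MR2104179}). The key structural observation is that the preconditioned operator $M_c^{-1} A_c$ can be written as a sum of $a_c$-orthogonal projections, and the stable splitting hypothesis then feeds directly into a Cauchy--Schwarz argument.

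First, I would identify the ingredients of \eqref{eq:preconditioned linear operator} as projections. Setting $V_0 := \NDH$, $V_i := \ND^{(i)}$ for $i \ge 1$, and $P_i := {R_c^{(i)}}^T {A_c^{(i)}}^{-1} R_c^{(i)} A_c$, I would verify by direct manipulation using $A_c^{(i)} = R_c^{(i)} A_c {R_c^{(i)}}^T$ and the fact that ${R_c^{(i)}}^T$ is the natural injection $V_i \hookrightarrow \ND$ that, for every $v_i \in V_i$,
\[
a_c(P_i \bu_h, v_i) = a_c(\bu_h, v_i).
\]
This shows $P_i$ is the $a_c$-orthogonal projection onto $V_i$, and in particular $a_c(P_i \bu_h, P_i \bu_h) = a_c(P_i \bu_h, \bu_h)$. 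Consequently $M_c^{-1} A_c = \sum_{i=0}^{N} P_i$ is symmetric positive definite with respect to $a_c$.

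Second, I would translate the eigenvalue bound into an inequality on the Rayleigh quotient: it suffices to show $a_c(\bu_h, \bu_h) \le C_c^2 \, a_c(M_c^{-1} A_c \bu_h, \bu_h)$ for all $\bu_h \in \ND$. Given the stable decomposition $\bu_h = \sum_{i=0}^N \bu_i$ from the hypothesis, I would expand
\[
a_c(\bu_h, \bu_h) = \sum_{i=0}^N a_c(\bu_i, \bu_h) = \sum_{i=0}^N a_c(\bu_i, P_i \bu_h),
\]
where the second equality uses $\bu_i \in V_i$ together with the projection identity just established. Two applications of the Cauchy--Schwarz inequality (first in the $a_c$ inner product termwise, then in $\ell^2$ over the index $i$), combined with $a_c(P_i \bu_h, P_i \bu_h) = a_c(P_i \bu_h, \bu_h)$, then give
\[
a_c(\bu_h, \bu_h) \le \Bigl(\sum_{i=0}^N a_c(\bu_i, \bu_i)\Bigr)^{1/2} \Bigl(\sum_{i=0}^N a_c(P_i \bu_h, \bu_h)\Bigr)^{1/2} \le C_c \, a_c(\bu_h, \bu_h)^{1/2} \, a_c(M_c^{-1} A_c \bu_h, \bu_h)^{1/2}.
\]
Dividing through by $a_c(\bu_h, \bu_h)^{1/2}$ and squaring yields $C_c^{-2} \, a_c(\bu_h, \bu_h) \le a_c(M_c^{-1} A_c \bu_h, \bu_h)$, which is the claimed lower eigenvalue bound.

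There is no genuine obstacle here: the entire argument is algebraic and relies only on the symmetry of $A_c$, the definitions of $R_c^{(i)}$ and $A_c^{(i)}$, and the stable decomposition hypothesis. The only point requiring a little care is the verification that $P_i$ is truly an $a_c$-orthogonal projection, since this is what makes both the collapsing step $a_c(\bu_i, \bu_h) = a_c(\bu_i, P_i \bu_h)$ and the identity $a_c(P_i \bu_h, \bu_h) = a_c(P_i \bu_h, P_i \bu_h)$ available. All the substantive work of the paper will therefore lie in constructing the stable decomposition that supplies the constant $C_c$, not in this abstract lemma.
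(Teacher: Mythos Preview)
Your proof is correct and is precisely the standard textbook argument for the abstract additive Schwarz lower bound. The paper itself does not supply a proof for this lemma; it is stated as part of a summary of the Schwarz framework with the reader referred to \cite[Chapter 2]{MR2104179}, and your argument is exactly what one finds there.
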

\begin{lem}\label{lem:smallest eigenvalue RT}
    If for all $\bp_h \in \RT$ there is a representation, $\bp_h = \sum_{i=0}^N \bp_i$, where $\bp_0 \in \RTH$ and $\bp_i \in \RT^{(i)}$ for $i = 1, 2, \cdots, N$, such that
    \[ 
        \sum_{i=0}^N a_d(\bp_i, \bp_i) \le C_d^2 a_d(\bp_h, \bp_h),
    \] 
    then the smallest eigenvalue of the preconditioned linear operator defined in \eqref{eq:preconditioned linear operator} is bounded from below by $C_d^{-2}$.
\end{lem}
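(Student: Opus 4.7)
The plan is to apply the standard abstract additive Schwarz framework (see \cite[Chapter 2]{MR2104179}); the result for $\RT$ is purely algebraic and follows once the stable decomposition is assumed, so the proof will mirror the one for the edge-element case in Lemma~\ref{lem:smallest eigenvalue ND}.

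First I would introduce the Galerkin projections $P_i : \RT \to \RT^{(i)}$ (with $\RT^{(0)} := \RTH$) defined by
\[
a_d(P_i \bp_h, \bq_i) = a_d(\bp_h, \bq_i) \quad \forall \bq_i \in \RT^{(i)}.
\]
A direct computation using the definitions of $R_d^{(i)}$, ${R_d^{(i)}}^T$ and $A_d^{(i)}$ shows that the preconditioned operator \eqref{eq:preconditioned linear operator} equals the sum of these projections, i.e.
\[
M_d^{-1} A_d = \sum_{i=0}^N P_i.
\]
Since each $P_i$ is a Galerkin projection with respect to $a_d$, it is symmetric and nonnegative in the $a_d$-inner product, so
\[
a_d(M_d^{-1} A_d \bp_h, \bp_h) = \sum_{i=0}^N a_d(P_i \bp_h, \bp_h) = \sum_{i=0}^N a_d(P_i \bp_h, P_i \bp_h).
\]

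Next, given any decomposition $\bp_h = \sum_{i=0}^N \bp_i$ with $\bp_0 \in \RTH$ and $\bp_i \in \RT^{(i)}$, the defining property of $P_i$ yields $a_d(\bp_h, \bp_i) = a_d(P_i \bp_h, \bp_i)$, hence
\[
a_d(\bp_h, \bp_h) = \sum_{i=0}^N a_d(P_i \bp_h, \bp_i).
\]
Applying the Cauchy--Schwarz inequality in $a_d$ and then in $\ell^2$ gives
\[
a_d(\bp_h, \bp_h) \le \Bigl(\sum_{i=0}^N a_d(P_i \bp_h, P_i \bp_h)\Bigr)^{1/2} \Bigl(\sum_{i=0}^N a_d(\bp_i, \bp_i)\Bigr)^{1/2}.
\]
Inserting the hypothesized stability bound $\sum_{i=0}^N a_d(\bp_i, \bp_i) \le C_d^2 \, a_d(\bp_h, \bp_h)$ and dividing through, I obtain
\[
a_d(\bp_h, \bp_h) \le C_d^2 \, a_d(M_d^{-1} A_d \bp_h, \bp_h),
\]
which is exactly the statement that $\lambda_{\min}(M_d^{-1} A_d) \ge C_d^{-2}$.

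The proof is essentially bookkeeping, so there is no real obstacle: the only point to be careful about is the identification $M_d^{-1} A_d = \sum P_i$, which relies on ${R_d^{(i)}}^T$ being the natural injection (so that $R_d^{(i)}$ is its $a_d$-adjoint up to the appropriate local solve) and on $A_d^{(i)}$ being the exact restriction of $A_d$ to $\RT^{(i)}$. Since these facts are built into the definitions given in Section~\ref{sec:overlapping schwarz methods}, the lemma follows.
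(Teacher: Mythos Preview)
Your argument is correct and is exactly the standard abstract additive Schwarz proof; the paper itself does not prove this lemma but simply cites \cite[Chapter 2]{MR2104179}, and what you have written is precisely the proof one finds there.
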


\begin{lem}\label{lem:largest eigenvalue}
The largest eigenvalue of the operator introduced in \eqref{eq:preconditioned linear operator} is bounded from above by $(N_0 + 1)$, where $N_0$ is defined in Assumption~\ref{assump:finite covering}.
\end{lem}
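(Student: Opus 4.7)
The plan is to exploit the fact that each of the operators
\[
T_i := {R_{\xi}^{(i)}}^T {A_{\xi}^{(i)}}^{-1} R_{\xi}^{(i)} A_{\xi}, \qquad i=0,1,\ldots,N,
\]
appearing in \eqref{eq:preconditioned linear operator} is an $a_\xi$-orthogonal projection onto its range. For $i=0$ this range is $\NDH$ or $\RTH$, and for $i\ge 1$ it is the local subspace $\ND^{(i)}$ or $\RT^{(i)}$. This follows from a direct computation: since ${R_{\xi}^{(i)}}^T$ is the natural injection and $A_{\xi}^{(i)} = R_{\xi}^{(i)} A_\xi {R_{\xi}^{(i)}}^T$ is the corresponding principal minor, one checks that $T_i^2 = T_i$, and for any $\bw$ in the range of ${R_{\xi}^{(i)}}^T$,
\[
a_\xi(T_i \bu, \bw) = a_\xi(\bu, \bw), \qquad \forall \bu.
\]
Hence each $T_i$ has spectral radius exactly $1$ with respect to the $a_\xi$-inner product.

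Next, I would invoke Assumption~\ref{assump:finite covering} to partition the index set $\{1,\ldots,N\}$ into at most $N_0$ color classes $I_1,\ldots,I_{N_0}$ such that, for each $c$, the overlapping subdomains $\{\Omega_i'\}_{i\in I_c}$ are pairwise disjoint. For indices $i,j \in I_c$ with $i\ne j$, the local spaces $\ND^{(i)}$ and $\ND^{(j)}$ (respectively $\RT^{(i)}$ and $\RT^{(j)}$) have disjoint supports, so they are $a_\xi$-orthogonal. Consequently, the combined operator
\[
T^{(c)} := \sum_{i \in I_c} T_i
\]
is itself the $a_\xi$-orthogonal projection onto the direct sum of the local subspaces in color $c$, and therefore also has spectral radius bounded by $1$ in the $a_\xi$-inner product.

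Finally, I would rewrite
\[
M_{\xi}^{-1} A_{\xi} = T_0 + \sum_{c=1}^{N_0} T^{(c)}
\]
as a sum of $N_0+1$ $a_\xi$-orthogonal projections. A standard application of the triangle inequality in the $a_\xi$-operator norm, combined with the fact that $a_\xi$-orthogonal projections are self-adjoint with respect to $a_\xi$ and have operator norm $1$, yields
\[
\lambda_{\max}(M_{\xi}^{-1} A_{\xi}) \le \lambda_{\max}(T_0) + \sum_{c=1}^{N_0} \lambda_{\max}(T^{(c)}) \le N_0 + 1.
\]
No serious obstacle is expected: the argument is entirely at the abstract Schwarz level and does not require the regular decompositions of Subsection~\ref{subsec:regular decomposition}. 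The only point deserving care is the coloring step, where one must verify that disjoint supports of the local test functions imply $a_\xi$-orthogonality; this is immediate because both bilinear forms $a_c$ and $a_d$ are local integrals.
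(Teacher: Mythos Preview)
Your argument is correct and is precisely the standard abstract Schwarz argument found in \cite[Chapter~2]{MR2104179}. Note, however, that the paper does not supply its own proof of this lemma: it is stated as a quoted result of the general Schwarz framework, with a pointer to that reference. So there is no proof in the paper to compare against; what you have written is essentially the proof the reader would find upon following the citation.
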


\subsection{Condition number estimate for $\HCurl$}
Based on Lemma~\ref{lem:HP decomp ND}, for any $\bu_h \in \ND$, we can find $\chi_h$, $\bw_h$, and $\widetilde{\bu}_h$, which satisfy \eqref{eq:ND decomp est 1} and \eqref{eq:ND decomp est 2}. We then consider
\begin{equation}\label{eq:ND stable decomposition 1}
    \begin{aligned}
        \bu_0 & := \nabla \chi_0 + \bw_0, \\
        \bu_i & := \nabla \chi_i + \bw_i + \widetilde{\bu}_i, \quad i = 1, 2, \cdots, N, 
    \end{aligned}
\end{equation}
where
\begin{equation}\label{eq:ND stable decomposition 2}
    \begin{aligned}
        \chi_0 & = \widetilde{\Pi}_H^{\mathcal{S}} \chi_h, \\
        \bw_0 & = Q_H^{\mathcal{ND}} \bw_h, \\
        \chi_i & = \Pi_h^{\mathcal{S}} (\theta_i(\chi_h - \chi_0)), \\
        \bw_i & = \Pi_{h}^{\mathcal{ND}}(\theta_i(\pi_{h}^{\mathcal{ND}} \bw_h - \bw_0)), \\
        \widetilde{\bu}_i & = \Pi_{h}^{\mathcal{ND}}(\theta_i \widetilde{\bu}_h).
    \end{aligned}
\end{equation}
Here, the interpolation operators $\widetilde{\Pi}_H^{\mathcal{S}}$, $\Pi_h^{\mathcal{S}}$, and $\Pi_{h}^{\mathcal{ND}}$ are defined in Section~\ref{sec:model problems and finite elements} and the set $\left\{ \theta_i \right\}$, the $L^2-$projection operator $Q_H^{\mathcal{ND}}$, and the cochain projection $\pi_h^{\mathcal{ND}}$ are mentioned in Section~\ref{sec:overlapping schwarz methods}, \ref{subsec:preliminaries}, and \ref{subsec:regular decomposition}, respectively. From \eqref{eq:ND stable decomposition 1} and \eqref{eq:ND stable decomposition 2}, we can easily check $\bu_0 \in \NDH, \bu_i \in \ND^{(i)},$ and $\bu_h = \sum_{i=0}^N \bu_i$. We separately estimate the coarse component $\bu_0$ and the local components, i.e.,  $\bu_i, i = 1, \cdots, N$.

We first consider the coarse component. The next lemma shows the stability of $\bu_0$.
\begin{lem}\label{lem:ND coarse estimate}
    Assume that the constant $\eta_c$ in \eqref{eq:H(curl) bilinear form} is less than or equal to one. Then, we have the following estimate for the coarse component in \eqref{eq:ND stable decomposition 1}:
    \begin{equation}\label{eq:ND coarse estimate}
        a_c(\bu_0, \bu_0) \le C a_c(\bu_h, \bu_h),
    \end{equation}
    where the constant $C$ does not depend on $N$, $h$, $H_i$, $\delta_i$, and $\eta_c$.
\end{lem}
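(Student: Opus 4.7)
The plan is to estimate separately the two pieces of $a_c(\bu_0,\bu_0)=\eta_c\|\CURL\bu_0\|_0^2+\|\bu_0\|_0^2$ and then combine using the hypothesis $\eta_c\le1$. Because $\bu_0=\nabla\chi_0+\bw_0$ with $\chi_0=\widetilde{\Pi}_H^{\mathcal{S}}\chi_h$ and $\bw_0=Q_H^{\mathcal{ND}}\bw_h$, the gradient part drops out of the curl and I only need to handle $\CURL \bw_0$. I will invoke the Hiptmair--Pechstein bounds \eqref{eq:ND decomp est 1}--\eqref{eq:ND decomp est 2} for $(\chi_h,\bw_h,\widetilde{\bu}_h)$ at every step.

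First, I would write $\CURL \bu_0=\CURL(Q_H^{\mathcal{ND}}\bw_h)$ and apply the first inequality of Lemma~\ref{lem:L2 projection} to get $\|\CURL \bu_0\|_0\le C|\bw_h|_1\le C\|\bm{\nabla}\bw_h\|_0$. The estimate \eqref{eq:ND decomp est 2} then yields
\[
\|\CURL \bu_0\|_0\le C\bigl(\|\CURL\bu_h\|_0+\|\bu_h\|_0\bigr).
\]

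Second, for the $L^2$ part I would use the triangle inequality $\|\bu_0\|_0\le\|\nabla\chi_0\|_0+\|\bw_0\|_0$. The Scott--Zhang operator is stable in the $H^1$ seminorm on continuous piecewise linears, so $\|\nabla\chi_0\|_0=|\widetilde{\Pi}_H^{\mathcal{S}}\chi_h|_1\le C|\chi_h|_1=C\|\nabla\chi_h\|_0$. The $L^2$-projection is a contraction in $L^2$, giving $\|\bw_0\|_0\le\|\bw_h\|_0$. Applying \eqref{eq:ND decomp est 1} to both contributions produces
\[
\|\bu_0\|_0\le C\bigl(\|\nabla\chi_h\|_0+\|\bw_h\|_0\bigr)\le C\|\bu_h\|_0.
\]

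Finally, I would square, multiply the first estimate by $\eta_c$, and add. The cross term $\eta_c\|\bu_h\|_0^2$ coming from expanding $\bigl(\|\CURL\bu_h\|_0+\|\bu_h\|_0\bigr)^2$ is bounded by $\|\bu_h\|_0^2$ thanks to the assumption $\eta_c\le 1$, which is the single place where this hypothesis is used. The result is $a_c(\bu_0,\bu_0)\le C\bigl(\eta_c\|\CURL\bu_h\|_0^2+\|\bu_h\|_0^2\bigr)=Ca_c(\bu_h,\bu_h)$, as desired.

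There is no serious obstacle here; the proof is essentially a ``plug in and verify''. The one point that requires some care is the control of $\CURL\bw_0$: the naive bound $\|\CURL(Q_H^{\mathcal{ND}}\bw_h)\|_0\le\|\CURL\bw_h\|_0$ is not directly available since $Q_H^{\mathcal{ND}}$ need not commute with the curl, so the commuting-diagram-free estimate of Lemma~\ref{lem:L2 projection} (which replaces this by the $H^1$-seminorm of $\bw_h$, itself bounded via \eqref{eq:ND decomp est 2}) is essential. Once this observation is made, the rest is routine.
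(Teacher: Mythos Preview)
Your proposal is correct and follows essentially the same argument as the paper: both use the Scott--Zhang $H^1$-seminorm stability for $\nabla\chi_0$, the $L^2$-contraction of $Q_H^{\mathcal{ND}}$ for $\|\bw_0\|_0$, Lemma~\ref{lem:L2 projection} together with \eqref{eq:ND decomp est 2} for $\|\CURL\bw_0\|_0$, and the hypothesis $\eta_c\le 1$ to absorb the resulting $\eta_c\|\bu_h\|_0^2$ term. The only difference is organizational---you split by ``curl part vs.\ $L^2$ part'' whereas the paper splits by ``$\chi_0$ term vs.\ $\bw_0$ term''---and your observation about why the direct commutation $\|\CURL(Q_H^{\mathcal{ND}}\bw_h)\|_0\le\|\CURL\bw_h\|_0$ is unavailable is exactly the point.
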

\begin{proof}
We note that $\bu_0 = \nabla \chi_0 + \bw_0$. We estimate each term separately.
\begin{itemize}
    \item Term $\chi_0$:
    
    From the property of Scott-Zhang interpolation and \eqref{eq:ND decomp est 1}, we have
    \begin{equation}\label{eq:est chi_0}
        a_c(\nabla \chi_0, \nabla \chi_0) = \znorm{\nabla \chi_0}{0}^2 \le C \znorm{\nabla \chi_h}{0}^2 \le C\znorm{\bu_h}{0}^2.
    \end{equation}
    \item Term $\bw_0$:
    
    By using the definition of $Q_H^{\mathcal{ND}}$ and \eqref{eq:ND decomp est 1}, we obtain
    \begin{equation}\label{eq:est w_0}
        \znorm{\bw_0}{0}^2 = \znorm{Q_H^{\mathcal{ND}} \bw_h}{0}^2 \le \znorm{\bw_h}{0}^2 \le C \znorm{\bu_h}{0}^2.
    \end{equation}
    Due to Lemma~\ref{lem:L2 projection} and \eqref{eq:ND decomp est 2}, we have
    \begin{equation}\label{eq:est curl w_0}
        \begin{aligned}
        \eta_c \znorm{\CURL \bw_0}{0}^2 & = \eta_c \znorm{\CURL (Q_H^{\mathcal{ND}} \bw_h)}{0}^2 \le C \eta_c\znorm{\bm{\nabla} \bw_h}{0}^2 \\
        & \le C \left( \eta_c \znorm{\bu_h}{0}^2 + \eta_c \znorm{\CURL \bu_h}{0}^2\right) \le C a_c(\bu_h, \bu_h).
        \end{aligned}
    \end{equation}
\end{itemize}
Hence, by combining \eqref{eq:est chi_0}, \eqref{eq:est w_0}, and \eqref{eq:est curl w_0}, the estimate \eqref{eq:ND coarse estimate} holds.
\end{proof}

We next consider an estimate for local components. Since the proof of the lemma is overlong, we estimate each term in \eqref{eq:ND stable decomposition 1} individually using propositions and put them together later.
\begin{prop}\label{prop:est chi_i}
    Consider $\chi_i$ defined in \eqref{eq:ND stable decomposition 2} and assume that $\eta_c \le 1$. Then, we have
    \[ 
        \sum_{i=1}^N a_c(\nabla \chi_i, \nabla \chi_i) \le C \left(\max_{1 \le i \le N}\left(1 + \dfrac{H_i}{\delta_i}\right)\right)a_c(\bu_h, \bu_h),
    \] 
    where the constant $C$ is independent from $N$, $h$, $H_i$, $\delta_i$, and $\eta_c$.
\end{prop}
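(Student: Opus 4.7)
The plan is to reduce the claim to $L^2$ estimates of gradients, since $\CURL \nabla \chi_i = 0$ gives $a_c(\nabla\chi_i,\nabla\chi_i) = \|\nabla\chi_i\|_0^2$. Setting $w := \chi_h - \chi_0 = \chi_h - \widetilde{\Pi}_H^{\mathcal{S}}\chi_h$, which lies in $\Sh$ (because $\SH \subset \Sh$ by nestedness of the meshes), we want a bound on $\sum_{i=1}^N \|\nabla \Pi_h^{\mathcal{S}}(\theta_i w)\|_0^2$. A standard stability property of the nodal interpolant on a product of a hat-like function and a piecewise linear function allows us to replace $\Pi_h^{\mathcal{S}}(\theta_i w)$ by $\theta_i w$ inside the $\|\nabla\cdot\|_{0,\Omega_i'}$ norm, up to a constant. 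From there the Leibniz rule yields the splitting $\|\nabla(\theta_i w)\|_{0,\Omega_i'}^2 \le 2\|\theta_i \nabla w\|_{0,\Omega_i'}^2 + 2\|w\, \nabla \theta_i\|_{0,\Omega_i'}^2$.

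The first piece is benign: using $0 \le \theta_i \le 1$, the finite-covering Assumption~\ref{assump:finite covering}, and the $H^1$-stability of the Scott--Zhang operator $\widetilde{\Pi}_H^{\mathcal{S}}$, I get $\sum_{i=1}^N \|\theta_i \nabla w\|_{0,\Omega_i'}^2 \le N_0 \|\nabla w\|_0^2 \le C\|\nabla \chi_h\|_0^2$. The delicate piece is the second one, where I will use the fact that $\nabla \theta_i$ is supported in the overlap region $\Omega_{i,\delta}$ together with $\|\nabla\theta_i\|_\infty \le C/\delta_i$, so that $\|w\,\nabla\theta_i\|_{0,\Omega_i'}^2 \le C\delta_i^{-2}\|w\|_{0,\Omega_{i,\delta}}^2$. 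This is exactly the form handled by Lemma~\ref{lem:est layer Omega_i}: applying it to $w$ and summing in $i$ with the finite-covering bookkeeping gives
\[
\sum_{i=1}^N \delta_i^{-2}\|w\|_{0,\Omega_{i,\delta}}^2 \le C \max_{1\le i\le N}\!\left(1+\dfrac{H_i}{\delta_i}\right)\!\|\nabla w\|_0^2 + C\max_{1\le i\le N}\dfrac{1}{\delta_i H_i}\|w\|_0^2.
\]
For the last term I use the approximation property of the Scott--Zhang operator, $\|w\|_0 = \|\chi_h - \widetilde{\Pi}_H^{\mathcal{S}}\chi_h\|_0 \le C H |\chi_h|_1$, which converts $\|w\|_0^2/(\delta_i H_i)$ into $C(H_i/\delta_i)|\chi_h|_1^2$, preserving the linear dependence.

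Collecting everything, all terms are controlled by $C\max_i(1 + H_i/\delta_i)\|\nabla\chi_h\|_0^2$. Finally, the stability estimate \eqref{eq:ND decomp est 1} of the Hiptmair--Pechstein decomposition gives $\|\nabla \chi_h\|_0^2 \le C\|\bu_h\|_0^2 \le C a_c(\bu_h,\bu_h)$, and the proposition follows.

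The main obstacle I anticipate is the application of Lemma~\ref{lem:est layer Omega_i}: one must verify that $w = \chi_h - \widetilde{\Pi}_H^{\mathcal{S}}\chi_h$ qualifies as a piecewise $H^1$ function on $\cT_H$ (it is, since both terms are globally continuous and piecewise polynomial on a refinement of $\cT_H$), and then carefully track the summation over overlapping neighbors $I_i$ so that the $N_0$-colorability yields a uniform constant rather than spurious $N$-dependence. The Scott--Zhang approximation that converts the low-order term into the desired linear factor $H_i/\delta_i$ is the key step that prevents a quadratic blow-up and enables the sharp bound.
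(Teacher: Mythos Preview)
Your proof is correct. The paper takes a much shorter route: it simply invokes \cite[Lemma~3.12]{MR2104179}, the classical stable-decomposition bound for scalar $H^1$ problems, which directly yields
\[
\sum_{i=1}^N \|\nabla\chi_i\|_0^2 \le C\Big(\max_{1\le i\le N}\big(1+\tfrac{H_i}{\delta_i}\big)\Big)\|\nabla\chi_h\|_0^2,
\]
and then applies \eqref{eq:ND decomp est 1} to finish. You instead unpack this scalar result from first principles, using the stability of the nodal interpolant, the Leibniz rule, the layer estimate Lemma~\ref{lem:est layer Omega_i}, and the approximation and $H^1$-stability of the Scott--Zhang operator. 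In effect you are re-deriving the scalar stable decomposition using only tools already present in the paper; note that Lemma~\ref{lem:est layer Omega_i} is stronger than needed here, since $w=\chi_h-\widetilde\Pi_H^{\mathcal S}\chi_h$ is globally $H^1$, not merely piecewise $H^1$ on $\cT_H$. The paper's approach buys brevity; yours is self-contained and makes explicit where the linear $H_i/\delta_i$ factor originates, at the cost of extra bookkeeping. One small point to tighten: in converting $\tfrac{1}{\delta_i H_i}\|w\|_0^2$ into $C\tfrac{H_i}{\delta_i}|\chi_h|_1^2$ you implicitly use $H\approx H_i$, which follows from the quasi-uniformity of $\cT_H$ and the assumption that each $\Omega_i$ is a uniformly bounded union of coarse elements; it is worth stating this explicitly.
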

\begin{proof}
We have the following estimate from \cite[Lemma 3.12]{MR2104179} and \eqref{eq:ND decomp est 1}:
\begin{equation}\label{eq:est chi_i}
    \begin{aligned}
    \sum_{i=1}^N a_c(\nabla \chi_i, \nabla \chi_i) & = C \left(\max_{1 \le i \le N}\left(1 + \dfrac{H_i}{\delta_i}\right)\right) \znorm{\nabla \chi_h}{0}^2 \\
    & \le C \left(\max_{1 \le i \le N}\left(1 + \dfrac{H_i}{\delta_i}\right)\right)\znorm{\bu_h}{0}^2\\
    & \le C \left(\max_{1 \le i \le N}\left(1 + \dfrac{H_i}{\delta_i}\right)\right)a_c(\bu_h, \bu_h).
    \end{aligned}
\end{equation}
\end{proof}

\begin{prop}\label{prop:est w_i}
    Consider $\bw_i$ defined in \eqref{eq:ND stable decomposition 2} and assume that $\eta_c \le 1$. Then, we have
    \[ 
        \sum_{i=1}^N a_c(\bw_i, \bw_i) \le C \left(\max_{1 \le i \le N}\left(1 + \dfrac{H_i}{\delta_i}\right)\right)a_c(\bu_h, \bu_h),
    \] 
    where the constant $C$ is independent from $N$, $h$, $H_i$, $\delta_i$, and $\eta_c$.
\end{prop}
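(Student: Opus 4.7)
The plan is to use $a_c(\bw_i,\bw_i) = \mnorm{\bw_i}{0}{\Omega_i'}^2 + \eta_c \mnorm{\CURL \bw_i}{0}{\Omega_i'}^2$ and bound the two pieces separately, leveraging the interpolation stability of Lemma~\ref{lem:interpolation with theta_i}, the finite-overlap Assumption~\ref{assump:finite covering}, the commuting property \eqref{eq:commuting properties}, and the Hiptmair--Pechstein stability estimates \eqref{eq:ND decomp est 1}--\eqref{eq:ND decomp est 2}. Throughout, I denote $\bm{v} := \pi_h^{\mathcal{ND}} \bw_h - \bw_0$, so that $\bw_i = \Pi_h^{\mathcal{ND}}(\theta_i \bm{v})$.

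First, for the $L^2$ part, I would apply Lemma~\ref{lem:interpolation with theta_i} to obtain $\mnorm{\bw_i}{0}{\Omega_i'} \le C \mnorm{\theta_i \bm{v}}{0}{\Omega_i'} \le C \mnorm{\bm{v}}{0}{\Omega_i'}$ since $0 \le \theta_i \le 1$, then sum over $i$ using the finite-overlap property to get $\sum_i \mnorm{\bw_i}{0}{}^2 \le C \mnorm{\bm{v}}{0}{}^2$. Using the $L^2$-stability of $\pi_h^{\mathcal{ND}}$ from \eqref{eq:cochain local stability} (applied to $\bw_h\in(\Sh)^3$, which has no $\CURL$ contribution asymptotically in $h$) and the $L^2$-contraction of $Q_H^{\mathcal{ND}}$, this is controlled by $C\znorm{\bw_h}{0}^2 \le C \znorm{\bu_h}{0}^2 \le C a_c(\bu_h,\bu_h)$ via \eqref{eq:ND decomp est 1}. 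This part does \emph{not} cost any factor $(1+H_i/\delta_i)$.

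For the curl part, I apply Lemma~\ref{lem:interpolation with theta_i} and the product rule $\CURL(\theta_i \bm{v}) = \nabla \theta_i \times \bm{v} + \theta_i \CURL \bm{v}$. The term with $\theta_i \CURL \bm{v}$ is bounded locally by $\mnorm{\CURL \bm{v}}{0}{\Omega_i'}$, and after summing the commuting property $\CURL\pi_h^{\mathcal{ND}}\bw_h = \pi_h^{\mathcal{RT}}\CURL\bw_h$, the $L^2$-stability of $\pi_h^{\mathcal{RT}}$, and the bound on $\CURL Q_H^{\mathcal{ND}}\bw_h$ from Lemma~\ref{lem:L2 projection} yield a global bound by $C\eta_c\znorm{\bm{\nabla}\bw_h}{0}^2$, which by \eqref{eq:ND decomp est 2} is $\le C a_c(\bu_h,\bu_h)$. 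The remaining term, which is where the $(1+H_i/\delta_i)$ factor must appear, is
\[
    \eta_c \sum_{i=1}^N \mnorm{\nabla \theta_i \times \bm{v}}{0}{\Omega_i'}^2 \le C \eta_c \sum_{i=1}^N \delta_i^{-2} \mnorm{\bm{v}}{0}{\Omega_i'}^2,
\]
using $\znorm{\nabla\theta_i}{\infty}\le C/\delta_i$ from \eqref{eq:property theta_i}. Here I split $\bm{v} = (\pi_h^{\mathcal{ND}}\bw_h - \bw_h) + (\bw_h - \bw_0)$; the first summand satisfies $\znorm{\pi_h^{\mathcal{ND}}\bw_h - \bw_h}{0}\le C h \znorm{\bm{\nabla}\bw_h}{0}$ (stated in Section~\ref{subsec:regular decomposition}), and together with $h \le C\delta_i$ it contributes $C\eta_c\znorm{\bm{\nabla}\bw_h}{0}^2 \le C a_c(\bu_h,\bu_h)$.

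The main obstacle is the contribution of $\bw_h - \bw_0 = \bw_h - Q_H^{\mathcal{ND}}\bw_h$, which is only piecewise $H^1$ across the coarse mesh. Since $\nabla\theta_i$ is supported in the overlap layer, I would only need $\delta_i^{-2}\mnorm{\bw_h - \bw_0}{0}{\Omega_{i,\delta}}^2$, for which Lemma~\ref{lem:est layer Omega_i} is tailor-made. Plugging in gives a bound in terms of $\sum_{K\subset\Omega_j}[(1+H_i/\delta_i)\seminorm{\bw_h-\bw_0}{1}{K}^2 + (\delta_i H_i)^{-1}\mnorm{\bw_h-\bw_0}{0}{K}^2]$. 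The volumetric $L^2$ term is handled by Lemma~\ref{lem:L2 projection K} and standard $L^2$-approximation properties of $Q_H^{\mathcal{ND}}$ (Lemma~\ref{lem:L2 projection}) to yield $C H_K^2 \seminorm{\bw_h}{1}{K}^2$, absorbing one factor of $H_i$; the $H^1$-seminorm term is bounded by $C\seminorm{\bw_h}{1}{K}^2$ after controlling $\seminorm{\bw_0}{1}{K}$ via an inverse inequality on the coarse N\'ed\'elec element combined with $\mnorm{\bw_0-\bw_h}{0}{K}\le C H_K\seminorm{\bw_h}{1}{\omega_K}$. Summing over $i$ and using finite overlap yields $C(\max_i(1+H_i/\delta_i))\znorm{\bm{\nabla}\bw_h}{0}^2$, which by \eqref{eq:ND decomp est 2} is $\le C(\max_i(1+H_i/\delta_i))\,a_c(\bu_h,\bu_h)$. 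Combining all three contributions gives the desired bound, with the delicate piecewise $H^1$ control of the coarse-scale $L^2$-projection error being the single technical hinge of the argument.
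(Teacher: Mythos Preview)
Your approach is essentially the paper's: the same split $\bm{v}=\bm{v}_1+\bm{v}_2$ with $\bm{v}_1=\pi_h^{\mathcal{ND}}\bw_h-\bw_h$ and $\bm{v}_2=\bw_h-Q_H^{\mathcal{ND}}\bw_h$, the same use of Lemma~\ref{lem:interpolation with theta_i} and the product rule for the curl part, the same $h\le C\delta_i$ trick to dispose of $\bm{v}_1$, and the same reliance on Lemma~\ref{lem:est layer Omega_i} for the $\bm{v}_2$ contribution on the overlap layer.

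There is one technical slip. When you close the piecewise-$H^1$ and $L^2$ contributions of $\bm{v}_2$ coming out of Lemma~\ref{lem:est layer Omega_i}, you invoke \emph{local} bounds of the form $\mnorm{Q_H^{\mathcal{ND}}\bw_h-\bw_h}{0}{K}\le CH_K\seminorm{\bw_h}{1}{\omega_K}$. But Lemma~\ref{lem:L2 projection} only supplies the \emph{global} estimate $\znorm{Q_H^{\mathcal{ND}}\bw_h-\bw_h}{0}\le CH\seminorm{\bw_h}{1}{}$, and $Q_H^{\mathcal{ND}}$ is a global $L^2$-projection, so elementwise approximation is not immediate. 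The paper sidesteps this: for the $H^1$-seminorm term it writes
\[
\seminorm{\bm{v}_2}{1}{K}\le C\seminorm{\bw_h}{1}{K}+CH^{-1}\mnorm{Q_H^{\mathcal{ND}}\bw_h-\bw_h}{0}{K}
\]
(via an inverse inequality after subtracting the local constant $Q_{H,K}^0\bw_h$ from Lemma~\ref{lem:L2 projection K}), then carries the second summand through the sum over $K$ and applies the global Lemma~\ref{lem:L2 projection} only at the very end. The $(\delta_iH_i)^{-1}\mnorm{\bm{v}_2}{0}{K}^2$ term is treated the same way, using Assumption~\ref{assump:H} to trade $H_i^{-1}$ for $H^{-1}$ before summing. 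With this adjustment your argument goes through and matches the paper's.
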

\begin{proof}
    By using Lemma~\ref{lem:interpolation with theta_i}, the properties of $\theta_i$ in \eqref{eq:property theta_i}, the triangle inequality, \eqref{eq:cochain local stability}, the inverse estimate, the finite covering property in Assumption~\ref{assump:finite covering}, \eqref{eq:est w_0}, and \eqref{eq:ND decomp est 1}, we obtain 
    \begin{equation}\label{eq:est w_i}
    \begin{aligned}
        \sum_{i=1}^N \mnorm{\bw_i}{0}{\Omega_i'}^2 & \le C \sum_{i=1}^N \mnorm{\theta_i(\pi_{h}^{\mathcal{ND}}\bw_h - \bw_0)}{0}{\Omega_i'}^2 \\ 
        & \le C \left(\znorm{\bw_h}{0}^2 + \znorm{\bw_0}{0}^2 \right) \le C \znorm{\bw_h}{0}^2 \le C \znorm{\bu_h}{0}^2.
    \end{aligned}
    \end{equation}

    Let $\bv_1 = \pi_{h}^{\mathcal{ND}} \bw_h - \bw_h$, $\bv_2 = \bw_h - \bw_0 = \bw_h - Q_H^{\mathcal{ND}} \bw_h$, and $\bv = \bv_1 + \bv_2$. Due to Lemma~\ref{lem:interpolation with theta_i}, the construction of $\theta_i$, Assumption~\ref{assump:finite covering}, and the triangle inequality, we have

    \begin{equation}\label{eq:est curl w_i 1}
        \begin{aligned}
        & \sum_{i=1}^N \eta_c \mnorm{\CURL \bw_i}{0}{\Omega_i'}^2 \le C \eta_c \znorm{\CURL \bv}{0}^2 + C \eta_c \sum_{i=1}^N \delta_i^{-2} \mnorm{\bv}{0}{\Omega_{i, \delta}}^2 \\
        & \le C \eta_c \znorm{\CURL \bv}{0}^2 + C \eta_c \sum_{i=1}^N \delta_i^{-2} \mnorm{\bv_1}{0}{\Omega_{i, \delta}}^2 + C \eta_c \sum_{i=1}^N \delta_i^{-2} \mnorm{\bv_2}{0}{\Omega_{i, \delta}}^2\\
        & := E_1 + E_2 + E_3.
        \end{aligned}
    \end{equation}

    We first consider $E_1$. By using the triangle inequality, Lemma~\ref{lem:L2 projection}, \eqref{eq:commuting properties}, \eqref{eq:cochain local stability}, and \eqref{eq:ND decomp est 2}, we obtain 
    \begin{equation}\label{eq:est E1}
        \begin{aligned}
        E_1 & = C \eta_c \znorm{\CURL \bv}{0}^2 \le C \eta_c\znorm{\CURL \left(\pi_{h}^{\mathcal{ND}} \bw_h\right)}{0}^2 + C \eta_c\znorm{\CURL \left(Q_H^{\mathcal{ND}} \bw_h\right)}{0}^2 \\
        & \le C \eta_c\znorm{\pi_h^{\mathcal{RT}}\left(\CURL \bw_h\right)}{0}^2 + C \eta_c\znorm{\bm{\nabla} \bw_h}{0}^2 \\
        & \le C \eta_c \znorm{\CURL \bw_h}{0}^2 + C \eta_c \znorm{\bm{\nabla} \bw_h}{0}^2 \\
        & \le C \eta_c \znorm{\bm{\nabla} \bw_h}{0}^2 \le C  \left( \eta_c \znorm{\bu_h}{0}^2 + \eta_c \znorm{\CURL \bu_h}{0}^2 \right) \le C a_c(\bu_h, \bu_h).
        \end{aligned}
    \end{equation}

    Regarding $E_2$, the following estimate holds from an error estimate, the finite covering property in Assumption~\ref{assump:finite covering}, and \eqref{eq:ND decomp est 2}:
    \begin{equation}\label{eq:est E2}
        \begin{aligned}
        E_2 & = C \eta_c\sum_{i=1}^N \delta_i^{-2} \mnorm{\bv_1}{0}{\Omega_{i, \delta}} \le C \eta_c\sum_{i=1}^N h^{-2} \mnorm{\pi_{h}^{\mathcal{ND}} \bw_h - \bw_h}{0}{\Omega_i'}^2\\
        & \le C \eta_c\znorm{\bm{\nabla} \bw_h}{0}^2 \le C \left( \eta_c\znorm{\bu_h}{0}^2 + \eta_c\znorm{\CURL \bu_h}{0}^2 \right) \le C a_c(\bu_h, \bu_h).
        \end{aligned}
    \end{equation}

    We finally estimate $E_3$. Lemma~\ref{lem:est layer Omega_i} implies 
    \begin{equation}\label{eq:est E3}
        \begin{aligned}
        E_3 & = C \eta_c \sum_{i=1}^N \delta_i^{-2} \mnorm{\bv_2}{0}{\Omega_{i, \delta}}^2 \\
        & \le C \eta_c  \sum_{i=1}^N \sum_{j \in I_i} \sum_{\substack{K \in \cT_H, \\ K \subset \Omega_j}} \left(1 + \dfrac{H_i}{\delta_i}\right) \seminorm{\bv_2}{1}{K}^2 + C \eta_c \sum_{i=1}^N \sum_{j \in I_i} \sum_{\substack{K \in \cT_H, \\ K \subset \Omega_j}} \dfrac{1}{\delta_i H_i} \mnorm{\bv_2}{0}{K}^2 \\
        & := E_{3, 1} + E_{3, 2}.
        \end{aligned}
    \end{equation}
    We have the following estimate from the triangle inequality, the inverse estimate, and Lemma~\ref{lem:L2 projection K}:
    \begin{equation}\label{eq:est v_2 h_1}
        \begin{aligned}
            \seminorm{\bv_2}{1}{K} & = \seminorm{\bw_h - Q_H^{\mathcal{ND}} \bw_h}{1}{K} \le 
            \seminorm{\bw_h}{1}{K} + \seminorm{Q_H^{\mathcal{ND}} \bw_h}{1}{K} \\
            & = \seminorm{\bw_h}{1}{K} + \seminorm{Q_H^{\mathcal{ND}} \bw_h - Q_{H, K}^0\bw_h}{1}{K} \\
            & \le \seminorm{\bw_h}{1}{K} + C H^{-1} \mnorm{Q_H^{\mathcal{ND}} \bw_h - Q_{H, K}^0 \bw_h}{0}{K} \\
            & \le \seminorm{\bw_h}{1}{K} + C H^{-1} \mnorm{Q_H^{\mathcal{ND}} \bw_h - \bw_h}{0}{K} 
             + C  H^{-1} \mnorm{ \bw_h - Q_{H, K}^0\bw_h}{0}{K} \\
            & \le C \seminorm{\bw_h}{1}{K} + CH^{-1} \mnorm{Q_H^{\mathcal{ND}} \bw_h - \bw_h}{0}{K}.
        \end{aligned}
    \end{equation}

    Let $\Xi = \displaystyle{\max_{1 \le i \le n} \left(\dfrac{H_i}{\delta_i}\right)}$. Then, by using \eqref{eq:est v_2 h_1}, Lemma~\ref{lem:L2 projection} and \eqref{eq:ND decomp est 2}, we have
    \begin{equation}\label{eq:est E31}
        \begin{aligned}
            E_{3, 1} & = C \eta_c \sum_{i=1}^N \sum_{j \in I_i} \sum_{\substack{K \in \cT_H, \\ K \subset \Omega_j}} \left(1 + \dfrac{H_i}{\delta_i}\right) \seminorm{\bv_2}{1}{K}^2 \le C \eta_c (1 + \Xi)\sum_{i=1}^N \sum_{j \in I_i} \sum_{\substack{K \in \cT_H, \\ K \subset \Omega_j}} \seminorm{\bv_2}{1}{K}^2\\
            & \le \eta_c (1 + \Xi) \left(C \sum_{i=1}^N \sum_{j \in I_i} \sum_{\substack{K \in \cT_H, \\ K \subset \Omega_j}} \seminorm{\bw_h}{1}{K}^2 + C H^{-2}\sum_{i=1}^N \sum_{j \in I_i} \sum_{\substack{K \in \cT_H, \\ K \subset \Omega_j}} \mnorm{Q_H^{\mathcal{ND}} \bw_h - \bw_h}{0}{K}^2\right) \\
            & \le \eta_c (1 + \Xi) \left(C \znorm{\bm{\nabla} \bw_h}{0}^2 + C H^{-2} \znorm{Q_H^{\mathcal{ND}} \bw_h - \bw_h}{0}^2 \right)\\
            & \le C \eta_c(1 + \Xi) \znorm{\bm{\nabla} \bw_h}{0}^2 \le C (1 + \Xi)\left( \eta_c\znorm{\bu_h}{0}^2 + \eta_c \znorm{\CURL \bu_h}{0}^2 \right) \\
            & \le C(1 + \Xi)a_c(\bu_h, \bu_h).
        \end{aligned}
    \end{equation}
    Moreover, from Assumption~\ref{assump:H}, Lemma~\ref{lem:L2 projection}, and \eqref{eq:ND decomp est 2}, we obtain
    \begin{equation}\label{eq:est E32}
        \begin{aligned}
        E_{3, 2} & = C \eta_c \sum_{i=1}^N \sum_{j \in I_i} \sum_{\substack{K \in \cT_H, \\ K \subset \Omega_j}} \dfrac{1}{\delta_i H_i} \mnorm{\bv_2}{0}{K}^2 \\
        & \le C \eta_c H^{-2} \sum_{i=1}^N \sum_{j \in I_i} \sum_{\substack{K \in \cT_H, \\ K \subset \Omega_j}} \dfrac{H_i}{\delta_i} \mnorm{Q_H^{\mathcal{ND}} \bw_h - \bw_h}{0}{K}^2 \\
        & \le C \eta_c \Xi H^{-2} \znorm{Q_H^{\mathcal{ND}} \bw_h - \bw_h}{0}^2 \le C \eta_c \Xi \znorm{\bm{\nabla} \bw_h}{0}^2 \\
        & \le C \Xi \left( \eta_c \znorm{\bu_h}{0}^2 + \eta_c \znorm{\CURL \bu_h}{0}^2 \right) \le C \Xi a_c(\bu_h, \bu_h).
        \end{aligned}
    \end{equation}

    Thus, by using \eqref{eq:est curl w_i 1}, \eqref{eq:est E1}, \eqref{eq:est E2}, \eqref{eq:est E3}, \eqref{eq:est E31}, and \eqref{eq:est E32}, the following estimate holds:
    \begin{equation}\label{eq:est curl w_i final}
            \sum_{i=1}^N \eta_c \mnorm{\CURL \bw_i}{0}{\Omega_i'}^2 \le C \max_{1 \le i \le N}\left(1 + \dfrac{H_i}{\delta_i}\right) a_c(\bu_h, \bu_h).
    \end{equation}

    Combining \eqref{eq:est w_i} and \eqref{eq:est curl w_i final}, we have
    \[ 
        \sum_{i=1}^N a_c(\bw_i, \bw_i) \le C \left(\max_{1 \le i \le N}\left(1 + \dfrac{H_i}{\delta_i}\right)\right)a_c(\bu_h, \bu_h).
    \] 
\end{proof}

\begin{prop}\label{prop:est tu_i}
    Consider $\widetilde{\bu}_i$ defined in \eqref{eq:ND stable decomposition 2} and assume that $\eta_c \le 1$. Then, we have
    \[ 
        \sum_{i=1}^N a_c(\widetilde{\bu}_i, \widetilde{\bu}_i) \le a_c(\bu_h, \bu_h),
    \] 
    where the constant $C$ is independent from $N$, $h$, $H_i$, $\delta_i$, and $\eta_c$.
\end{prop}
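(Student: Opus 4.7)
The plan is to split $a_c(\widetilde{\bu}_i,\widetilde{\bu}_i)$ into its mass and curl contributions and bound each separately, exploiting the fact that the oscillation remainder $\widetilde{\bu}_h$ is controlled in the $h^{-1}$-weighted $L^2$ norm through \eqref{eq:ND decomp est 2}. This is precisely what lets the $\delta_i^{-1}$ generated by the partition of unity be absorbed without producing an $H_i/\delta_i$ factor, so the bound will be uniform in the overlap ratio --- the reason this proposition carries no such factor, in contrast to Propositions~\ref{prop:est chi_i} and~\ref{prop:est w_i}.

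For the $L^2$ piece I would invoke the third estimate of Lemma~\ref{lem:interpolation with theta_i} together with $0\le\theta_i\le 1$ and the finite covering Assumption~\ref{assump:finite covering} to get $\sum_i \mnorm{\widetilde{\bu}_i}{0}{\Omega_i'}^2 \le C\znorm{\widetilde{\bu}_h}{0}^2$, and then apply \eqref{eq:ND decomp est 1} to conclude $\znorm{\widetilde{\bu}_h}{0}^2 \le C\znorm{\bu_h}{0}^2 \le C\,a_c(\bu_h,\bu_h)$.

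For the curl piece I would again use Lemma~\ref{lem:interpolation with theta_i} and then expand via the product rule
\[
    \CURL(\theta_i\widetilde{\bu}_h) = \theta_i\,\CURL\widetilde{\bu}_h + \nabla\theta_i \times \widetilde{\bu}_h,
\]
so that $\mnorm{\CURL(\theta_i\widetilde{\bu}_h)}{0}{\Omega_i'}^2 \le C\bigl(\mnorm{\CURL\widetilde{\bu}_h}{0}{\Omega_i'}^2 + \delta_i^{-2}\mnorm{\widetilde{\bu}_h}{0}{\Omega_i'}^2\bigr)$ by \eqref{eq:property theta_i}. The crucial step is to observe that $\delta_i \ge c\,h$, since the overlap consists of layers of fine elements; this gives $\delta_i^{-2}\mnorm{\widetilde{\bu}_h}{0}{\Omega_i'}^2 \le C\mnorm{h^{-1}\widetilde{\bu}_h}{0}{\Omega_i'}^2$, and the N\'{e}d\'{e}lec inverse inequality $\mnorm{\CURL\widetilde{\bu}_h}{0}{K} \le Ch_K^{-1}\mnorm{\widetilde{\bu}_h}{0}{K}$ handles the remaining curl term in the same way. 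Summing with finite covering and applying \eqref{eq:ND decomp est 2} then yields
\[
    \eta_c\sum_{i=1}^N \mnorm{\CURL\widetilde{\bu}_i}{0}{\Omega_i'}^2 \le C\eta_c\,\znorm{h^{-1}\widetilde{\bu}_h}{0}^2 \le C\eta_c\bigl(\znorm{\CURL\bu_h}{0}^2 + \znorm{\bu_h}{0}^2\bigr) \le C\,a_c(\bu_h,\bu_h).
\]

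The argument is largely mechanical once the structural observation above is made. I do not anticipate a real obstacle here: the only mildly subtle point is recognizing that \eqref{eq:ND decomp est 2} provides exactly the right $h^{-1}$ control on $\widetilde{\bu}_h$ to neutralize the $\delta_i^{-1}$ weight coming from $\nabla\theta_i$, which is why this is the cleanest of the three local-component estimates in the $\HCurl$ case.
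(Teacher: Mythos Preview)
Your proposal is correct and follows essentially the same route as the paper's proof: both bound the $L^2$ part via Lemma~\ref{lem:interpolation with theta_i}, $0\le\theta_i\le 1$, finite covering, and \eqref{eq:ND decomp est 1}, and both bound the curl part by expanding $\CURL(\theta_i\widetilde{\bu}_h)$, replacing $\delta_i^{-2}$ by $Ch^{-2}$, absorbing $\znorm{\CURL\widetilde{\bu}_h}{0}$ via the inverse inequality, and concluding with \eqref{eq:ND decomp est 2}. One trivial slip: for $\widetilde{\bu}_i\in\ND$ you want the \emph{first} estimate of Lemma~\ref{lem:interpolation with theta_i}, not the third (which is the Raviart--Thomas one).
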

\begin{proof}
From Lemma~\ref{lem:interpolation with theta_i}, \eqref{eq:property theta_i}, Assumption~\ref{assump:finite covering}, the inverse inequality, \eqref{eq:ND decomp est 1}, and \eqref{eq:ND decomp est 2}, we have
\begin{equation}\label{eq:est tu_i}
    \sum_{i=1}^N \mnorm{\widetilde{\bu}_i}{0}{\Omega_i'}^2 \le C\sum_{i=1}^N \mnorm{\theta_i \widetilde{\bu}_h}{0} {\Omega_i'}^2 \le C \znorm{\widetilde{\bu}_h}{0}^2 \le C \znorm{\bu_h}{0}^2
\end{equation}
and
\begin{equation}\label{eq:est curl tu_i}
    \begin{aligned}
        \sum_{i=1}^N \eta_c \mnorm{\CURL \widetilde{\bu}_i}{0}{\Omega_i'}^2 & \le C \eta_c \sum_{i=1}^N \delta_i^{-2} \mnorm{\widetilde{\bu}_h}{0}{\Omega_i'}^2 + C \eta_c \znorm{\CURL \widetilde{\bu}_h}{0}^2 \\
        & \le C \eta_c h^{-2} \znorm{\widetilde{\bu}_h}{0}^2 \le C \left( \eta_c \znorm{\bu_h}{0}^2 + \eta_c \znorm{\CURL \bu_h}{0}^2\right) \\
        & \le C a_c(\bu_h, \bu_h).
        \end{aligned}
\end{equation}
We therefore have
\[ 
    \sum_{i=1}^N a_c(\widetilde{\bu}_i, \widetilde{\bu}_i) \le C a_c(\bu_h, \bu_h).
\] 
\end{proof}

\begin{lem}\label{lem:ND local estimate}
    Assume that the constant $\eta_c$ in \eqref{eq:H(curl) bilinear form} is less than or equal to one. Then, we have the following estimate for the local components in \eqref{eq:ND stable decomposition 1}:
    \begin{equation}\label{eq:ND local estimate}
        \sum_{i=1}^N a_c(\bu_i, \bu_i) \le C \left(\max_{1 \le i \le N}\left(1 + \dfrac{H_i}{\delta_i}\right)\right)a_c(\bu_h, \bu_h),
    \end{equation}
    where the constant $C$ does not depend on $N$, $h$, $H_i$, $\delta_i$, and $\eta_c$.
\end{lem}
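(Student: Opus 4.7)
The plan is to assemble the three propositions that have already been established. By construction \eqref{eq:ND stable decomposition 1}, each local component splits as $\bu_i = \nabla \chi_i + \bw_i + \widetilde{\bu}_i$, so the work is essentially to bound $a_c(\bu_i,\bu_i)$ in terms of the three pieces, sum over $i$, and invoke Propositions~\ref{prop:est chi_i}, \ref{prop:est w_i}, and \ref{prop:est tu_i}.

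First, I expand the bilinear form: since $a_c(\bv,\bv)=\znorm{\bv}{0}^2+\eta_c\znorm{\CURL\bv}{0}^2$, and $\CURL(\nabla\chi_i)=0$, I apply the elementary inequality $\znorm{\bm{a}+\bm{b}+\bm{c}}{0}^2 \le 3(\znorm{\bm{a}}{0}^2+\znorm{\bm{b}}{0}^2+\znorm{\bm{c}}{0}^2)$ to the $L^2$-term and $\znorm{\bm{a}+\bm{b}}{0}^2 \le 2(\znorm{\bm{a}}{0}^2+\znorm{\bm{b}}{0}^2)$ to the curl-term. This yields, on each overlapping subdomain,
\[
a_c(\bu_i,\bu_i) \le 3\bigl(a_c(\nabla\chi_i,\nabla\chi_i)+a_c(\bw_i,\bw_i)+a_c(\widetilde{\bu}_i,\widetilde{\bu}_i)\bigr).
\]
Here I use that the $L^2$ and curl norms involved refer only to $\Omega_i'$, which is automatic since each of $\nabla\chi_i$, $\bw_i$, $\widetilde{\bu}_i$ is supported in $\overline{\Omega_i'}$ by the partition of unity construction.

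Next, I sum over $i = 1,\ldots,N$ and bound each of the three resulting sums by the corresponding proposition: Proposition~\ref{prop:est chi_i} for the gradient part, Proposition~\ref{prop:est w_i} for the regular (vector) part, and Proposition~\ref{prop:est tu_i} for the remainder. All three produce a bound of the form $C\max_i(1+H_i/\delta_i)\,a_c(\bu_h,\bu_h)$ (the remainder bound is in fact independent of $H_i/\delta_i$, but absorbing it into the maximum is harmless). Adding the three contributions and collecting constants gives \eqref{eq:ND local estimate}.

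There is no genuine obstacle at this stage, since all the delicate analysis — in particular the interplay between the cochain projection $\pi_h^{\mathcal{ND}}$, the Scott--Zhang interpolant, the $L^2$-projection $Q_H^{\mathcal{ND}}$, and the layer estimate of Lemma~\ref{lem:est layer Omega_i} — was handled inside Proposition~\ref{prop:est w_i}. The only point to verify carefully in assembly is that the constants coming from the three propositions are independent of $N$, $h$, $H_i$, $\delta_i$, and $\eta_c$, so the finite-covering property from Assumption~\ref{assump:finite covering} remains the only combinatorial input, and $\eta_c \le 1$ is used implicitly only to ensure that the $L^2$-terms are absorbed cleanly into $a_c(\bu_h,\bu_h)$.
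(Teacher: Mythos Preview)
Your proposal is correct and follows exactly the paper's approach: the paper's proof simply cites Propositions~\ref{prop:est chi_i}, \ref{prop:est w_i}, and \ref{prop:est tu_i}, and you have spelled out the routine triangle-inequality step needed to combine them. Nothing more is required.
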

\begin{proof}
Based on Propositions~\ref{prop:est chi_i}, \ref{prop:est w_i}, and \ref{prop:est tu_i}, we have \eqref{eq:ND local estimate}.
\end{proof}

We finally have an estimate of the condition number for our $\HCurl$ model problem. 
\begin{thm}\label{thm:ND condition number estimate}
    Let $\eta_c \le 1$. We then have the following estimate:
    \begin{equation}\label{eq:cond est ND}
        \kappa(M_c^{-1}A_c) \le C\max_{1 \le i \le N}\left(1 + \dfrac{H_i}{\delta_i}\right),
    \end{equation}
    where the constant $C$ does not depend on the mesh sizes, $H_i$, $\delta_i$, $\eta_c$, and the number of subdomains but may depend on $N_0$.
\end{thm}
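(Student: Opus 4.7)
The plan is a direct assembly of the abstract Schwarz framework (Lemmas~\ref{lem:smallest eigenvalue ND} and \ref{lem:largest eigenvalue}) with the decomposition analysis just completed in Lemmas~\ref{lem:ND coarse estimate} and \ref{lem:ND local estimate}. Since both building blocks are already in hand, the theorem follows from packaging them into the standard two-sided eigenvalue bound.

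First I would recall the explicit decomposition $\bu_h = \sum_{i=0}^{N} \bu_i$ given by \eqref{eq:ND stable decomposition 1}--\eqref{eq:ND stable decomposition 2}, and observe that by construction $\bu_0 \in \NDH$ and $\bu_i \in \ND^{(i)}$ for $i \ge 1$, so this decomposition is admissible in the sense required by Lemma~\ref{lem:smallest eigenvalue ND}. Adding the coarse estimate \eqref{eq:ND coarse estimate} from Lemma~\ref{lem:ND coarse estimate} to the local estimate \eqref{eq:ND local estimate} from Lemma~\ref{lem:ND local estimate} yields
\[
\sum_{i=0}^{N} a_c(\bu_i,\bu_i) \;\le\; C\max_{1\le i\le N}\left(1+\dfrac{H_i}{\delta_i}\right) a_c(\bu_h,\bu_h),
\]
so the constant $C_c^2$ in Lemma~\ref{lem:smallest eigenvalue ND} can be taken as $C\max_{i}(1+H_i/\delta_i)$. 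Consequently the smallest eigenvalue of $M_c^{-1}A_c$ is bounded below by $C^{-1}\bigl(\max_{i}(1+H_i/\delta_i)\bigr)^{-1}$.

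For the upper bound on the largest eigenvalue, I simply invoke Lemma~\ref{lem:largest eigenvalue}, which uses only the finite covering Assumption~\ref{assump:finite covering} and gives $\lambda_{\max}(M_c^{-1}A_c) \le N_0 + 1$. Taking the ratio of the two eigenvalue bounds gives
\[
\kappa(M_c^{-1}A_c) = \dfrac{\lambda_{\max}(M_c^{-1}A_c)}{\lambda_{\min}(M_c^{-1}A_c)} \;\le\; C(N_0+1)\max_{1\le i\le N}\left(1+\dfrac{H_i}{\delta_i}\right),
\]
which is exactly \eqref{eq:cond est ND}, absorbing the $N_0$-dependence into the constant $C$ (as permitted by the theorem statement).

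There is no real obstacle here: all the delicate work—the Hiptmair--Pechstein decomposition, the handling of the oscillation term $\widetilde{\bu}_h$ through the inverse inequality, and the layer estimate through Lemma~\ref{lem:est layer Omega_i}—was absorbed into Propositions~\ref{prop:est chi_i}--\ref{prop:est tu_i} and then Lemma~\ref{lem:ND local estimate}. The only thing worth double-checking is that the coarse decomposition $\bu_0 = \nabla\chi_0 + \bw_0$ does indeed lie in $\NDH$ (which it does since $\nabla\chi_0 \in \NDH$ by the inclusion $\nabla\SH \subset \NDH$ for Scott--Zhang images in $\SH$, and $\bw_0 \in \NDH$ by definition of $Q_H^{\mathcal{ND}}$) and that the telescoping $\sum_{i=1}^{N} \theta_i (\chi_h - \chi_0) = \chi_h - \chi_0$ together with the analogous identity for $\bw$ recovers $\bu_h$ after adding $\bu_0$. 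The proof is therefore a one-line citation of the three preceding results.
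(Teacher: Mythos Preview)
Your proposal is correct and follows exactly the paper's approach: the paper's proof is the single sentence ``We have \eqref{eq:cond est ND} from Lemmas~\ref{lem:smallest eigenvalue ND}, \ref{lem:largest eigenvalue}, \ref{lem:ND coarse estimate}, and \ref{lem:ND local estimate},'' and your write-up is simply a more explicit unpacking of that citation.
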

\begin{proof}
    We have \eqref{eq:cond est ND} from Lemmas~\ref{lem:smallest eigenvalue ND}, \ref{lem:largest eigenvalue}, \ref{lem:ND coarse estimate}, and \ref{lem:ND local estimate}.
\end{proof}

\begin{cor}
    If the first Betti number of the domain $\Omega$ vanishes, we have \eqref{eq:cond est ND} in Theorem~\ref{thm:ND condition number estimate} without the assumption $\eta_c \le 1$.
\end{cor}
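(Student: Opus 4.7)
The plan is to pinpoint exactly where the hypothesis $\eta_c \le 1$ was consumed in the proof of Theorem~\ref{thm:ND condition number estimate} and to eliminate it by appealing to a sharper variant of the Hiptmair--Pechstein decomposition that is available precisely when the first Betti number of $\Omega$ vanishes. Under $b_1(\Omega)=0$, the closed subspace $\CURL(\HCurlD)\subset\HDivD$ admits a bounded regularized right inverse of $\CURL$ into $(H^1(\Omega))^3$; combining this inverse with the cochain projection $\pi_h^{\mathcal{ND}}$ exactly as in \cite{HP:2019:RegularDecomp, MR4143285, MR4398318} yields, for every $\bu_h\in\ND$, a decomposition of the form \eqref{eq:ND regular decomp} satisfying the same $L^2$ bound \eqref{eq:ND decomp est 1} but with the improved estimate
\begin{equation*}
    \znorm{\bm{\nabla}\bw_h}{0} + \znorm{h^{-1}\widetilde{\bu}_h}{0} \le C\,\znorm{\CURL \bu_h}{0},
\end{equation*}
i.e.\ without the additive $\znorm{\bu_h}{0}$ term on the right-hand side of \eqref{eq:ND decomp est 2}.

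Armed with this sharpened decomposition, I would retrace the coarse estimate of Lemma~\ref{lem:ND coarse estimate} and the local estimates of Propositions~\ref{prop:est w_i} and \ref{prop:est tu_i}. Inspection shows that the constraint $\eta_c\le 1$ is invoked only at the handful of steps where a quantity of the form $\eta_c\znorm{\bm{\nabla}\bw_h}{0}^2$ or $\eta_c\znorm{h^{-1}\widetilde{\bu}_h}{0}^2$ is expanded via \eqref{eq:ND decomp est 2} into $C\eta_c(\znorm{\CURL\bu_h}{0}^2+\znorm{\bu_h}{0}^2)$, and the spurious $\eta_c\znorm{\bu_h}{0}^2$ then has to be absorbed into $a_c(\bu_h,\bu_h)$; this is precisely what forces $\eta_c\le 1$. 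With the improved bound these lines are replaced by $C\eta_c\znorm{\CURL\bu_h}{0}^2\le C\,a_c(\bu_h,\bu_h)$ directly, with no restriction on $\eta_c$. Every remaining estimate in the proof relies only on the $L^2$ control \eqref{eq:ND decomp est 1}, which is uniformly dominated by the mass contribution $\znorm{\bu_h}{0}^2\le a_c(\bu_h,\bu_h)$ regardless of the size of $\eta_c$.

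Once the refined coarse and local bounds are in hand, the conclusion follows by the same Schwarz framework: Lemma~\ref{lem:largest eigenvalue} is a coloring argument that is insensitive to $\eta_c$, and Lemma~\ref{lem:smallest eigenvalue ND} applied to the decomposition $\bu_h=\sum_{i=0}^N\bu_i$ of \eqref{eq:ND stable decomposition 1}--\eqref{eq:ND stable decomposition 2} reproduces \eqref{eq:cond est ND} verbatim in the full range $\eta_c>0$. The main obstacle I anticipate is purely bibliographic rather than technical: one must justify that the topological hypothesis $b_1(\Omega)=0$ really does remove the $\znorm{\bu_h}{0}$ term from \eqref{eq:ND decomp est 2}. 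I would do this by referring to the regularized Poincaré-type operators constructed by Hiptmair and Pechstein for simply connected Lipschitz domains, noting that their continuity from $\CURL(\HCurlD)$ into $(H^1(\Omega))^3$ is exactly the analytical input that, composed with the cochain projections of \cite{MR3246803} already used in Lemma~\ref{lem:HP decomp ND}, delivers the sharper discrete estimate without modifying any other ingredient of the proof.
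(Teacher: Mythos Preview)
Your proposal is correct and follows essentially the same approach as the paper: both observe that when $b_1(\Omega)=0$ the Hiptmair--Pechstein decomposition sharpens so that the right-hand side of \eqref{eq:ND decomp est 2} reduces to $C\znorm{\CURL\bu_h}{0}$ alone, which removes the only places in Lemma~\ref{lem:ND coarse estimate} and Propositions~\ref{prop:est w_i}, \ref{prop:est tu_i} where $\eta_c\le 1$ was needed. The paper's own proof is a two-sentence pointer to \cite[Section~5.1]{HP:2019:RegularDecomp}; your version simply makes the mechanism explicit.
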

\begin{proof}
    If the first Betti number of the domain $\Omega$ vanishes, we have a more favorable bound in \eqref{eq:ND decomp est 2}, i.e., the right hand side can be replaced by the curl term only. Thus, we can have \eqref{eq:cond est ND} in Theorem~\ref{thm:ND condition number estimate} without the assumption $\eta_c \le 1$. For more detail, see \cite[Section 5.1]{HP:2019:RegularDecomp}.
\end{proof}
\subsection{Condition number estimate for $\HDiv$} Like the $\HCurl$ case, we consider the following decomposition for any $\bp_h \in \RT$ based on Lemma~\ref{lem:HP decomp RT}:
\begin{equation}\label{eq:RT stable decomposition 1}
    \begin{aligned}
        \bp_0 & := \CURL \bm{\rho}_0 + \br_0, \\
        \bp_i & := \CURL \bm{\rho}_i + \br_i + \widetilde{\bp}_i, \quad i = 1, 2, \cdots, N, 
    \end{aligned}
\end{equation}
where
\begin{equation}\label{eq:RT stable decomposition 2}
    \begin{aligned}
        \bm{\rho}_0 & = Q_H^{\mathcal{ND}} \bm{\sigma}_h, \\
        \br_0 & = Q_H^{\mathcal{RT}} \br_h, \\
        \bm{\rho}_i & = \Pi_h^{\mathcal{ND}} (\theta_i(\pi_h^{\mathcal{ND}}\bm{\sigma}_h - \bm{\rho}_0)) + \Pi_h^{\mathcal{ND}}\left(\theta_i \widetilde{\bm{\rho}}_h \right), \\
        \br_i & = \Pi_{h}^{\mathcal{RT}}(\theta_i(\pi_{h}^{\mathcal{RT}} \br_h - \br_0)), \\
        \widetilde{\bp}_i & = \Pi_{h}^{\mathcal{RT}}(\theta_i \widetilde{\bp}_h).
    \end{aligned}
\end{equation}
Here, $\bm{\rho}_h = \nabla \mu_h + \Pi_h^{\mathcal{ND}} \bm{\sigma}_h + \widetilde{\bm{\rho}}_h$ is given based on Lemma~\ref{lem:HP decomp ND}. We note that the operators $\Pi_h^{\mathcal{ND}}$ and $\Pi_h^{\mathcal{RT}}$ are introduced in Section~\ref{sec:model problems and finite elements}. We also remark that the partition of unity set $\left\{ \theta_i \right\}$ is constructed in Section~\ref{sec:overlapping schwarz methods} and the $L^2-$projection operators are defined in Section~\ref{subsec:preliminaries}. In addition, the cochain projections $\pi_h^{\mathcal{ND}}$ and $\pi_h^{\mathcal{RT}}$ are introduced in Section~\ref{subsec:regular decomposition}. Obviously, we have $\bp_0 \in \RTH$, $\bp_i \in \RT^{(i)}$, and $\bp_h = \sum_{i=0}^N \bp_i$. Similarly, we consider estimates for the coarse and the local components.

We first consider the stability of $\bp_0$ in \eqref{eq:RT stable decomposition 1}. We recall that we have the constant $\eta_d$ in the bilinear form \eqref{eq:H(div) bilinear form}.
\begin{lem}\label{lem:RT coarse estimate}
    Assume that the constant $\eta_d$ in \eqref{eq:H(div) bilinear form} is less than or equal to one. Then, we have the following estimate for the coarse component $\bp_0$ in \eqref{eq:RT stable decomposition 1}:
    \begin{equation}\label{eq:RT coarse estimate}
        a_d(\bp_0, \bp_0) \le C a_d(\bp_h, \bp_h),
    \end{equation}
    where the constant $C$ does not depend on $N$, $h$, $H_i$, $\delta_i$, and $\eta_d$.
\end{lem}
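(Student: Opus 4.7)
The plan is to mirror the structure of the proof of Lemma~\ref{lem:ND coarse estimate}, but with the extra twist that the coarse object $\bm{\rho}_0$ is built from a \emph{nested} application of the Hiptmair--Pechstein decompositions. Recall that $\bm{\rho}_h$ comes from the face-element decomposition of $\bp_h$ (Lemma~\ref{lem:HP decomp RT}), and then $\bm{\sigma}_h \in (\Sh)^3$ comes from applying the edge-element decomposition (Lemma~\ref{lem:HP decomp ND}) to the N\'ed\'elec field $\bm{\rho}_h$. This is the only new wrinkle compared with the $\HCurl$ case; everything else is a two-term split of $\bp_0 = \CURL \bm{\rho}_0 + \br_0$.

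First, I would bound $a_d(\bp_0,\bp_0) = \eta_d\|\DIV \bp_0\|_0^2 + \|\bp_0\|_0^2$ term by term, using the key observation that $\DIV(\CURL \bm{\rho}_0) = 0$, so $\DIV \bp_0 = \DIV \br_0 = \DIV(Q_H^{\mathcal{RT}} \br_h)$. Then $\|\bp_0\|_0 \le \|\CURL \bm{\rho}_0\|_0 + \|\br_0\|_0$ by the triangle inequality, so it suffices to control three quantities: $\|\CURL \bm{\rho}_0\|_0$, $\|\br_0\|_0$, and $\eta_d^{1/2}\|\DIV \br_0\|_0$.

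The $\br_0$ terms are routine and parallel exactly the treatment of $\bw_0$ in Lemma~\ref{lem:ND coarse estimate}: by the $L^2$-stability of $Q_H^{\mathcal{RT}}$ and \eqref{eq:RT decomp est 1}, $\|\br_0\|_0 \le \|\br_h\|_0 \le C\|\bp_h\|_0$; and by Lemma~\ref{lem:L2 projection} together with \eqref{eq:RT decomp est 2},
\begin{equation*}
\eta_d \|\DIV \br_0\|_0^2 = \eta_d \|\DIV(Q_H^{\mathcal{RT}} \br_h)\|_0^2 \le C\eta_d |\br_h|_1^2 \le C\eta_d\left(\|\DIV \bp_h\|_0^2 + \|\bp_h\|_0^2\right) \le C\, a_d(\bp_h,\bp_h),
\end{equation*}
where the last inequality uses $\eta_d\le 1$.

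The main step—and the only genuinely new piece—is controlling $\|\CURL \bm{\rho}_0\|_0 = \|\CURL(Q_H^{\mathcal{ND}} \bm{\sigma}_h)\|_0$. By Lemma~\ref{lem:L2 projection} this is bounded by $C|\bm{\sigma}_h|_1$, so we must show $|\bm{\sigma}_h|_1 \le C \|\bp_h\|_0$. For this I chain the two decompositions: applying Lemma~\ref{lem:HP decomp ND} to $\bm{\rho}_h\in\ND$ gives $\|\nabla \bm{\sigma}_h\|_0 \le C(\|\CURL \bm{\rho}_h\|_0 + \|\bm{\rho}_h\|_0)$, and then \eqref{eq:RT decomp est 1} from Lemma~\ref{lem:HP decomp RT} gives $\|\CURL \bm{\rho}_h\|_0 + \|\bm{\rho}_h\|_0 \le C\|\bp_h\|_0$. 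Hence $\|\CURL \bm{\rho}_0\|_0 \le C\|\bp_h\|_0 \le C\sqrt{a_d(\bp_h,\bp_h)}$. Combining the three bounds yields \eqref{eq:RT coarse estimate}. The only subtlety to watch for is that the bound for $\bm{\sigma}_h$ from Lemma~\ref{lem:HP decomp ND} involves both $\CURL \bm{\rho}_h$ and $\bm{\rho}_h$; this is exactly why we need the $\|\bm{\rho}_h\|_0$ term in \eqref{eq:RT decomp est 1}, and why no condition on the topology is needed here.
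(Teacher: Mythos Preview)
Your proof is correct and follows essentially the same approach as the paper's own proof: the split $\bp_0 = \CURL\bm{\rho}_0 + \br_0$, the treatment of the $\br_0$ terms via the $L^2$-stability of $Q_H^{\mathcal{RT}}$ and Lemma~\ref{lem:L2 projection}, and the chained use of Lemma~\ref{lem:L2 projection}, \eqref{eq:ND decomp est 2} (applied to $\bm{\rho}_h$), and \eqref{eq:RT decomp est 1} to bound $\|\CURL\bm{\rho}_0\|_0$ all match the paper exactly. You have simply spelled out the nested-decomposition step that the paper compresses into the phrase ``with a similar argument to \eqref{eq:est curl w_0}.''
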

\begin{proof}
    Based on the decomposition $\bp_0 = \CURL \bm{\rho}_0 + \br_0$, we consider each term one by one.
    \begin{itemize}
        \item Term $\bm{\rho}_0$:
        
        With a similar argument to \eqref{eq:est curl w_0} in Lemma~\ref{lem:ND coarse estimate}, \eqref{eq:ND decomp est 2}, and \eqref{eq:RT decomp est 1}, we have
        \begin{equation}\label{eq:est curl rho_0}
            \begin{aligned}
                a_d(\CURL \bm{\rho}_0, \CURL \bm{\rho}_0) & = \znorm{\CURL \bm{\rho}_0}{0}^2 \le C \znorm{\bp_h}{0}^2.
            \end{aligned}
        \end{equation}

        \item Term $\br_0$:
        
        From the projection property and \eqref{eq:RT decomp est 1}, we obtain
        \begin{equation}\label{eq:est r_0}
            \znorm{\br_0}{0}^2 = \znorm{Q_H^{\mathcal{RT}} \br_h}{0}^2 \le \znorm{\br_h}{0}^2 \le C \znorm{\bp_h}{0}^2.
        \end{equation}

        By using Lemma~\ref{lem:L2 projection} and \eqref{eq:RT decomp est 2}, the following estimate holds:
        \begin{equation}\label{eq:est div r_0}
            \begin{aligned}
            \eta_d \znorm{\DIV \br_0}{0}^2 & = \eta_d \znorm{\DIV \left(Q_H^{\mathcal{RT}} \br_h\right)}{0}^2 \le C \eta_d \znorm{\bm{\nabla} \br_h}{0}^2 \\
            & \le C \left(\eta_d \znorm{\DIV \bp_h}{0}^2 + \eta_d\znorm{\bp_h}{0}^2\right) \le C a_d(\bp_h, \bp_h).
            \end{aligned}
        \end{equation}
    \end{itemize}
    We therefore have \eqref{eq:RT coarse estimate} from \eqref{eq:est curl rho_0}, \eqref{eq:est r_0}, and \eqref{eq:est div r_0}.
\end{proof}

We next consider three propositions to estimate each term associated with the local components in \eqref{eq:RT stable decomposition 1} separately. 

\begin{prop}\label{prop:est rho_i}
Assume that $\eta_d \le 1$. Then, the term $\rho_i$ introduced in \eqref{eq:RT stable decomposition 2} has the following estimate:
\[ 
    \sum_{i=1}^N a_d(\CURL \bm{\rho}_i, \CURL \bm{\rho}_i) \le C \max_{1 \le i \le N}\left( 1 + \dfrac{H_i}{\delta_i}\right) a_d(\bp_h, \bp_h),
\] 
where the constant $C$ is independent from $N$, $h$, $H_i$, $\delta_i$, and $\eta_d$.
\end{prop}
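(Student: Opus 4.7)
My plan is to mimic the structure of Propositions~\ref{prop:est w_i} and \ref{prop:est tu_i} applied to the two constituent pieces of $\bm{\rho}_i$. The first observation is a simplification: since $\DIV \CURL = 0$, we have
\[
    a_d(\CURL \bm{\rho}_i, \CURL \bm{\rho}_i) = \eta_d \znorm{\DIV \CURL \bm{\rho}_i}{0}^2 + \znorm{\CURL \bm{\rho}_i}{0}^2 = \znorm{\CURL \bm{\rho}_i}{0,\Omega_i'}^2,
\]
so the proposition reduces to controlling $\sum_{i=1}^{N} \znorm{\CURL \bm{\rho}_i}{0,\Omega_i'}^2$. Next I would split $\bm{\rho}_i = \bm{\rho}_i^{(1)} + \bm{\rho}_i^{(2)}$ with
\[
    \bm{\rho}_i^{(1)} := \Pi_h^{\mathcal{ND}}\bigl(\theta_i(\pi_h^{\mathcal{ND}}\bm{\sigma}_h - \bm{\rho}_0)\bigr), \qquad
    \bm{\rho}_i^{(2)} := \Pi_h^{\mathcal{ND}}(\theta_i \widetilde{\bm{\rho}}_h),
\]
and use the triangle inequality to treat each piece separately.

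For $\bm{\rho}_i^{(1)}$, the construction is structurally identical to the $\bw_i$ of Proposition~\ref{prop:est w_i}, with $\bm{\sigma}_h$ playing the role of $\bw_h$ and $\bm{\rho}_0 = Q_H^{\mathcal{ND}} \bm{\sigma}_h$ playing the role of $\bw_0$. Thus the entire $E_1 + E_2 + E_3$ decomposition carries over verbatim after replacing $\bw_h$ by $\bm{\sigma}_h$. The required input estimates
\[
    \znorm{\bm{\sigma}_h}{0} \le C \znorm{\bp_h}{0}, \qquad \znorm{\bm{\nabla}\bm{\sigma}_h}{0} \le C \znorm{\bp_h}{0}
\]
follow by chaining the edge-element bounds \eqref{eq:ND decomp est 1}--\eqref{eq:ND decomp est 2} applied to $\bm{\rho}_h$ with the face-element bound \eqref{eq:RT decomp est 1}, since the right-hand side of \eqref{eq:ND decomp est 2} reads $\znorm{\CURL\bm{\rho}_h}{0} + \znorm{\bm{\rho}_h}{0}$ and both of these are controlled by $\znorm{\bp_h}{0}$. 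Using $\znorm{\bp_h}{0}^2 \le a_d(\bp_h,\bp_h)$, the $\bw_i$ analysis delivers
\[
    \sum_{i=1}^N \znorm{\CURL \bm{\rho}_i^{(1)}}{0,\Omega_i'}^2 \le C \max_{1 \le i \le N}\!\Bigl(1 + \tfrac{H_i}{\delta_i}\Bigr) a_d(\bp_h,\bp_h).
\]

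For $\bm{\rho}_i^{(2)}$, the construction matches $\widetilde{\bu}_i$ from Proposition~\ref{prop:est tu_i} with $\widetilde{\bm{\rho}}_h$ in place of $\widetilde{\bu}_h$. Lemma~\ref{lem:interpolation with theta_i}, the properties \eqref{eq:property theta_i} of $\theta_i$, Assumption~\ref{assump:finite covering}, and the inverse inequality give $\sum_i \znorm{\CURL \bm{\rho}_i^{(2)}}{0,\Omega_i'}^2 \le C h^{-2} \znorm{\widetilde{\bm{\rho}}_h}{0}^2$; the required estimate $\znorm{h^{-1}\widetilde{\bm{\rho}}_h}{0} \le C(\znorm{\CURL\bm{\rho}_h}{0} + \znorm{\bm{\rho}_h}{0}) \le C \znorm{\bp_h}{0}$ again comes from composing \eqref{eq:ND decomp est 2} applied to $\bm{\rho}_h$ with \eqref{eq:RT decomp est 1}. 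Combining the two pieces yields the stated bound.

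The main obstacle is simply bookkeeping: one must recognize that the second (face-level) Hiptmair--Pechstein decomposition feeds its $\HCurl$ piece $\bm{\rho}_h$ into the first (edge-level) Hiptmair--Pechstein decomposition, so every norm of $\bm{\sigma}_h$, $\widetilde{\bm{\rho}}_h$, and $\nabla \bm{\sigma}_h$ must ultimately be traced back to $\znorm{\bp_h}{0}$ rather than to $\znorm{\CURL \bp_h}{0}$ (which is not available and would not be meaningful for a face element). Once that observation is made, no new technical ingredient beyond the previous two propositions is needed.
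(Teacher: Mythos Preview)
Your proposal is correct and follows essentially the same approach as the paper: the paper's proof is a one-line reference to Propositions~\ref{prop:est w_i} and \ref{prop:est tu_i} together with \eqref{eq:RT decomp est 1}, and you have simply unpacked that reference---splitting $\bm{\rho}_i$ into its two pieces, applying the $\bw_i$ argument to $\bm{\rho}_i^{(1)}$ with $\bm{\sigma}_h$ in place of $\bw_h$, applying the $\widetilde{\bu}_i$ argument to $\bm{\rho}_i^{(2)}$, and chaining \eqref{eq:ND decomp est 1}--\eqref{eq:ND decomp est 2} (for $\bm{\rho}_h$) with \eqref{eq:RT decomp est 1} to land on $\znorm{\bp_h}{0}$. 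Your observation that $\DIV\CURL=0$ kills the $\eta_d$ term is exactly why the paper writes $a_d(\CURL\bm{\rho}_i,\CURL\bm{\rho}_i)=\mnorm{\CURL\bm{\rho}_i}{0}{\Omega_i'}^2$ in its display.
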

\begin{proof}
We can use the same methods in Propositions~\ref{prop:est w_i} and \ref{prop:est tu_i} and \eqref{eq:RT decomp est 1}. We then have
\[ 
    \begin{aligned}
    \sum_{i=1}^N a_d(\CURL \bm{\rho}_i, \CURL \bm{\rho}_i) & = \sum_{i=1}^N \mnorm{\CURL \bm{\rho}_i}{0}{\Omega_i'}^2 \le C \max_{1 \le i \le N}\left(1 + \dfrac{H_i}{\delta_i}\right)\znorm{\bp_h}{0}^2 \\
    & \le C \max_{1 \le i \le N}\left(1 + \dfrac{H_i}{\delta_i}\right) a_d(\bp_h, \bp_h).
    \end{aligned}
\]  
\end{proof}

\begin{prop}\label{prop:est r_i}
Assume that $\eta_d \le 1$. Then, the term $\br_i$ introduced in \eqref{eq:RT stable decomposition 2} has the following estimate:
\[ 
    \sum_{i=1}^N a_d(\br_i, \br_i) \le C \max_{1 \le i \le N}\left( 1 + \dfrac{H_i}{\delta_i}\right) a_d(\bp_h, \bp_h),
\] 
where the constant $C$ is independent from $N$, $h$, $H_i$, $\delta_i$, and $\eta_d$.
\end{prop}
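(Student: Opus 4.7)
The plan is to mirror the proof of Proposition~\ref{prop:est w_i} essentially verbatim, replacing $\CURL$ by $\DIV$, the Nédélec projection $\pi_h^{\mathcal{ND}}$ by its Raviart--Thomas analog $\pi_h^{\mathcal{RT}}$, $Q_H^{\mathcal{ND}}$ by $Q_H^{\mathcal{RT}}$, and the Nédélec decomposition bounds \eqref{eq:ND decomp est 1}--\eqref{eq:ND decomp est 2} by the face-element bounds \eqref{eq:RT decomp est 1}--\eqref{eq:RT decomp est 2} from Lemma~\ref{lem:HP decomp RT}. The second commuting relation in \eqref{eq:commuting properties}, namely $\DIV(\pi_h^{\mathcal{RT}}\br_h) = \pi_h^0(\DIV \br_h)$, will play the role that the first commuting relation played in the curl case.

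First I would control the $L^2$ part: by Lemma~\ref{lem:interpolation with theta_i}, the pointwise bound $0 \le \theta_i \le 1$ from \eqref{eq:property theta_i}, the finite-covering Assumption~\ref{assump:finite covering}, the stability of $Q_H^{\mathcal{RT}}$ established in \eqref{eq:est r_0}, and \eqref{eq:RT decomp est 1}, one obtains
\[
\sum_{i=1}^N \mnorm{\br_i}{0}{\Omega_i'}^2 \le C\sum_{i=1}^N \mnorm{\theta_i(\pi_h^{\mathcal{RT}}\br_h - \br_0)}{0}{\Omega_i'}^2 \le C\znorm{\br_h}{0}^2 \le C\znorm{\bp_h}{0}^2.
\]
Next, for the divergence contribution I would set $\bv_1 = \pi_h^{\mathcal{RT}}\br_h - \br_h$ and $\bv_2 = \br_h - Q_H^{\mathcal{RT}}\br_h$, write $\bv = \bv_1 + \bv_2$, apply Lemma~\ref{lem:interpolation with theta_i} and the bound $\znorm{\theta_i}{\infty} \le C/\delta_i$, and split
\[
\sum_{i=1}^N \eta_d \mnorm{\DIV \br_i}{0}{\Omega_i'}^2 \le C\eta_d\znorm{\DIV \bv}{0}^2 + C\eta_d\sum_{i=1}^N \delta_i^{-2}\mnorm{\bv_1}{0}{\Omega_{i,\delta}}^2 + C\eta_d\sum_{i=1}^N \delta_i^{-2}\mnorm{\bv_2}{0}{\Omega_{i,\delta}}^2,
\]
which I will call $E_1 + E_2 + E_3$ in imitation of \eqref{eq:est curl w_i 1}.

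The estimate $E_1 \le C a_d(\bp_h,\bp_h)$ follows by the triangle inequality on $\DIV(\pi_h^{\mathcal{RT}}\br_h - Q_H^{\mathcal{RT}}\br_h)$, then using the commuting identity $\DIV(\pi_h^{\mathcal{RT}}\br_h) = \pi_h^0(\DIV\br_h)$ together with the $L^2$-stability of $\pi_h^0$ from \eqref{eq:cochain local stability} and the bound $\znorm{\DIV(Q_H^{\mathcal{RT}}\br_h)}{0} \le C\seminorm{\br_h}{1}{}$ from Lemma~\ref{lem:L2 projection}, ultimately controlled by $\znorm{\bm{\nabla} \br_h}{0}$ and hence by $\znorm{\bp_h}{0} + \znorm{\DIV \bp_h}{0}$ via \eqref{eq:RT decomp est 2}. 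The term $E_2$ is handled by the $\HDiv$ analog of \eqref{eq:est E2}: the estimate $\znorm{\bw_h - \pi_h^{\mathcal{RT}}\bw_h}{0} \le Ch\znorm{\bm{\nabla}\bw_h}{0}$ (valid for $\br_h \in (\Sh)^3$ and already recorded after \eqref{eq:cochain local stability}), combined with a shift of $\delta_i^{-1}$ to $h^{-1}$ and the covering assumption, reduces it to $\eta_d \znorm{\bm{\nabla}\br_h}{0}^2$ and hence to $a_d(\bp_h,\bp_h)$.

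The main technical step, as in the edge case, is $E_3$; this is where the factor $1 + H_i/\delta_i$ enters. I would apply Lemma~\ref{lem:est layer Omega_i} to $\bv_2 = \br_h - Q_H^{\mathcal{RT}}\br_h$, split the result into $E_{3,1} + E_{3,2}$ as in \eqref{eq:est E3}, and then bound $\seminorm{\bv_2}{1}{K}$ exactly as in \eqref{eq:est v_2 h_1} by inserting $Q_{H,K}^0\br_h$, invoking an inverse inequality on the coarse element $K$, and using Lemma~\ref{lem:L2 projection K} plus Lemma~\ref{lem:L2 projection} (fourth line). Summing over $K$ and $i$, using Assumption~\ref{assump:H} to replace $H_K$ by $H_i$ and the finite-covering assumption, yields $E_{3,1} + E_{3,2} \le C(1 + \Xi)\eta_d \znorm{\bm{\nabla}\br_h}{0}^2$ with $\Xi = \max_i H_i/\delta_i$, which by \eqref{eq:RT decomp est 2} is controlled by $C(1 + \Xi)a_d(\bp_h,\bp_h)$. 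Combining the $L^2$ and divergence bounds gives the claim. I do not anticipate any genuinely new obstacle beyond carefully substituting the $\HDiv$ objects for their $\HCurl$ counterparts; the structural argument, including the need to pass through the coarse piecewise-constant projection to obtain the fractional-order estimate on $\Omega_{i,\delta}$, is identical.
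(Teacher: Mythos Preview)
Your proposal is correct and follows essentially the same route as the paper's proof: the paper likewise sets $\bq_1=\pi_h^{\mathcal{RT}}\br_h-\br_h$, $\bq_2=\br_h-Q_H^{\mathcal{RT}}\br_h$, splits the divergence contribution into three terms $F_1,F_2,F_3$ handled exactly as your $E_1,E_2,E_3$ (commuting identity plus $\pi_h^0$-stability for the first, the approximation bound $\znorm{\br_h-\pi_h^{\mathcal{RT}}\br_h}{0}\le Ch\znorm{\bm\nabla\br_h}{0}$ with $\delta_i^{-1}\le h^{-1}$ for the second, Lemma~\ref{lem:est layer Omega_i} with the $Q_{H,K}^0$ insertion trick for the third), and closes with \eqref{eq:RT decomp est 2}. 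The only differences are notational.
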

\begin{proof}
    With the same process with \eqref{eq:est w_i}, we obtain
    \begin{equation}\label{eq:est r_i}
        \begin{aligned}
            \sum_{i=1}^N \mnorm{\br_i}{0}{\Omega_i'}^2 
            & \le C \znorm{\br_h}{0}^2 \le C \znorm{\bp_h}{0}^2.
        \end{aligned}
    \end{equation}
    Let $\bq = \bq_1 + \bq_2$ with $\bq_1 = \pi_h^{\mathcal{RT}}\br_h - \br_h$ and $\bq_2 = \br_h - \br_0 = \br_h - Q_H^{\mathcal{RT}} \br_h$. By using a similar argument to \eqref{eq:est curl w_i 1}, we have
    \begin{equation}\label{eq:est div r_i 1}
        \begin{aligned}
        & \sum_{i=1}^N \eta_d \mnorm{\DIV \br_i}{0}{\Omega_i'}^2 \\
        & \le C \eta_d \znorm{\DIV \bq}{0}^2 + C \eta_d \sum_{i=1}^N \delta_i^{-2} \mnorm{\bq_1}{0}{\Omega_{i, \delta}}^2 + C \eta_d \sum_{i=1}^N \delta_i^{-2} \mnorm{\bq_2}{0}{\Omega_{i, \delta}}^2\\
        & := F_1 + F_2 + F_3.
        \end{aligned}
    \end{equation}

    Regarding $F_1$, we obtain the following estimate in a similar way to \eqref{eq:est E1}:
    \begin{equation}\label{eq:est F1}
        \begin{aligned}
        F_1 & = C \eta_d \znorm{\DIV \bq}{0}^2 \le C \eta_d\znorm{\DIV \left(\pi_{h}^{\mathcal{RT}} \br_h\right)}{0}^2 + C \eta_d\znorm{\DIV \left(Q_H^{\mathcal{RT}} \br_h\right)}{0}^2 \\
        & \le C \eta_d\znorm{\pi_h^0 \left(\DIV \br_h\right)}{0}^2 + C \eta_d\znorm{\bm{\nabla} \br_h}{0}^2 
        \le C \eta_d \znorm{\DIV \br_h}{0}^2 + C \eta_d \znorm{\bm{\nabla} \br_h}{0}^2 \\
        & \le C \eta_d \znorm{\bm{\nabla} \br_h}{0}^2 \le C \left(\eta_d \znorm{\DIV \bp_h}{0}^2 + \eta_d\znorm{\bp_h}{0}^2\right) \le C a_d(\bp_h, \bp_h).
        \end{aligned}
    \end{equation}

    We next estimate $F_2$. The following bound can be found using the argument in \eqref{eq:est E2}:
    \begin{equation}\label{eq:est F2}
        \begin{aligned}
        F_2 & = C \eta_d\sum_{i=1}^N \delta_i^{-2} \mnorm{\bq_1}{0}{\Omega_{i, \delta}} \le C \eta_d\sum_{i=1}^N h^{-2} \mnorm{\pi_{h}^{\mathcal{RT}} \br_h - \br_h}{0}{\Omega_i'}^2\\
        & \le C \eta_d\znorm{\bm{\nabla} \br_h}{0}^2 \le C \left(\eta_d \znorm{\DIV \bp_h}{0}^2 + \eta_d\znorm{\bp_h}{0}^2\right) \le C a_d(\bp_h, \bp_h).
        \end{aligned}
    \end{equation}

    Finally, we consider $F_3$. Like \eqref{eq:est E3}, from Lemma~\ref{lem:est layer Omega_i}, we have
    \begin{equation}\label{eq:est F3}
        \begin{aligned}
        F_3 & = C \eta_d \sum_{i=1}^N \delta_i^{-2} \mnorm{\bq_2}{0}{\Omega_{i, \delta}}^2 \\
        & \le C \eta_d  \sum_{i=1}^N \sum_{j \in I_i} \sum_{\substack{K \in \cT_H, \\ K \subset \Omega_j}} \left(1 + \dfrac{H_i}{\delta_i}\right) \seminorm{\bq_2}{1}{K}^2 + C \eta_d \sum_{i=1}^N \sum_{j \in I_i} \sum_{\substack{K \in \cT_H, \\ K \subset \Omega_j}} \dfrac{1}{\delta_i H_i} \mnorm{\bq_2}{0}{K}^2 \\
        & := F_{3, 1} + F_{3, 2}.
        \end{aligned}
    \end{equation}
    In the same way as \eqref{eq:est v_2 h_1}, we obtain
    \begin{equation}\label{eq:est q_2 h_1}
        \seminorm{\bq_2}{1}{K} \le C \seminorm{\br_h}{1}{K} + CH^{-1} \mnorm{Q_H^{\mathcal{RT}}\br_h - \br_h}{0}{K}.
    \end{equation}
    Hence, in a similar way to \eqref{eq:est E31}, we have 
    \begin{equation}\label{eq:est F31}
        \begin{aligned}
            F_{3, 1} & = C \eta_d \sum_{i=1}^N \sum_{j \in I_i} \sum_{\substack{K \in \cT_H, \\ K \subset \Omega_j}} \left(1 + \dfrac{H_i}{\delta_i}\right) \seminorm{\bq_2}{1}{K}^2 \le C \eta_d (1 + \Xi)\sum_{i=1}^N \sum_{j \in I_i} \sum_{\substack{K \in \cT_H, \\ K \subset \Omega_j}} \seminorm{\bq_2}{1}{K}^2\\
            & \le \eta_d (1 + \Xi) \left(C \znorm{\bm{\nabla} \br_h}{0}^2 + C H^{-2} \znorm{Q_H^{\mathcal{RT}} \br_h - \br_h}{0}^2 \right)
            \le C \eta_d(1 + \Xi) \znorm{\bm{\nabla} \br_h}{0}^2 \\
            & \le C (1 + \Xi)\left(\eta_d \znorm{\DIV \bp_h}{0}^2 + \eta_d\znorm{\bp_h}{0}^2\right) \le C (1 + \Xi)a_d(\bp_h, \bp_h),
        \end{aligned}
    \end{equation}
    where $\Xi = \displaystyle{\max_{1 \le i \le N}\left(\dfrac{H_i}{\delta_i}\right)}$.

    In addition, the argument in \eqref{eq:est E32} gives
    \begin{equation}\label{eq:est F32}
        \begin{aligned}
        F_{3, 2} & = C \eta_d \sum_{i=1}^N \sum_{j \in I_i} \sum_{\substack{K \in \cT_H, \\ K \subset \Omega_j}} \dfrac{1}{\delta_i H_i} \mnorm{\bq_2}{0}{\Omega_i}^2 \le C \eta_d \Xi H^{-2} \znorm{Q_H^{\mathcal{RT}} \br_h - \br_h}{0}^2 \\
        & \le C \eta_d \Xi \znorm{\bm{\nabla} \br_h}{0}^2 \le C \Xi \left(\eta_d \znorm{\DIV \bp_h}{0}^2 + \eta_d\znorm{\bp_h}{0}^2\right) \le C \Xi a_d(\bp_h, \bp_h).
        \end{aligned}
    \end{equation}

    We therefore have the following inequality by using \eqref{eq:est div r_i 1}, \eqref{eq:est F1}, \eqref{eq:est F2}, \eqref{eq:est F3}, \eqref{eq:est F31}, and \eqref{eq:est F32}:
    \begin{equation}\label{eq:est div r_i final}
            \sum_{i=1}^N \eta_d \mnorm{\DIV \br_i}{0}{\Omega_i'}^2 \le C \max_{1 \le i \le N}\left(1 + \dfrac{H_i}{\delta_i}\right) a_d(\bp_h, \bp_h).
    \end{equation}
    Due to \eqref{eq:est r_i} and \eqref{eq:est div r_i final}, we finally have
    \begin{equation}
        \sum_{i=1}^N a_d(\br_i, \br_i) \le C \max_{1 \le i \le N}\left( 1 + \dfrac{H_i}{\delta_i}\right) a_d(\bp_h, \bp_h).
    \end{equation}
\end{proof}

\begin{prop}\label{prop:est tp_i}
Assume that $\eta_d \le 1$. Then, the term $\widetilde{\bp}_i$ introduced in \eqref{eq:RT stable decomposition 2} has the following estimate:
\[ 
    \sum_{i=1}^N a_d(\widetilde{\bp}_i, \widetilde{\bp}_i) \le C a_d(\bp_h, \bp_h),
\] 
where the constant $C$ is independent from $N$, $h$, $H_i$, $\delta_i$, and $\eta_d$.
\end{prop}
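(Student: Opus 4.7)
The plan is to mirror, almost verbatim, the proof of Proposition \ref{prop:est tu_i}, replacing the N\'ed\'elec objects and bounds by their Raviart--Thomas counterparts. The essential ingredients that make the N\'ed\'elec argument work are: (i) the local stability of $\Pi_h^{\mathcal{ND}}(\theta_i \cdot)$ both in $L^2$ and under $\CURL$ from Lemma~\ref{lem:interpolation with theta_i}; (ii) the bound on $\|\nabla \theta_i\|_\infty$ from \eqref{eq:property theta_i}; (iii) the finite-covering property (Assumption~\ref{assump:finite covering}); (iv) an inverse estimate; and (v) the key fact that the remainder $\widetilde{\bu}_h$ is small in the sense $\|h^{-1}\widetilde{\bu}_h\|_0 \le C(\|\bu_h\|_0+\|\CURL \bu_h\|_0)$. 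All five items have direct RT analogs here.

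First I would treat the $L^2$ part. Applying the third inequality in Lemma~\ref{lem:interpolation with theta_i} to $\widetilde{\bp}_i = \Pi_h^{\mathcal{RT}}(\theta_i \widetilde{\bp}_h)$, together with $0 \le \theta_i \le 1$ and Assumption~\ref{assump:finite covering}, gives
\[
  \sum_{i=1}^N \mnorm{\widetilde{\bp}_i}{0}{\Omega_i'}^2 \le C \sum_{i=1}^N \mnorm{\theta_i \widetilde{\bp}_h}{0}{\Omega_i'}^2 \le C \znorm{\widetilde{\bp}_h}{0}^2 \le C \znorm{\bp_h}{0}^2,
\]
where the last inequality is \eqref{eq:RT decomp est 1}.

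Next I would handle the divergence. The fourth inequality in Lemma~\ref{lem:interpolation with theta_i} together with the product rule $\DIV(\theta_i \widetilde{\bp}_h) = \nabla \theta_i \cdot \widetilde{\bp}_h + \theta_i \DIV \widetilde{\bp}_h$, the bound $\|\nabla \theta_i\|_\infty \le C/\delta_i$, and the finite-covering property yield
\[
  \sum_{i=1}^N \eta_d \mnorm{\DIV \widetilde{\bp}_i}{0}{\Omega_i'}^2 \le C \eta_d \sum_{i=1}^N \delta_i^{-2} \mnorm{\widetilde{\bp}_h}{0}{\Omega_i'}^2 + C \eta_d \znorm{\DIV \widetilde{\bp}_h}{0}^2.
\]
Since the overlap $\delta_i$ is at least of order $h$, we have $\delta_i^{-2} \le C h^{-2}$, and the inverse estimate on the finite-element function $\widetilde{\bp}_h \in \RT$ gives $\|\DIV \widetilde{\bp}_h\|_0 \le C h^{-1} \|\widetilde{\bp}_h\|_0$. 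Combining these and invoking \eqref{eq:RT decomp est 2} produces
\[
  \sum_{i=1}^N \eta_d \mnorm{\DIV \widetilde{\bp}_i}{0}{\Omega_i'}^2 \le C \eta_d \znorm{h^{-1}\widetilde{\bp}_h}{0}^2 \le C\bigl(\eta_d \znorm{\DIV \bp_h}{0}^2 + \eta_d \znorm{\bp_h}{0}^2\bigr) \le C a_d(\bp_h,\bp_h).
\]
Adding the two contributions finishes the proof.

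There is essentially no obstacle: the argument is parallel to Proposition~\ref{prop:est tu_i} and the only small point to verify is that the RT analog of the ``$h^{-1}$-smallness'' of the remainder, which is exactly the second line of \eqref{eq:RT decomp est 2}, combines correctly with the inverse estimate to absorb the extra $\delta_i^{-2}$ factor coming from $\nabla \theta_i$. The assumption $\eta_d \le 1$ enters only so that the $\|\bp_h\|_0^2$ term on the right can be absorbed by $a_d(\bp_h,\bp_h)$ without carrying an $\eta_d^{-1}$ constant.
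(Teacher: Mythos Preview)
Your proposal is correct and follows essentially the same argument as the paper's proof: both control the $L^2$ part via Lemma~\ref{lem:interpolation with theta_i}, \eqref{eq:property theta_i}, finite covering, and \eqref{eq:RT decomp est 1}, and then bound the divergence part by expanding $\DIV(\theta_i\widetilde{\bp}_h)$, using $\delta_i^{-2}\le C h^{-2}$ together with the inverse inequality, and finishing with \eqref{eq:RT decomp est 2}. The only difference is that you spell out the product rule and the role of $\eta_d\le 1$ a bit more explicitly than the paper does.
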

\begin{proof}
    By using Lemma~\ref{lem:interpolation with theta_i}, the construction of the partition of unity set $\left\{\theta_i \right\}$ in \eqref{eq:property theta_i}, and \eqref{eq:RT decomp est 1}, we obtain
    \begin{equation}\label{eq:est tp_i}
        \sum_{i=1}^N \mnorm{\widetilde{\bp}_i}{0}{\Omega_i'}^2 \le C \sum_{i=1}^N \mnorm{\theta_i \widetilde{\bp}_h}{0}{\Omega_i'}^2 \le C \znorm{\widetilde{\bp}_h}{0}^2 \le C \znorm{\bp_h}{0}^2.
    \end{equation}
    From  \eqref{eq:property theta_i}, Lemma~\ref{lem:interpolation with theta_i}, the inverse inequality, and \eqref{eq:RT decomp est 2}, we have
    \begin{equation}\label{eq:est div tp_i}
        \begin{aligned}
            \eta_d \sum_{i=1}^N \mnorm{\DIV \widetilde{\bp}_i}{0}{\Omega_i'}^2 & \le C \eta_d \sum_{i=1}^N \delta_i^{-2} \mnorm{\widetilde{\bp}_h}{0}{\Omega_i'}^2 + C \eta_d \znorm{\DIV \widetilde{\bp}_h}{0}^2 \\
            & \le C \eta_d h^{-2} \znorm{\widetilde{\bp}_h}{0}^2 \le C \left(\eta_d \znorm{\DIV \bp_h}{0}^2 + \eta_d\znorm{\bp_h}{0}^2\right) \\
            & \le C a_d(\bp_h, \bp_h).
        \end{aligned}
    \end{equation}
    We therefore have
    \[ 
        \sum_{i=1}^N a_d(\widetilde{\bp}_i, \widetilde{\bp}_i) \le C a_d(\bp_h, \bp_h).
    \] 
\end{proof}
\begin{lem}\label{lem:RT local estimate}
    Assume that the constant $\eta_d$ in \eqref{eq:H(div) bilinear form} is less than or equal to one. Then, we have the following estimate for the local components in \eqref{eq:ND stable decomposition 1}:
    \begin{equation}\label{eq:RT local estimate}
        \sum_{i=1}^N a_d(\bp_i, \bp_i) \le C \left(\max_{1 \le i \le N}\left(1 + \dfrac{H_i}{\delta_i}\right)\right)a_d(\bp_h, \bp_h),
    \end{equation}
    where the constant $C$ does not depend on $N$, $h$, $H_i$, $\delta_i$, and $\eta_d$.
\end{lem}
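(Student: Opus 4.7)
The strategy is to combine the three bounds established in Propositions~\ref{prop:est rho_i}, \ref{prop:est r_i}, and \ref{prop:est tp_i}, in direct analogy with how Lemma~\ref{lem:ND local estimate} follows from Propositions~\ref{prop:est chi_i}, \ref{prop:est w_i}, and \ref{prop:est tu_i} in the $\HCurl$ setting. Starting from the local splitting $\bp_i = \CURL \bm{\rho}_i + \br_i + \widetilde{\bp}_i$ given in \eqref{eq:RT stable decomposition 1}, I would expand $a_d(\bp_i,\bp_i) = \eta_d \znorm{\DIV \bp_i}{0,\Omega_i'}^2 + \znorm{\bp_i}{0,\Omega_i'}^2$ and estimate the two contributions separately.

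For the $L^2$ part, since each summand is supported in $\Omega_i'$, the elementary inequality $(a+b+c)^2 \le 3(a^2+b^2+c^2)$ together with the triangle inequality yields $\znorm{\bp_i}{0,\Omega_i'}^2 \le 3\bigl(\znorm{\CURL \bm{\rho}_i}{0,\Omega_i'}^2 + \znorm{\br_i}{0,\Omega_i'}^2 + \znorm{\widetilde{\bp}_i}{0,\Omega_i'}^2\bigr)$. For the divergence part, the crucial observation is that $\DIV(\CURL \bm{\rho}_i) = 0$, which reduces $\DIV \bp_i$ to $\DIV \br_i + \DIV \widetilde{\bp}_i$, and then the triangle inequality gives $\eta_d \znorm{\DIV \bp_i}{0,\Omega_i'}^2 \le 2\eta_d\bigl(\znorm{\DIV \br_i}{0,\Omega_i'}^2 + \znorm{\DIV \widetilde{\bp}_i}{0,\Omega_i'}^2\bigr)$.

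Summing these bounds over $i$ and regrouping terms, the right-hand side is controlled by a constant multiple of $\sum_{i=1}^N \bigl[a_d(\CURL \bm{\rho}_i, \CURL \bm{\rho}_i) + a_d(\br_i, \br_i) + a_d(\widetilde{\bp}_i, \widetilde{\bp}_i)\bigr]$, where again I use $\DIV(\CURL \bm{\rho}_i) = 0$ to identify $a_d(\CURL \bm{\rho}_i,\CURL \bm{\rho}_i) = \znorm{\CURL \bm{\rho}_i}{0}^2$. Applying Propositions~\ref{prop:est rho_i}, \ref{prop:est r_i}, and \ref{prop:est tp_i} to each of the three sums respectively produces a bound of the form $C\max_{1\le i\le N}(1 + H_i/\delta_i)\,a_d(\bp_h,\bp_h)$, which is exactly \eqref{eq:RT local estimate}.

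There is essentially no obstacle at this stage: the technically demanding work—in particular the layer estimate for $\br_h - Q_H^{\mathcal{RT}} \br_h$ via Lemma~\ref{lem:est layer Omega_i}, the use of the cochain commuting property \eqref{eq:commuting properties} through $\pi_h^0$, and the treatment of the remainder $\widetilde{\bp}_h$ via the inverse inequality combined with \eqref{eq:RT decomp est 2}—has already been absorbed into the three propositions. The lemma itself is therefore a clean assembly step, and I expect the proof to be only a few lines long.
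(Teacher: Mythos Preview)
Your proposal is correct and matches the paper's approach: the paper's proof consists of a single sentence invoking Propositions~\ref{prop:est rho_i}, \ref{prop:est r_i}, and \ref{prop:est tp_i}, exactly as you describe. Your explicit use of the triangle inequality and the identity $\DIV(\CURL \bm{\rho}_i)=0$ simply spells out the obvious reduction that the paper leaves implicit.
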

\begin{proof}
Based on Propositions~\ref{prop:est rho_i}, \ref{prop:est r_i}, and \ref{prop:est tp_i}, we have \eqref{eq:RT local estimate}.
\end{proof}

Finally, We obtain an estimate of the condition number for our $\HDiv$ model problem. 
\begin{thm}\label{thm:RT condition number estimate}
    Let $\eta_d \le 1$. We then have the following estimate:
    \begin{equation}\label{eq:cond est RT}
        \kappa(M_d^{-1}A_d) \le C\max_{1 \le i \le N}\left(1 + \dfrac{H_i}{\delta_i}\right),
    \end{equation}
    where the constant $C$ does not depend on the mesh sizes, $H_i$, $\delta_i$, $\eta_d$, and the number of subdomains but may depend on $N_0$.
\end{thm}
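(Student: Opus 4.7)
The final statement, Theorem~\ref{thm:RT condition number estimate}, is the $\HDiv$ analogue of Theorem~\ref{thm:ND condition number estimate}, and essentially all of the hard work has already been packaged into the preceding lemmas. My plan is therefore to combine the abstract Schwarz framework with the stable decomposition constructed in \eqref{eq:RT stable decomposition 1}--\eqref{eq:RT stable decomposition 2} and estimated in Lemmas~\ref{lem:RT coarse estimate} and \ref{lem:RT local estimate}.

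First, I would invoke Lemma~\ref{lem:smallest eigenvalue RT}. For any $\bp_h \in \RT$, the decomposition $\bp_h = \sum_{i=0}^N \bp_i$ with $\bp_0 \in \RTH$ and $\bp_i \in \RT^{(i)}$ given by \eqref{eq:RT stable decomposition 1}--\eqref{eq:RT stable decomposition 2} is admissible. Adding the coarse-component bound of Lemma~\ref{lem:RT coarse estimate} to the local-component bound of Lemma~\ref{lem:RT local estimate} yields
\[
    \sum_{i=0}^N a_d(\bp_i, \bp_i) \le C\Bigl(\max_{1 \le i \le N}\bigl(1 + H_i/\delta_i\bigr)\Bigr)\, a_d(\bp_h, \bp_h),
\]
so Lemma~\ref{lem:smallest eigenvalue RT} gives a lower bound on the smallest eigenvalue of $M_d^{-1}A_d$ of the form $C^{-1}\bigl(\max_i(1+H_i/\delta_i)\bigr)^{-1}$.

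Second, I would apply Lemma~\ref{lem:largest eigenvalue}, which bounds the largest eigenvalue of $M_d^{-1}A_d$ from above by $N_0 + 1$, depending only on the coloring constant from Assumption~\ref{assump:finite covering}. Taking the ratio of the upper and lower eigenvalue bounds produces exactly the estimate \eqref{eq:cond est RT}, with a constant that depends only on $N_0$ and the generic constants in the preceding lemmas, and is independent of $h$, the $H_i$, the $\delta_i$, $\eta_d$, and $N$.

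There is essentially no obstacle at this stage, since all of the technical work (the Hiptmair--Pechstein regular decomposition of Lemma~\ref{lem:HP decomp RT}, the layer estimate of Lemma~\ref{lem:est layer Omega_i}, the interpolation stability of Lemma~\ref{lem:interpolation with theta_i}, and the stability of the Scott--Zhang and $L^2$ projections) was already absorbed into the proofs of Propositions~\ref{prop:est rho_i}--\ref{prop:est tp_i} and Lemmas~\ref{lem:RT coarse estimate}--\ref{lem:RT local estimate}. The proof of Theorem~\ref{thm:RT condition number estimate} is thus a one-line assembly of these ingredients, exactly in parallel with Theorem~\ref{thm:ND condition number estimate}.
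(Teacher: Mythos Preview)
Your proposal is correct and matches the paper's own proof essentially verbatim: the paper simply cites Lemmas~\ref{lem:smallest eigenvalue RT}, \ref{lem:largest eigenvalue}, \ref{lem:RT coarse estimate}, and \ref{lem:RT local estimate} to conclude \eqref{eq:cond est RT}. There is nothing to add.
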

\begin{proof}
    We obtain \eqref{eq:cond est RT} by using Lemmas~\ref{lem:smallest eigenvalue RT}, \ref{lem:largest eigenvalue}, \ref{lem:RT coarse estimate}, and \ref{lem:RT local estimate}.
\end{proof}

\begin{cor}
    If the second Betti number of the domain $\Omega$ is zero, we have \eqref{eq:cond est RT} in Theorem~\ref{thm:RT condition number estimate} without the assumption $\eta_d \le 1$.
\end{cor}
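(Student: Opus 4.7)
The plan is to mirror the corollary already proved for the $\HCurl$ case. Under the hypothesis that the second Betti number of $\Omega$ vanishes, the Hiptmair--Pechstein decomposition of Lemma~\ref{lem:HP decomp RT} admits the sharper form in which the right-hand side of \eqref{eq:RT decomp est 2} may be replaced by $\znorm{\DIV \bp_h}{0}$ alone, with the $\znorm{\bp_h}{0}$ term removed; this is the face-element analogue of the fact cited from \cite[Section 5.1]{HP:2019:RegularDecomp} for the edge-element case. I would simply substitute this stronger bound into the existing arguments.

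First I would audit every place in the $\HDiv$ analysis where the hypothesis $\eta_d \le 1$ is actually used. Inspection of Lemma~\ref{lem:RT coarse estimate} and Propositions~\ref{prop:est rho_i}, \ref{prop:est r_i}, \ref{prop:est tp_i} reveals that each invocation of the assumption occurs in the same spot: after applying \eqref{eq:RT decomp est 2}, the resulting bound has the shape $C\bigl(\eta_d \znorm{\DIV \bp_h}{0}^2 + \eta_d \znorm{\bp_h}{0}^2\bigr)$, and one then uses $\eta_d \le 1$ to absorb the second summand into $a_d(\bp_h,\bp_h) = \eta_d\znorm{\DIV \bp_h}{0}^2 + \znorm{\bp_h}{0}^2$. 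The bounds that rely solely on \eqref{eq:RT decomp est 1} (the $L^2$ estimates such as \eqref{eq:est r_0}, \eqref{eq:est r_i}, \eqref{eq:est tp_i}) never needed the assumption at all, since there the factor in front is $1$ and the resulting $C\znorm{\bp_h}{0}^2$ is already dominated by $a_d(\bp_h,\bp_h)$.

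Second, with the improved decomposition in hand, every such step is replaced by the cleaner chain
\[
  C \eta_d \znorm{\nabla \br_h}{0}^2 \le C \eta_d \znorm{\DIV \bp_h}{0}^2 \le C a_d(\bp_h,\bp_h),
\]
and similarly for $\znorm{h^{-1}\widetilde{\bp}_h}{0}^2$, which hold with no restriction on $\eta_d$. Propagating this replacement verbatim through the estimates \eqref{eq:est div r_0}, \eqref{eq:est F1}, \eqref{eq:est F2}, \eqref{eq:est F31}, \eqref{eq:est F32}, \eqref{eq:est div tp_i}, and \eqref{eq:est curl rho_0} (the last via the edge-element decomposition of $\bm{\rho}_h$, whose dependence on $\eta_d$ is already trivial since the divergence does not see $\CURL \bm{\rho}_i$) removes the $\eta_d \le 1$ requirement from Lemmas~\ref{lem:RT coarse estimate} and \ref{lem:RT local estimate} without altering the structure of the argument.

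Finally, assembling these improved versions of the coarse and local bounds via Lemmas~\ref{lem:smallest eigenvalue RT} and \ref{lem:largest eigenvalue} yields \eqref{eq:cond est RT} for arbitrary positive $\eta_d$. There is no genuine obstacle here; the only thing to verify carefully is that the sharper bound of \cite[Section 5.1]{HP:2019:RegularDecomp} indeed transfers from the edge-element decomposition to the face-element decomposition under the vanishing second Betti number hypothesis, which is the standard cohomological fact that $\ker(\DIV)$ restricted to $\RT$ coincides with $\CURL\,\ND$ in that topological setting.
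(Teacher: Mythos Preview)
Your proposal is correct and follows essentially the same approach as the paper: invoke the sharper Hiptmair--Pechstein bound available when the second Betti number vanishes, so that the right-hand side of \eqref{eq:RT decomp est 2} reduces to $\znorm{\DIV \bp_h}{0}$ alone, and observe that this eliminates every use of $\eta_d\le 1$ in the analysis. The only minor discrepancy is that the paper points to \cite[Section~5.2]{HP:2019:RegularDecomp} (the face-element case) rather than Section~5.1.
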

\begin{proof}
Provided that the second Betti number of the domain $\Omega$ is zero, we can replace the upper bound of \eqref{eq:RT decomp est 2} by simply the divergence term. We therefore remove the assumption regarding the coefficient $\eta_d$ in Theorem~\ref{thm:RT condition number estimate}; see \cite[Section 5.2]{HP:2019:RegularDecomp} for more detail.
\end{proof}

\section{Numerical experiments}\label{sec:numerical experiments}

In this section, we perform three numerical experiments for $\HCurl$ problems. In each experiment, we report the error profile associated with the following errors:
\begin{equation*}
    \begin{aligned}
        \t{Error 1 } & := \| \Pi_h^{\mathcal{ND}} \b u -  \b u_h \|_0 \\
        \t{Error 2 } & := \|\t{curl}(\Pi_h^{\mathcal{ND}} \b u -  \b u_h) \|_0 \t{ for } 2D
                      \t{ or } \|\bb{curl}(\Pi_h^{\mathcal{ND}} \b u -  \b u_h) \|_0 \t{ for } 3D.
    \end{aligned}
\end{equation*}
In this work,  we proved that
\an{\label{b-d}  C_{\t{\tiny low}} \frac{a_c(\bu_h,\bu_h)}{\left(1+H/\delta\right)}
                  \le a_c(M_c^{-1}A_c\bu_h,\bu_h)
               \le C_{\t{\tiny high}}  a_c(\bu_h,\bu_h) }
  for some positive constants $C_{\t{\tiny low}}$ and $C_{\t{\tiny high}}$
   independent of $h$, $\delta$, and $H$. We also report the constants $C_{\t{\tiny low}}$ and $C_{\t{\tiny high}}$ obtained numerically in each example.

   \subsection{N\'{e}d\'{e}lec Type-1 rectangular element}
We solve the following $\bb{curl}\, \t{curl}$ equations on two domains:
\an{\label{e1} \ad{ \bb{curl} \, \t{curl}\, \b u_h + \b u_h &= \b f \quad &&\t{in } \Omega, \\
                    \b u_h\times \b n &= g   \quad &&\t{on } \partial \Omega, } }
where $\b n$ is the unit outer normal vector, and 
\a{ \Omega=\left(0,1\right)^2, \qquad \t{ or } \  \left(0,1\right)^2\setminus\left\{1/2 \right\}\times\left(0,1/2\right]. }
In both cases, the exact solution of \eqref{e1} is chosen as
\an{\label{s1} \b u=\p{y^5 \\ x^4}.  }
In both cases, the meshes used in the computation
    are uniform square meshes,  as shown in Figure \ref{g-sq}.
The results are listed in Table \ref{t1}, where we can see that the finite element
  solution converges at the optimal order in both norms on both domains.

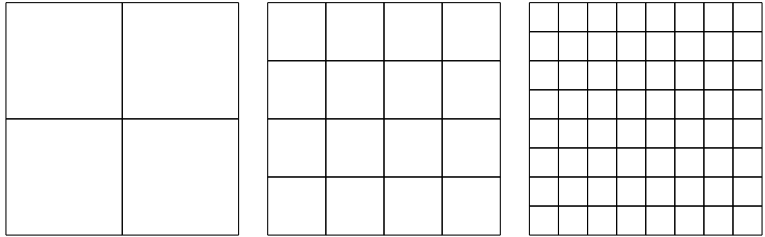
\begin{figure}[ht!]
 \begin{center} \setlength\unitlength{1.1pt}
\begin{picture}(260,80)(0,0)
  \def\tr{\begin{picture}(20,20)(0,0)
    \put(0,0){\line(1,0){20}}\put(0,20){\line(1,0){20}}
          \put(0,0){\line(0,1){20}} \put(20,0){\line(0,1){20}}   \end{picture}}
 
  {\setlength\unitlength{2.2pt}
 \multiput(0,0)(20,0){2}{\multiput(0,0)(0,20){2}{\tr}}}

  \multiput(90,0)(20,0){4}{\multiput(0,0)(0,20){4}{\tr}}

  {\setlength\unitlength{0.55pt}
 \multiput(360,0)(20,0){8}{\multiput(0,0)(0,20){8}{\tr}}}

 \end{picture}\end{center}
\caption{The first three grids for computing Tables \ref{t1}--\ref{t2}.}
\label{g-sq}
\end{figure}

\begin{table}[ht]
    \centering   \renewcommand{\arraystretch}{1.05}
    \caption{ Error profile for \eqref{s1}  on grids as shown in Figure \ref{g-sq}.
     }\label{t1} 
  \begin{tabular}{c|cc|cc|cc|cc}
  \hline
  Grid & Error 1 & Order   
        & Error 2 & Order & Error 1 & Order   
        & Error 2 & Order  \\
  \hline
   &\multicolumn{4}{c|}{On $\Omega=(0,1)^2$.}  & \multicolumn{4}{c}{On $\Omega=
   (0,1)^2\setminus\{\frac 12\}\times(0,\frac 12]$.} \\ \hline
   1&   8.97E-2 &  0.0&   3.11E-1 &  0.0 &   1.27E-1 &  0.0&   2.77E-1 &  0.0 \\
   2&   2.67E-2 &  1.7&   8.80E-2 &  1.8 &   3.36E-2 &  1.9&   8.19E-2 &  1.8 \\ 
   3&   6.94E-3 &  1.9&   2.26E-2 &  2.0 &   8.49E-3 &  2.0&   2.13E-2 &  1.9 \\
   4&   1.75E-3 &  2.0&   5.69E-3 &  2.0 &   2.12E-3 &  2.0&   5.37E-3 &  2.0 \\
   5&   4.39E-4 &  2.0&   1.43E-3 &  2.0 &   5.29E-4 &  2.0&   1.35E-3 &  2.0 \\
   6&   1.10E-4 &  2.0&   3.56E-4 &  2.0 &   1.32E-4 &  2.0&   3.37E-4 &  2.0 \\
   7&   2.75E-5 &  2.0&   8.91E-5 &  2.0 &   3.30E-5 &  2.0&   8.43E-5 &  2.0 \\
   8&   6.87E-6 &  2.0&   2.23E-5 &  2.0 &   8.24E-6 &  2.0&   2.11E-5 &  2.0 \\
  \hline
  \end{tabular}%
  \end{table}%

In this example, we subdivide both $\Omega$ into four subdomains, as shown in
    Figure \ref{d2}.  
We consider a minimum overlapping domain decomposition. 
We plot the nodes of the third-level finite element function inside each subdomain.
We note that the horizontal nodes belong to the first component of the vector
   $H(\t{curl})$ function.
The difference between the graphs is at the nodes on the lower middle vertical edge,
   which is a boundary edge.

\begin{figure}[htb]\begin{center}  
\begin{picture}(320,160)(0,0) 
\put(0,-80){\includegraphics[width=180pt]{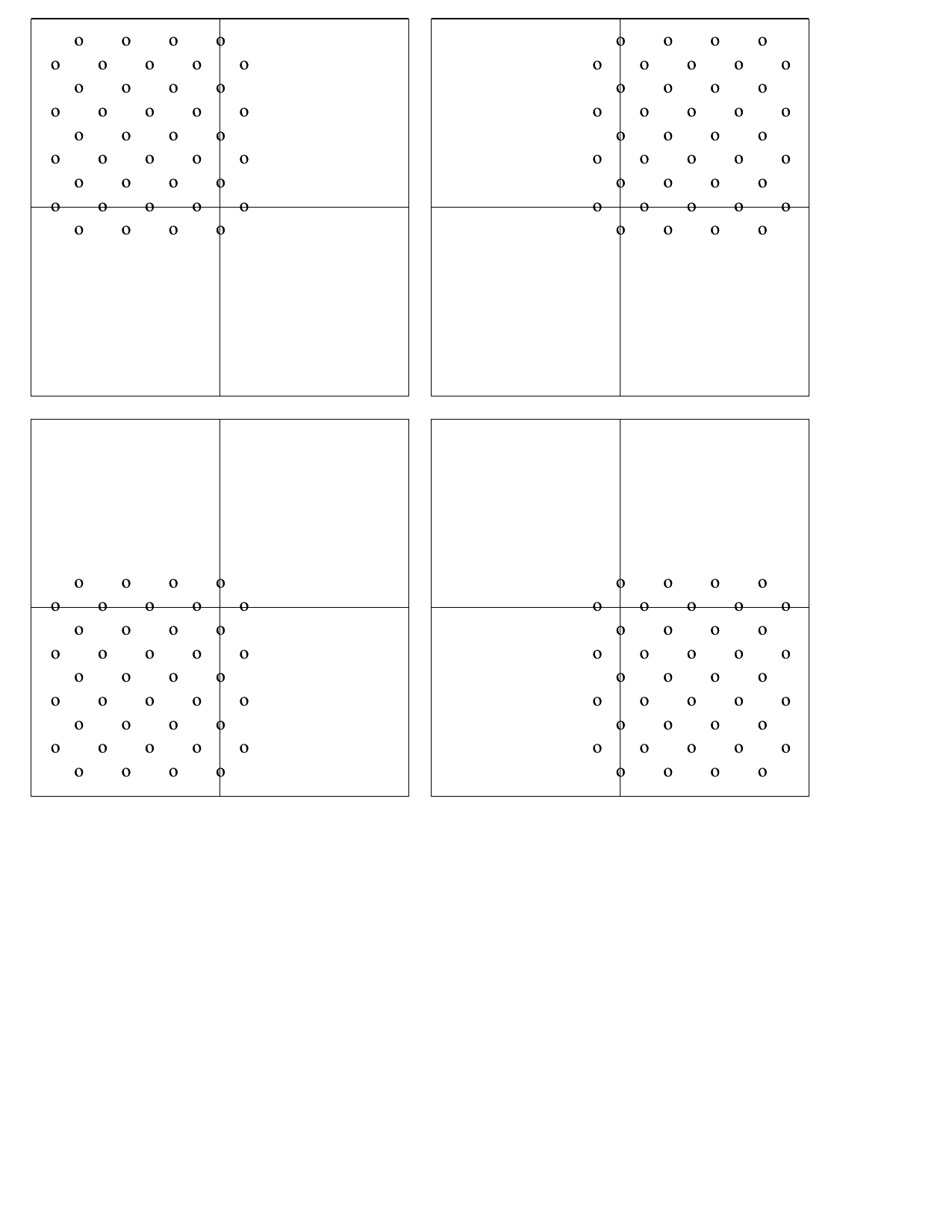}}
\put(160,-80){\includegraphics[width=180pt]{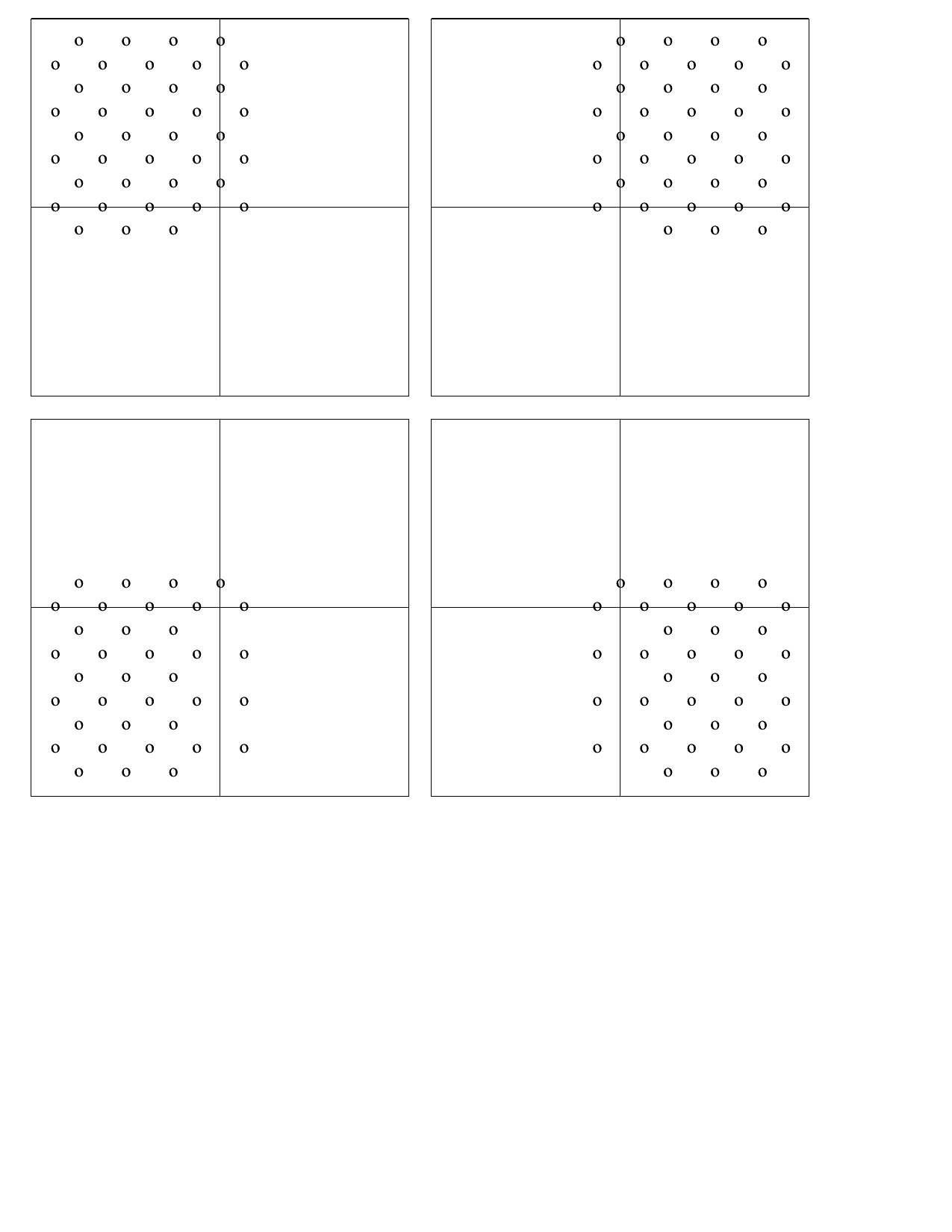}}
\end{picture}
\caption{Left: The level three function nodes in the four subdomains, where $\Omega=(0,1)^2$;
  Right: The 4-subdomain nodes for $\Omega=
   (0,1)^2\setminus\{\frac 12\}\times(0,\frac 12]$. }
\label{d2}
\end{center}
\end{figure}

We list the computer found constants of \eqref{b-d} in Table \ref{t2}.
As proved in the theory,  the constants remain bounded when doing
  domain decomposition methods on the non-convex domain  $\Omega=
(0,1)^2\setminus\{\frac 12\}\times(0,\frac 12]$.

\begin{table}[ht!]
    \centering   \renewcommand{\arraystretch}{1.1}
    \caption{The bounds for 4-subdomain small-overlap DD shown as in Figure \ref{d2}.
     }\label{t2} 
  \begin{tabular}{c|cc|cc}
  \hline
  Grid & $C_{\t{\tiny low}}$ in \eqref{b-d}&$C_{\t{\tiny high}}$ in \eqref{b-d} & $C_{\t{\tiny low}}$ in \eqref{b-d}&$C_{\t{\tiny high}}$ in \eqref{b-d} \\
  \hline
   &\multicolumn{2}{c|}{On $\Omega=(0,1)^2$.} &\multicolumn{2}{c}{On $\Omega=
   (0,1)^2\setminus\{\frac 12\}\times(0,\frac 12]$.} \\ \hline 
   2 &           2.015473 &           4.485408 &           1.958618 &           4.349539 \\
   3 &           2.553089 &           4.162126 &           2.271621 &           4.092826 \\
   4 &           2.843341 &           4.044754 &           2.414993 &           4.022131 \\
   5 &           2.999805 &           4.011540 &           2.463830 &           4.005278 \\
   6 &           3.088654 &           4.002915 &           2.477246 &           4.001282 \\
  \hline
  \end{tabular}%
  \end{table}%

  \subsection{Triangular N\'{e}d\'{e}lec element}
We solve the $\bb{curl}\t{curl}$ equation \eqref{e1} again on the two domains
\a{ \Omega=\left(0,1\right)^2, \qquad \t{ or } \  \left(0,1\right)^2\setminus\left\{1/2\right\}\times\left(0,1/2\right]. }
The exact solution of \eqref{e1} is chosen as
\an{\label{s2} \b u=\p{ x^2y^2 \\ x^2y}.  }
In both cases, the meshes used in the computation
    are uniform triangular meshes,  as shown in Figure \ref{g-tr}.
The results are listed in Table \ref{t21}, where we can see that the finite element
  solution converges at the optimal order in both norms on both domains.

\begin{figure}[ht!]
 \begin{center} \setlength\unitlength{1.1pt}
\begin{picture}(260,80)(0,0)
  \def\tr{\begin{picture}(20,20)(0,0)
    \put(0,0){\line(1,0){20}}\put(0,20){\line(1,0){20}}\put(0,20){\line(1,-1){20}}
          \put(0,0){\line(0,1){20}} \put(20,0){\line(0,1){20}}   \end{picture}}
 
  {\setlength\unitlength{2.2pt}
 \multiput(0,0)(20,0){2}{\multiput(0,0)(0,20){2}{\tr}}}

  \multiput(90,0)(20,0){4}{\multiput(0,0)(0,20){4}{\tr}}

  {\setlength\unitlength{0.55pt}
 \multiput(360,0)(20,0){8}{\multiput(0,0)(0,20){8}{\tr}}}

 \end{picture}\end{center}
\caption{The first three grids for computing Tables \ref{t21}--\ref{t22}.}
\label{g-tr}
\end{figure}
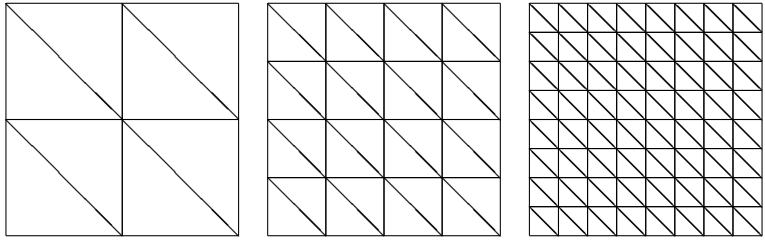

\begin{table}[ht]
    \centering   
    \caption{ Error profile for \eqref{s2}  on grids as shown in Figure \ref{g-tr}.
     }\label{t21} 
  \begin{tabular}{c|cc|cc|cc|cc}
  \hline
  Grid & Error 1 & Order   
        & Error 2 & Order & Error 1 & Order   
        & Error 2 & Order  \\
  \hline
   &\multicolumn{4}{c|}{On $\Omega=(0,1)^2$.}  &\multicolumn{4}{c}{On $\Omega=
   (0,1)^2\setminus\{\frac 12\}\times(0,\frac 12]$.} \\ \hline
   1&   1.17E-2 &  0.0&   7.19E-2 &  0.0 &   1.19E-2 &  0.0&   7.18E-2 &  0.0 \\
   2&   3.19E-3 &  1.9&   4.00E-2 &  0.8 &   3.29E-3 &  1.9&   4.00E-2 &  0.8 \\
   3&   8.12E-4 &  2.0&   2.05E-2 &  1.0 &   8.35E-4 &  2.0&   2.05E-2 &  1.0 \\
   4&   2.04E-4 &  2.0&   1.03E-2 &  1.0 &   2.10E-4 &  2.0&   1.03E-2 &  1.0 \\
   5&   5.10E-5 &  2.0&   5.15E-3 &  1.0 &   5.24E-5 &  2.0&   5.15E-3 &  1.0 \\
   6&   1.27E-5 &  2.0&   2.58E-3 &  1.0 &   1.31E-5 &  2.0&   2.58E-3 &  1.0 \\
   7&   3.19E-6 &  2.0&   1.29E-3 &  1.0 &   3.28E-6 &  2.0&   1.29E-3 &  1.0 \\
   8&   7.97E-7 &  2.0&   6.44E-4 &  1.0 &   8.19E-7 &  2.0&   6.44E-4 &  1.0 \\
  \hline
  \end{tabular}%
  \end{table}%

Again we do iterations based on domain decomposition methods with four subdomains for both domains $\Omega$, as shown in
    Figure \ref{d22}.   
We plot the nodes of the third-level finite element function inside each subdomain,
   in Figure \ref{d22}. 
The difference between the two graphs is at the nodes on the lower middle vertical edge,
   which is a boundary edge.

\begin{figure}[htb]\begin{center}  
\begin{picture}(320,160)(0,0) 
\put(0,-80){\includegraphics[width=180pt]{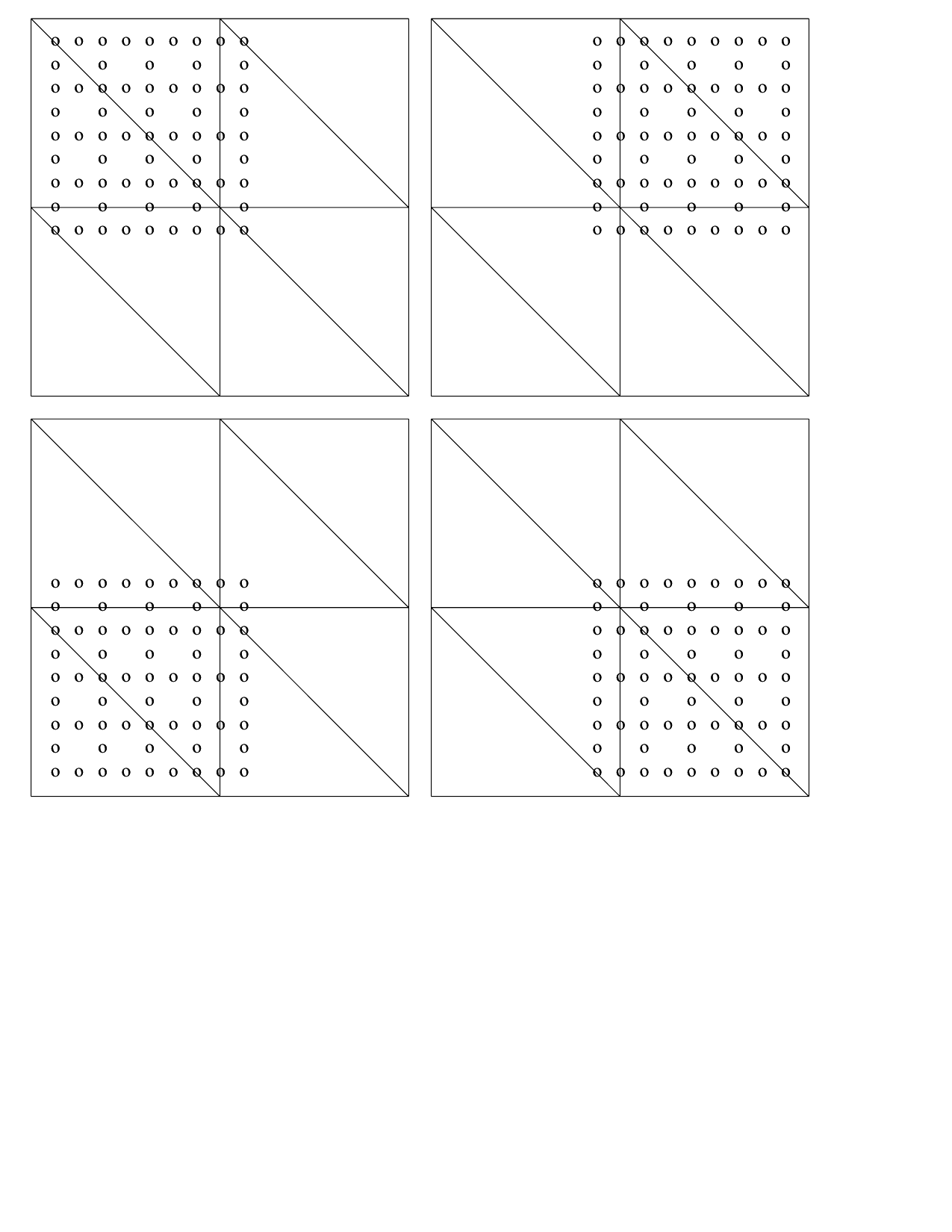}}
\put(160,-80){\includegraphics[width=180pt]{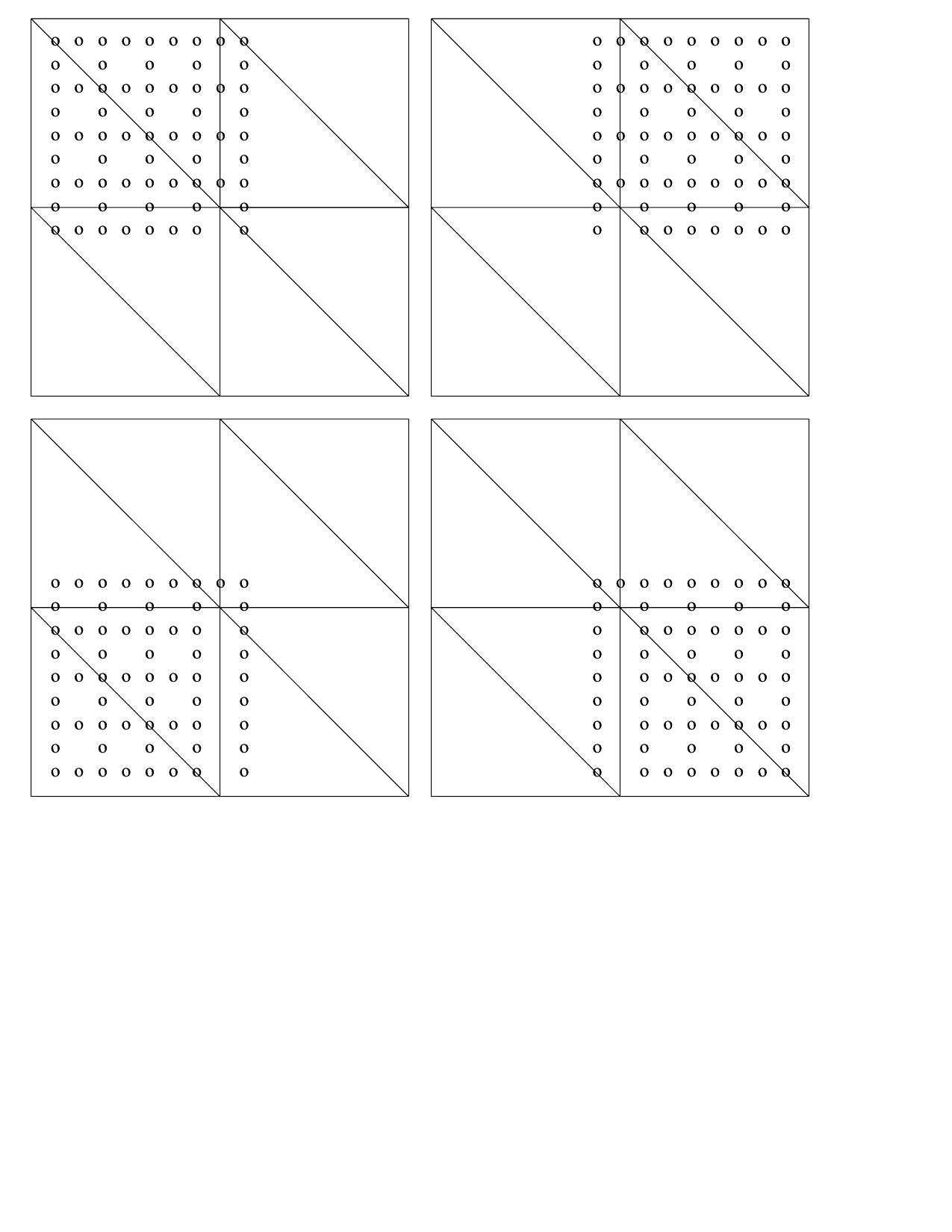}}
\end{picture}
\caption{Left: The level three function nodes in the four subdomains, where $\Omega=(0,1)^2$;
  Right: The 4-subdomain nodes for $\Omega=
   (0,1)^2\setminus\{\frac 12\}\times(0,\frac 12]$. }
\label{d22}
\end{center}
\end{figure}
 
We list the computer found constants of \eqref{b-d} in Table \ref{t22}.
As proved in the theory,  the constants remain bounded on the non-convex domain  $\Omega=
(0,1)^2\setminus\{\frac 12\}\times(0,\frac 12]$, in Table \ref{t22}.

\begin{table}[ht!]
    \centering   \renewcommand{\arraystretch}{1.1}
    \caption{The bounds for 4-subdomain small-overlap DD shown as in Figure \ref{d22}.
     }\label{t22} 
  \begin{tabular}{c|cc|cc}
  \hline
  Grid & $C_{\t{\tiny low}}$ in \eqref{b-d}&$C_{\t{\tiny high}}$ in \eqref{b-d} & $C_{\t{\tiny low}}$ in \eqref{b-d}&$C_{\t{\tiny high}}$ in \eqref{b-d} \\
  \hline
   &\multicolumn{2}{c|}{On $\Omega=(0,1)^2$.}  &\multicolumn{2}{c}{On $\Omega=
   (0,1)^2\setminus\{\frac 12\}\times(0,\frac 12]$.} \\ \hline  
   2 &           3.575004 &           5.000000 &           3.628239 &           5.000000 \\
   3 &           2.609161 &           4.614568 &           2.641507 &           4.611478 \\
   4 &           3.750690 &           4.192807 &           3.381828 &           4.191696 \\
   5 &           4.932503 &           4.049440 &           4.015695 &           4.049120 \\
   6 &           5.554555 &           4.012045 &           4.459404 &           4.011962 \\
  \hline
  \end{tabular}%
  \end{table}%

  \subsection{Tetrahedral N\'{e}d\'{e}lec element}
We solve the equation
\an{\label{e2} \ad{ \bb{curl} \, \bb{curl}\, \b u_h + \b u_h &= \b f \quad &&\t{in } \Omega, \\
                    \b u_h\times \b n &= g   \quad &&\t{on } \partial \Omega, } }
on the two $3D$ domains
\a{ \Omega=(0,2)^3, \qquad \t{ or } \quad \  (0,2)^3\setminus\{1\}\times[1,2)^2. }
The exact solution of \eqref{e2} is chosen as
\an{\label{s3} \b u=\p{ x^2  \\ x^2 \\ y^2 }.  }
In both cases, the meshes used in the computation
    are uniform tetrahedral meshes,  as shown in Figure \ref{g-3d}.
The results are listed in Table \ref{t31}, where we can see that the finite element
  solution converges at the optimal order in both norms on both domains.

\begin{figure}[ht]
\begin{center}
 \setlength\unitlength{1pt}
 \begin{picture}(220,122)(0,3)
    \put(0,0){\begin{picture}(110,110)(0,0)\put(0,102){Grid 1:}
       \multiput(0,0)(40,0){3}{\line(0,1){80}}  \multiput(0,0)(0,40){3}{\line(1,0){80}}
       \multiput(0,80)(40,0){3}{\line(1,1){20}} \multiput(0,80)(10,10){3}{\line(1,0){80}}
       \multiput(80,0)(0,40){3}{\line(1,1){20}}  \multiput(80,0)(10,10){3}{\line(0,1){80}}
 \put(80,0){\line(-1,1){80}}\put(80,0){\line(1,5){20}}\put(80,80){\line(-3,1){60}}
 \multiput(40,0)(40,40){2}{\line(-1,1){40}}  \multiput(80,40)(10,-30){2}{\line(1,5){10}}
 \multiput(40,80)(50,10){2}{\line(-3,1){30}}
      \end{picture}}
    \put(130,0){\begin{picture}(110,110)(0,0)\put(0,102){Grid 2:}
       \multiput(0,0)(20,0){5}{\line(0,1){80}}  \multiput(0,0)(0,20){5}{\line(1,0){80}}
       \multiput(0,80)(20,0){5}{\line(1,1){20}} \multiput(0,80)(5,5){5}{\line(1,0){80}}
       \multiput(80,0)(0,20){5}{\line(1,1){20}}  \multiput(80,0)(5,5){5}{\line(0,1){80}}
 \put(80,0){\line(-1,1){80}}\put(80,0){\line(1,5){20}}\put(80,80){\line(-3,1){60}}
 \multiput(40,0)(40,40){2}{\line(-1,1){40}}  \multiput(80,40)(10,-30){2}{\line(1,5){10}}
 \multiput(40,80)(50,10){2}{\line(-3,1){30}}
 \multiput(20,0)(60,60){2}{\line(-1,1){20}}   \multiput(60,0)(20,20){2}{\line(-1,1){60}}
  \multiput(80,60)(15,-45){2}{\line(1,5){5}} \multiput(80,20)(5,-15){2}{\line(1,5){15}}
  \multiput(20,80)(75,15){2}{\line(-3,1){15}}\multiput(60,80)(25,5){2}{\line(-3,1){45}}
      \end{picture}} 
    \end{picture}
    \end{center}
\caption{ The first two grids for the computation in Tables \ref{t31}--\ref{t32}.  }
\label{g-3d}
\end{figure}
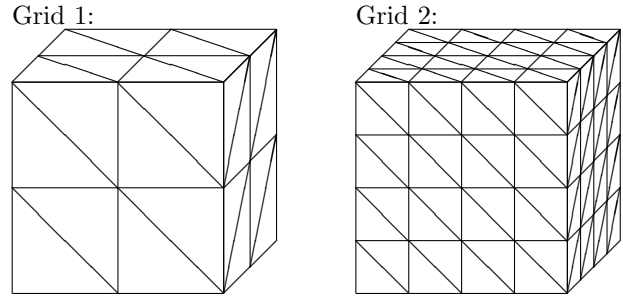

\begin{table}[ht]
    \centering   
    \caption{ Error profile for \eqref{s3}  on grids as shown in Figure \ref{g-3d}.
     }\label{t31} 
  \begin{tabular}{c|cc|cc|cc|cc}
  \hline
  Grid & Error 1 & Order   
        & Error 2 & Order & Error 1 & Order   
        & Error 2 & Order  \\
  \hline
   &\multicolumn{4}{c|}{On $\Omega=(0,2)^3$.} &\multicolumn{4}{c}{On $\Omega=
   (0,2)^3\setminus\{1\}\times[1,2)^2$.} \\ \hline
   1&    8.24E-2& 0.0&    4.57E-1& 0.0 &    8.53E-2& 0.0&    4.22E-1& 0.0 \\
   2&    2.82E-2& 1.5&    3.13E-1& 0.5 &    2.82E-2& 1.6&    3.04E-1& 0.5 \\
   3&    7.75E-3& 1.9&    1.75E-1& 0.8 &    7.75E-3& 1.9&    1.73E-1& 0.8 \\
   4&    2.01E-3& 1.9&    9.21E-2& 0.9 &    2.01E-3& 1.9&    9.16E-2& 0.9 \\
   5&    5.09E-4& 2.0&    4.71E-2& 1.0 &    5.10E-4& 2.0&    4.70E-2& 1.0 \\
   6&    1.28E-4& 2.0&    2.38E-2& 1.0 &    1.29E-4& 2.0&    2.38E-2& 1.0 \\
  \hline
  \end{tabular}%
  \end{table}%

We perform domain decomposition iterations with eight subdomains for both domains of a cube and a cube with
   a cut. The eight subdomains
   are the eight unit cubes in the left graph of Figure \ref{g-3d}.   
We list the computer found constants of \eqref{b-d} in Table \ref{t32}.
As proved in the theory,  the constants remain bounded on the non-convex domain  $\Omega=
    (0,2)^3\setminus\{1\}\times[1,2)^2$, in Table \ref{t32}.
It seems the $C_{\t{\tiny low}}$ in Table \ref{t32} may keep grow.
It would break the theory only when $C_{\t{\tiny low}}$ decreases to $0$.
We note again that we improved previous theoretic lower bound from $O((1+H/\delta)^2)$
   to $O(1+H/\delta)$.
The $C_{\t{\tiny low}}$ in the $2D$ examples seems to confirm that 
   $O(1+H/\delta)$ is the optimal lower bound.
But we are not sure if the computation is done on high enough levels to
   enter the asymptotic range, or if the lower bound $O(1+H/\delta)$ 
    can be further improved in theory for $3D$ tetrahedral edge elements.

\begin{table}[ht!]
    \centering   \renewcommand{\arraystretch}{1.1}
    \caption{The bounds for 8-subdomain DD on meshes shown as in Figure \ref{g-3d}.
     }\label{t32} 
  \begin{tabular}{c|cc|cc}
  \hline
  Grid & $C_{\t{\tiny low}}$ in \eqref{b-d}&$C_{\t{\tiny high}}$ in \eqref{b-d} & $C_{\t{\tiny low}}$ in \eqref{b-d}&$C_{\t{\tiny high}}$ in \eqref{b-d} \\
  \hline
   &\multicolumn{2}{c|}{On $\Omega=(0,2)^3$.}  &\multicolumn{2}{c}{On $\Omega=
   (0,2)^3\setminus\{1\}\times[1,2)^2$.} \\ \hline  
   1 &           1.525539 &           5.846509 &           1.662208 &           5.543800 \\
   2 &           1.792536 &           8.408068 &           1.815730 &           8.381018 \\
   3 &           2.845118 &           8.410624 &           2.856422 &           8.390277 \\
   4 &           4.741863 &           8.410624 &           4.760704 &           8.390277 \\
  \hline
  \end{tabular}%
  \end{table}%

\end{document}